\documentclass[10pt]{amsart}
\usepackage{amsfonts}
\usepackage{array}
\usepackage{tabularx}
\usepackage{arydshln}
\usepackage{amsmath}

\usepackage{amssymb}
\usepackage{physics}
\usepackage{graphicx}
\usepackage{caption}
\usepackage{subcaption}
\usepackage{multirow}
\usepackage{footmisc}
\providecommand{\U}[1]{\protect\rule{.1in}{.1in}}

\usepackage{xcolor}
\usepackage{latexsym}
\usepackage{amsmath}
\usepackage{amssymb}
\usepackage{mathrsfs}
\usepackage{graphicx}
\usepackage{color}
\usepackage{pgfpages}
\usepackage{ifthen}
\usepackage{leftidx,tensor}
\usepackage[T1]{fontenc}
\usepackage[latin1]{inputenc}
\usepackage{mathtools}
\usepackage{comment}
\usepackage{dsfont}
\usepackage[nocompress]{cite}
\usepackage[shortlabels]{enumitem}
\usepackage{aliascnt}
\usepackage[bookmarks=true,pdfstartview=FitH, pdfborder={0 0 0}, colorlinks=true,citecolor=red, linkcolor=blue]{hyperref}
\usepackage{nicefrac}
\newtheorem{thm}{Theorem}[section]
\newaliascnt{cor}{thm}
\newaliascnt{prop}{thm}
\newaliascnt{lem}{thm}
\newtheorem{cor}[cor]{Corollary}
\newtheorem{prop}[prop]{Proposition}
\newtheorem{lem}[lem]{Lemma}
\aliascntresetthe{cor}
\aliascntresetthe{prop}
\aliascntresetthe{lem}

\newaliascnt{defn}{thm}
\newaliascnt{asu}{thm}
\newaliascnt{con}{thm}

\aliascntresetthe{defn}
\aliascntresetthe{asu}
\aliascntresetthe{con}

\newcounter{stp}
\newcounter{stpi}
\newcounter{stpci}
\newcounter{stpiii}

\newaliascnt{rem}{thm}
\newaliascnt{exa}{thm}
\newaliascnt{masu}{thm}
\newaliascnt{nota}{thm}
\newaliascnt{sett}{thm}
\newtheorem{rem}[rem]{Remark}

\aliascntresetthe{rem}
\aliascntresetthe{exa}
\aliascntresetthe{masu}
\aliascntresetthe{nota}
\aliascntresetthe{sett}
\numberwithin{equation}{section}

\setlist[enumerate]{font = \normalfont}

\newcommand {\N}	{\mathbb{N}}

\newcommand {\R}	{\mathbb{R}}

\newcommand {\E}	{\mathbb{E}}

\newcommand {\T}	{\mathbb{T}}

\renewcommand{\d}{\, \mathrm{d}}

\renewcommand{\i}{\mathrm{i}}

\newcommand{\D}{\mathrm{D}}

\renewcommand{\H}{\mathrm{H}}
	\def\rot{{\rm curl} \,}

	\newcommand{\eps}{\varepsilon}
	\renewcommand{\phi}{\varphi}
	
	\renewcommand{\div}{\mathrm{div} \, }
	\newcommand{\rC}{\mathrm{C}}
	\newcommand{\rL}{\mathrm{L}}
	\newcommand{\rW}{\mathrm{W}}
	\newcommand{\rH}{\H}

	\renewcommand{\P}{\mathcal{P}}

       \newcommand{\cV}{\mathcal{V}}
         \newcommand{\cH}{\mathcal{H}}
     \newcommand{\rI}{\mathrm{I}}

\title[Continuous data assimilation for semilinear parabolic equations with noisy observations]{Continuous Data Assimilation for Semilinear Parabolic Equations with Multiplicative Observation Noise}

\author{Jochen Br\"ocker}
\address{University of Reading\\
School of Mathematical, Physical, and Computational Sciences\\
and Centre for the Mathematics of Planet Earth\\
	Whiteknights, PO Box 220\\
	Reading RG6 6AX\\
	United Kingdom}
\email{j.broecker@reading.ac.uk}

\author{Gianmarco Del Sarto}
\address{Technische Universit\"{a}t Darmstadt\\
Fachbereich Mathematik\\
	Schlossgartenstr.\ 7\\
	64289 Darmstadt\\
	Germany}
\email{delsarto@mathematik.tu-darmstadt.de}

\author{Matthias Hieber}
\address{Technische Universit\"at Darmstadt\\
	Fachbereich Mathematik\\
	Schlossgartenstr.\ 7\\
	64289 Darmstadt\\
	Germany}
\email{hieber@mathematik.tu-darmstadt.de}

\author{Filippo Palma}
\address{Universit\`a degli Studi della Campania L. Vanvitelli\\
	Dipartimento di Matematica e Fisica\\
	Via Vivaldi 43\\
	81100 Caserta\\
	Italy}
\email{filippo.palma@unicampania.it}

\author{Tarek Z\"{o}chling}
\address{Technische Universit\"at Darmstadt\\
	Fachbereich Mathematik\\
	Schlossgartenstr.\ 7\\
	64289 Darmstadt\\
	Germany}
\email{zoechling@mathematik.tu-darmstadt.de}

\begin{document}
\keywords{Continuous data assimilation; noisy observations; multiplicative noise; semilinear parabolic equations; stochastic partial differential equations}

\subjclass[2020]{Primary 35K58; Secondary 60H15, 93C20, 35B40, 35B41, 37L30}
\begin{abstract}
The problem of continuous data assimilation for semilinear parabolic equations based on partial observations corrupted by noise is investigated. The noise is allowed to be \emph{multiplicative}, with additive noise arising as a special case. In a general Gelfand triple framework, an abstract theory for the nudging equation is developed that covers both weak and strong formulations. Mean square convergence of the assimilation error is proved under suitable assumptions, and, under additional integrability conditions on the noise, a uniform almost sure convergence result is established. Finally, the framework is applied to several PDE models, including the 2D Navier-Stokes, 2D magnetohydrodynamics, 2D quasi-geostrophic, and 1D Allen-Cahn equations.
\end{abstract}
\maketitle

%\tableofcontents

\section{Introduction}
\noindent 
Continuous data assimilation aims at reconstructing the state of an evolution equation from incomplete observations by combining the available measurements with the model dynamics through a suitable feedback mechanism \cite{Kalnay2003AtmosphericDA,ReichCotter2015PFDA,Evensen}. It is a central ingredient in modern forecasting, especially in numerical weather prediction, where data assimilation is a major component of the forecasting system and one of its most computationally demanding parts; see, for instance, \cite{Rabier2003,Lean2021}. In continuous time, this is often achieved by \emph{nudging}: the reconstructed trajectory is evolved by an identical copy of the underlying evolution equation, but continuously corrected by feeding into the equation the discrepancy between the observed coarse-scale data and the corresponding coarse-scale prediction produced by the reconstructed trajectory \cite{AOT2014,Stuart2015}.

Our mathematical setup is broadly as follows, see \autoref{sec: main results} for details. Let $(\cV,\cH,\cV^\ast)$ be a Gelfand triple, let $A\colon \cV\to \cV^\ast$ be a bounded linear operator, and let $u_0\in \cH$. We consider, for $T>0$, the semilinear parabolic evolution equation
\begin{equation}\label{eq:model}
    \left\{
    \begin{aligned}
        u'+Au&=F(u) , \quad t \in (0,T),\\
        u(0)&=u_0 \in \cH.
    \end{aligned}
    \right.
\end{equation}
Here $u$ is regarded as the unknown reference, or true, trajectory. Since the initial condition $u_0$ is not available to the observer, the full trajectory $u$ is itself unknown. The problem of data assimilation is then to use only partial observations of this trajectory to construct a second trajectory that synchronises with it.

We assume that only coarse-scale observations of $u$ are available through an interpolation/observation operator $\rI_\delta$, where $\delta>0$ denotes the observation scale. The operator $\rI_\delta$ will approximate the identity as $\delta \to 0$, in a sense to be made precise later. A central feature of the present paper is that these observations are noisy, and the noise intensity may depend on the reference trajectory itself. More precisely, we assume that the observation process $(y_t)_{t\ge 0}$ satisfies 
\begin{equation}\label{eq:obs-process-u}
    \d y_t=\rI_\delta u(t) \d t + G_{\delta}(u(t)) \d W_t^Q .
\end{equation}
Here, $(W_t^Q)_t$ denotes a $Q$-Wiener process on $\cH$. 
In other words, we consider \emph{multiplicative noise} in the observation process, with the additive-noise case recovered when the noise coefficient $G_{\delta}$ is independent of the solution.
This setting of multiplicative noise is physically relevant, since realistic observations are affected not only by instrument and representativeness errors, but often also by state-dependent or flow-dependent uncertainties, see, e.g., \cite{Geer2011,Carrassi2018,Bishop2019}.
Furthermore, we assume the noise coefficient $G_{\delta}$ to be dependent on the observation scale $\delta$.
Following the discussion in~\cite{Bessaih2015}, our motivation for this is that in typical applications, the ``raw'' observations delivered by the measurement apparatus are of the form
\begin{equation}\label{eq:obs-raw}
    \d o_t = M_\delta u(t) \d t + \sigma_{\delta}(u(t)) \d B_t,
\end{equation}
where $M_\delta : \cH \to \R^N$ is a finite rank operator, $B$ is a standard $N$--dimensional Wiener process, and $\sigma_{\delta}: \cH \to \R^{N \times N}$ for each $\delta > 0$, with $N$ typically increasing as $\delta$ decreases.
The interpolation of those ``raw'' observations is what gives rise to $\{y_t\}_{t \geq 0}$ in~\eqref{eq:obs-process-u} by applying an interpolation operator $L_\delta : \R^N \to \cH$ to~\eqref{eq:obs-raw}.
This gives $\rI_\delta = L_\delta M_\delta$ and $G_{\delta} = L_\delta \sigma_{\delta}$.
We also stress that even if $\sigma_{\delta}$ does not depend on $u$, it should be generally permitted to depend on $\delta$.
Typically, the standard $N$--dimensional Wiener process in~\eqref{eq:obs-raw} needs to be rescaled with $\delta$ as otherwise a trivial limit $G_{\delta} \to 0$ for $\delta \to 0$ might occur.
Starting from these observations, we introduce a reconstructed trajectory $v$, initialized from an arbitrary datum $v_0\in\cH$, and evolve it according to the same model dynamics as $u$ (i.e.\ equation~\eqref{eq:model}), but with an additional correction term. This is the \emph{nudging term}, namely the feedback of the error
$$
\rI_\delta u(t)-\rI_\delta v_t,
$$
which measures the discrepancy between the observed and predicted coarse-scale data. The resulting stochastic data-assimilation system is 
\begin{equation}\label{eq:model-data-stoch-u}\tag{\textcolor{blue}{DA$_{\mathrm{sto}}$}}
    \left\{
    \begin{aligned}
        \d v_t + Av_t\d t
        &= F(v_t)\d t - \mu\bigl(\rI_\delta v_t-\rI_\delta u(t)\bigr)\d t + \mu\, G_{\delta}(u(t))\d W_t^Q,
        \qquad t\in (0,T),\\
        v(0)&=v_0\in \cH.
    \end{aligned}
    \right.
\end{equation}
where $\mu>0$ is the nudging parameter. Larger values of $\mu$ correspond to a stronger feedback, whereas $\delta$ determines the amount of information carried by the observations.

Our aim is to investigate the data-assimilation error
$$
w:=u-v.
$$
Our first group of main results gives quantitative mean-square estimates for the assimilation error. Under suitable structural assumptions on the drift, the observation operator, and the noise coefficient, we prove that the error decays exponentially fast in mean square up to a stochastic residual term, provided $\mu$ is large enough to stabilise the data-assimilation system~\eqref{eq:model-data-stoch-u}, while $\mu\delta^2\lesssim 1$, where $\delta$ measures the size of $\rI-\rI_\delta$ as an operator from $\cH$ to $\cV^\ast$. Thus, in the presence of non-vanishing observation noise, synchronisation holds only up to a noise-dependent remainder. If the noise intensity is uniformly bounded along the reference trajectory, this yields an explicit \emph{noise floor}, namely an upper bound on the asymptotic mean-square error. We also obtain the corresponding estimate in the $\cV^\ast$-norm. Finally, when the deterministic dynamics admits a compact global attractor and the noise coefficient is continuous near it, the asymptotic error can be controlled only in terms of the values of the noise on the attractor, rather than along the full reference trajectory.

Our final main result concerns almost sure synchronisation. Under additional integrability assumptions on the stochastic forcing along the reference trajectory, we prove that the residual stochastic contribution becomes asymptotically negligible and that the convergence can be upgraded to $\mathbb{P}$-almost sure convergence, uniformly on the tail. In particular,
$$
    \sup_{t\geq N} \| u(t) - v_t\|_{\cH}\to 0 \qquad \mathbb{P} \text{-a.s. as } N \to \infty .
$$
This attractor-based viewpoint is especially useful for long-time dynamics: if the noise vanishes on the attractor and is locally Lipschitz near it, then the additional integrability condition is naturally satisfied under a suitable rate of attraction. To the best of our knowledge, this uniform almost sure synchronisation result is essentially new in the literature on stochastic continuous data assimilation with multiplicative noise. The only related result we are aware of is the recent work \cite{Bessaih2025}, where an almost sure convergence result is obtained for the two-dimensional stochastic Navier-Stokes equations (NSEs), with multiplicative noise in the dynamics rather than in the observations.

Another feature of the paper is that the analysis is carried out in a flexible abstract setting, which allows us to treat both weak and strong formulations within the same framework. In the weak setting, we apply the theory to the two-dimensional NSEs, the two-dimensional magnetohydrodynamics equations, the two-dimensional quasi-geostrophic equations, and the one-dimensional Allen--Cahn equation. We also revisit the two-dimensional NSEs and the one-dimensional Allen--Cahn equation in a stronger functional setting, obtaining convergence results in stronger norms.

Let us briefly place our results in the existing literature. Our abstract framework is closely connected with our recent work on deterministic continuous data assimilation for semilinear parabolic equations \cite{CDA}. More broadly, deterministic data assimilation has been studied for a variety of models, including the 2D B\'enard convection problem \cite{FarhatJollyTiti2015Benard}, the 3D Navier-Stokes-$\alpha$ model \cite{Titialpha}, the 3D Navier-Stokes equations \cite{MR4344886}, reaction-diffusion equations \cite{Larios}, and the two-dimensional Cahn-Hilliard-Navier-Stokes system \cite{MR4409797}. In the stochastic or noisy-data setting, classical and more recent contributions, both abstract and computational, include \cite{Bessaih2015,Blomker2013,Hammoud2022,Bessaih2025,WangSIAM}. In particular, \cite{Bessaih2015} treats the two-dimensional NSEs with additive noise in the observations, while \cite{Bessaih2025} studies continuous data assimilation for the two-dimensional stochastic NSEs with multiplicative noise in the dynamics, but no noise in the observations. Compared with these works, the present paper treats multiplicative noise in the observations and develops an abstract variational theory for a broad class of semilinear parabolic equations, yielding synchronisation in mean square and, under suitable assumptions, also almost surely.

The paper is organised as follows. In \autoref{sec: main results} we introduce the abstract framework and state the main results. \autoref{sec: examples} illustrates the theory on several concrete PDE models in both weak and strong settings. In \autoref{sec: preparations} we establish the preparatory well-posedness results for both the reference and the assimilated systems. \autoref{sec: proofs main results} is devoted to the proofs of the abstract convergence theorems. \autoref{sec:appendix} contains an auxiliary local well-posedness result for non-autonomous semilinear equations, which is used in the proof of the stochastic data-assimilation well-posedness result.

\section{Assumptions and main results} \label{sec: main results}
\noindent 
In this section we state the structural assumptions on the deterministic equation \eqref{eq:model}, on the observation operator $\rI_\delta$, and on the noise coefficient $G_{\delta}$, and then present our main convergence results for the data-assimilation error.

A \emph{Gelfand triple} is a triple of real Hilbert spaces
$$
\cV \hookrightarrow \cH \hookrightarrow   \cV^\ast,
$$
with dense and continuous embeddings, where $\cH$ is identified with its dual and
$$
\langle u, v\rangle_{\cV^\ast,\cV}=(u,v)_\cH,
\qquad u\in \cH, v\in \cV.
$$
For a compatible pair of Banach spaces $(X_0,X_1)$, we denote by $(X_0,X_1)_{\theta,p}$ and $[X_0,X_1]_\theta$ the real and complex interpolation spaces, respectively. \par Let $\D\subseteq\R^d$, with $d\in\N$, be a sufficiently regular domain. For $m\in\mathbb N$ and $q \in (1,\infty)$, we denote by $\rL^q(\D)$ and $\rH^{m,q}(\D)=\rW^{m,q}(\D)$ the Lebesgue and Sobolev spaces, and by $\|\cdot\|_q$ and $\|\cdot\|_{m,q}$ their norms. As is usual in the literature, in the case $q=2$ we set $\rW^{m,2}(\D)=\rH^m(\D)$ and we denote by $\rH^m_0(\D)$ the subset of $\rH^m(\D)$ of functions with zero trace. We recall that, see \cite{Sol}, the space $\rL^q(\D)$, $q\in(1,\infty)$, admits the decomposition
\[
\rL^q(\D)=\rL^q_\sigma(\D)\oplus G^q(\D)\,,
\]
where $\rL^q_\sigma(\D)$ is the subset of functions in $\rL^q(\D)$ that are weakly divergence-free and that have zero generalized trace, while $G^q(\D)$ is the subset of functions $v$ in $\rL^q(\D)$ such that $v=\nabla h$, $h\in \rW^{1,q}_{loc}(\D)$. We denote by $\P_q$ the Helmholtz projection
\[
\P_q:\rL^q(\D)\to \rL^q_\sigma(\D)\,.
\]
In the case $q=2$, we set $\P_2=\P$. For Banach spaces $X$ and $Y$, we denote by $\mathcal{L}(X,Y)$ the space of bounded linear operators from $X$ to $Y$, and with $X'$ the dual space of $X$. For more details on interpolation and function spaces we refer, for instance, to \cite{AdamsFournier,Lunardi2018}.

We work on a complete filtered probability space $\left(\Omega, \mathcal{F}, \left( \mathcal{F}_t \right)_t , \mathbb{P}\right)$. A stochastic process $\Phi$, taking values in a measurable space, is adapted if $\Phi_t$ is $\mathcal{F}_t$-measurable for any $t \geq 0$. It is progressively measurable if the map $(s, \omega) \mapsto \Phi_s(\omega)$ is measurable on $([0,t] \times \Omega, \mathcal{B}([0,t]) \otimes \mathcal{F}_t  )$ for every $t \geq 0$, with $\mathcal{B}([0,t])$ being the Borel $\sigma$-algebra on $[0,t].$ Given a real, separable, Hilbert space $\cH$, we denote by $(W_t^Q)_t$ a $Q$-Wiener process on $\cH$, where $Q$ is a non-negative, self-adjoint, trace-class operator on $\cH$. If $(e_k)_k \subset \cH$ denotes an orthonormal basis of $\cH$ made of eigenvectors for $Q$, i.e. $Q e_k = \lambda_k^2 e_k$, then
\[
W_t^Q = \sum_{k = 0}^\infty  \lambda_k e_k \beta_k(t), \qquad t \geq 0,
\]
where $(\beta_k(t))_k$ are independent Brownian motions on $(\Omega, \mathcal{F},\left( \mathcal{F}_t\right)_t, \mathbb{P})$, and $(\lambda_k)_k$ are the non-negative square roots of the eigenvalues. We also denote by $\rL_2(U,H)$ the space of Hilbert-Schmidt operators between Hilbert spaces $U$ and $H$. For further details, we refer to \cite{DPZ}.

\subsection{The setting}
\label{subsec: assumptions}
Let $(\cV,\cH,\cV^\ast)$ be a \emph{Gelfand triple} of real Hilbert spaces such that the pairing between $\cV$ and $\cV^\ast$ satisfies
$$
\langle u, v\rangle_{\cV^\ast,\cV}=(u,v)_\cH,
\qquad u\in \cH, v\in \cV.
$$
We further assume that for the real interpolation space it holds
\[
(\cV^\ast, \cV)_{\frac{1}{2},2}= \cH.
\]
For $\beta \in (\frac12,1)$, we set
\[
\cV_\beta=[\cV^*,\cV]_\beta\,.
\]
Our first group of assumptions is purely deterministic. 
\begin{description}
    \item[(A1)]
    $A \in \mathcal{L} (\cV, \cV^\ast)$ is coercive, i.e., for all $u\in \cV$ it holds that
    \begin{equation*}
        \langle Au,u \rangle_{\cV^\ast,\cV} \geq \alpha \|u\|^2_\cV
        \qquad \text{for some } \alpha>0.
    \end{equation*}

    \item[(A2)]
The nonlinearity can be written as
\[
    F=\sum_{j=1}^m F_j ,
\]
where, for each $j=1,\dots,m$, there exist
$\beta_j\in(\frac12,1)$, $\rho_j\ge0$, and $C_j>0$ such that
\begin{equation}\label{eq:A2_Fj_bound}
    \|F_j(u)-F_j(v)\|_{\cV^\ast}
    \le
    C_j
    \bigl(1+\|u\|_{\cV_{\beta_j}}^{\rho_j}
            +\|v\|_{\cV_{\beta_j}}^{\rho_j}\bigr)
    \|u-v\|_{\cV_{\beta_j}},
    \qquad u,v\in\cV .
\end{equation}
Moreover,
\begin{equation}\label{eq:A3_coefficients_bound}
    (2\beta_j-1)(\rho_j+1)\le 1,
    \qquad j=1,\dots,m .
\end{equation}
\end{description}
Assumption $\mathbf{(A1)}$ gives the coercivity of the linear part, while $\mathbf{(A2)}$ controls the nonlinear term in a critical interpolation scale. Together, these two assumptions guarantee the local well-posedness of \eqref{eq:model}.
\begin{rem}\label{rem: bilinear}{\rm
Several models in fluid mechanics, such as the Navier-Stokes equations, have a bilinear nonlinearity $
F(u)=B(u,u),$ with a bounded $ B:\cV_\beta\times\cV_\beta\to\cV^\ast $. Then
$
\|F(u)\|_{\cV^\ast}\le C\|u\|_{\cV_\beta}^2,
$
and ${\bf (A2)}$ holds with $m=1$, $\rho_1=1$ and $\beta_1 = \beta \in ( \frac{1}{2}, \frac{3}{4}]$.}
\end{rem}
\begin{rem}
\label{rem:A-stoch-max-reg} \noindent
    \begin{enumerate}
        \item[(i)] Note that, by general theory, assumption \textup{(A1)} implies that $-A$ generates an analytic semigroup on $\cV^\ast$. 
        \item[(ii)] Since $(\cV^\ast,\cV)_{\frac{1}{2},2}=\cH$, the operator $A$ enjoys $\rL^2$-stochastic maximal regularity, see \cite[Theorem 3.13]{Agresti_Veraar_survey}.
        \end{enumerate}
\end{rem}
\noindent
 To exclude finite-time blow-up and to obtain well-posedness of the data-assimilation system, we impose the following structural assumption on $F$.

\begin{description}
\item[(A3)]
For each $j=1,\dots,m$, one of the following alternatives holds.

\begin{itemize}
    \item[(i)]
    If $\frac34\le\beta_j<1$, then there exist
    $\eps_j>0$ and $C_j^{(0)},C_j^{(1)}\ge0$ such that
    \begin{equation}\label{eq:A3diag}
        \langle F_j(x),x\rangle_{\cV^\ast,\cV}
        \le
        \eps_j\|x\|_{\cV}^2
        + C_j^{(1)}\|x\|_{\cH}^2
        + C_j^{(0)},
        \qquad x\in\cV .
    \end{equation}

    \item[(ii)]
    If $\frac12<\beta_j<\frac34$, then there exist
    $\eps_j>0$, $C_j^{(0)},C_j^{(1)}\ge0$, and a measurable map
    $\Psi_j:\cV\to[0,\infty)$ such that
    \begin{equation}\label{eq:A3shift}
        \langle F_j(x+y),x\rangle_{\cV^\ast,\cV}
        \le
        \eps_j\|x\|_{\cV}^2
        + C_j^{(0)}\bigl(1+\Psi_j(y)\bigr)\|x\|_{\cH}^2
        + C_j^{(1)}\Psi_j(y),
        \qquad x,y\in\cV .
    \end{equation}
    Moreover, for every $t>0$ and every
    $z\in\rC([0,t];\cH)\cap\rL^2(0,t;\cV)$, one has
    $\Psi_j(z(\cdot))\in\rL^1(0,t)$.
\end{itemize}
Furthermore, the constants $\eps_j$ satisfy $
    \sum_{j=1}^m \eps_j < \frac{\alpha}{4}.$
\end{description}
\noindent
We now fix $u_0\in\cH$ and denote by $u$ the corresponding solution of
\eqref{eq:model}, whose existence and uniqueness in
$$\rL^2(0,T;\cV) \cap \rH^1(0,T;\cV^\ast) \cap\mathrm{BUC}([0,T];\cH)
$$
follow
from \textup{\bf(A1)}--\textup{\bf(A3)}, see \autoref{sec: preparations}. We
call $u$ the \emph{reference trajectory}. We impose the following natural integrability condition on the noise coefficient $G_{\delta}$.
\begin{description}
    \item[(N1)]
    Assume $\cH$ separable. Let $\cH_0 := Q^{1/2} \cH $ and $\rL_2^0 := \rL_2(\cH_0, \cH)$. 
    There exists $\delta_0>0$ such that, for every $0<\delta\leq\delta_0$
and every reference trajectory $u$, 
    \begin{equation}\label{eq:N1}
        \int_0^T \|G_{\delta}(u(t))\|_{\mathrm{L}_2^0}^2\d t <\infty, \qquad T >0.
    \end{equation}
\end{description}
Note that we do {\em not} impose bounds on $G_{\delta}$ uniform in $\delta$.

The next assumption is an additional
one-sided stability condition on the increments of $F$ along $u$, used to
control the nonlinear term in the error equation. Here $\alpha$ is the coercivity constant from \textup{\bf(A1)}.

\begin{description}
    \item[(A4)]
    There exist $\eps_1 \in (0,\frac{\alpha}{4})$, constants $M_0,M_1\ge 0$, and a non-negative measurable function
    \[
    \kappa_u \colon [0,\infty)\to [0,\infty)
    \]
    such that
    \begin{equation}\label{eq:A4}
        \langle F(u(t)) - F(u(t)-\xi),\xi\rangle_{\cV^\ast,\cV}
        \leq \eps_1 \|\xi\|_{\cV}^2 + \kappa_u(t)\|\xi\|_{\cH}^2,
        \qquad \forall\, \xi\in \cV,
    \end{equation}
    for a.e.\ $t\ge 0$, and
    \begin{equation}\label{eq:A4avg}
        \int_s^t \kappa_u(r)\d r \le M_0 (t-s) + M_1,
        \qquad \forall\, 0\le s\le t.
    \end{equation}
\end{description}
\noindent
Lastly, the observation operator $\rI_\delta$ is assumed to satisfy
\begin{equation}\label{eq:Idelta-bound}
    \langle f-\rI_\delta f,g\rangle_{\cV^\ast,\cV}
    \leq C_{\rI} \delta \|f\|_{\cH}\|g\|_{\cV}
    \quad \text{ for all $f \in \cH$, $g\in \cV$, and $0< \delta \leq \delta_0$},
\end{equation}
for a constant $C_\rI>0$.
\begin{rem}\label{rem:combined-R}
{\rm
A natural structural condition which simultaneously implies the shifted well-posedness assumption \textup{\bf(A3)}-(ii) and the convergence assumption \textup{\bf(A4)} is the following.

\begin{description}
    \item[(R)]
    There exist $\eps_R\in (0,\frac{\alpha}{4})$, a constant $C_R\ge 0$, and a measurable map
    $
    \Psi \colon \cV \to [0,\infty)$ such that
    \begin{equation*}%\label{eq:R}
        \langle F(x+y)-F(x),y\rangle_{\cV^\ast,\cV}
        \le
        \eps_R \|y\|_{\cV}^2
        + C_R\bigl(1+\Psi(x)\bigr)\|y\|_{\cH}^2,
        \qquad \forall\, x,y\in \cV.
    \end{equation*}
    Moreover, for every $T>0$ and every $z \in \rC ([0,T]; \cH) \cap \rL^2(0,T; \cV)$, one has $ \Psi (z(\cdot)) \in \rL^1(0,T).$  Finally, along the reference trajectory $u$, there exist constants $M_0,M_1 \geq 0$ such that 
    \begin{equation}
        \int_s^t \Psi( u(r))\d r \le M_0(t-s)+M_1,
        \label{eq:Ravg-u}
    \end{equation}
    for any $0 \leq s \leq t.$
\end{description}
}
\end{rem}
\smallskip
\noindent
Condition \textup{\bf(R)} gives a practical way of verifying the two
one-sided estimates used in the analysis. First, \textup{\bf(R)} yields \textup{\bf(A4)}. Taking
$x=u(t)$ and $y=-\xi$ in \textup{\bf(R)} gives
\[
    \langle F(u(t))-F(u(t)-\xi),\xi\rangle_{\cV^\ast,\cV}
    \le
    \eps_R\|\xi\|_{\cV}^2
    + C_R\bigl(1+\Psi(u(t))\bigr)\|\xi\|_{\cH}^2 .
\]
Hence \textup{\bf(A4)} holds with
$\kappa_u(t)=C_R(1+\Psi(u(t)))$, and \eqref{eq:A4avg} follows from
\eqref{eq:Ravg-u}. Second, \textup{\bf(R)} also gives the shifted estimate needed in
\textup{\bf(A3)}-(ii). Writing
\[
    \langle F(x+y),x\rangle_{\cV^\ast,\cV}
    =
    \langle F(x+y)-F(y),x\rangle_{\cV^\ast,\cV}
    +
    \langle F(y),x\rangle_{\cV^\ast,\cV},
\]
the first term is controlled by \textup{\bf(R)}
\[
    \langle F(x+y)-F(y),x\rangle_{\cV^\ast,\cV}
    \le
    \eps_R\|x\|_{\cV}^2
    + C_R\bigl(1+\Psi(y)\bigr)\|x\|_{\cH}^2 .
\]
The second term is controlled by \textup{\bf(A2)} and Young's inequality
\[
    |\langle F(y),x\rangle_{\cV^\ast,\cV}|
    \le
    \varepsilon \|x\|_{\cV}^2
    +
    C_\varepsilon
    \Bigl(
        1+\sum_{j=1}^m
        \|y\|_{\cV_{\beta_j}}^{2(\rho_j+1)}
    \Bigr).
\]
The interpolation restriction $
    (2\beta_j-1)(\rho_j+1)\le1$ ensures that the last quantity is integrable whenever
$y\in\rC([0,T];\cH)\cap\rL^2(0,T;\cV)$. Therefore, after replacing $\Psi$ by
\[
    \widetilde\Psi(y)
    :=
    \Psi(y)
    +
    1
    +
    \sum_{j=1}^m
    \|y\|_{\cV_{\beta_j}}^{2(\rho_j+1)},
\]
condition \textup{\bf(R)} implies the shifted one-sided estimate required in
\textup{\bf(A3)}-(ii), as well as the stability assumption \textup{\bf(A4)}. In the componentwise formulation of \textup{\bf(A3)}, the same argument can be
applied to each $F_j$, with constants $\eps_{R,j}>0$ chosen so that $ \sum_{j=1}^m \eps_{R,j}<\frac{\alpha}{4}.$

\subsection{Main results}
\noindent
We now introduce the assimilation error
\[
w:=u-v,
\]
which measures the discrepancy between the reference solution $u$ and the nudged stochastic approximation $v$. The error equation for $w$ is obtained by subtracting \eqref{eq:model-data-stoch-u} from \eqref{eq:model}, 
\begin{equation}\label{eq:model-difference-stoch}
    \left\{
    \begin{aligned}
        \d w_t + Aw_t\d t
        &= \bigl(F(u(t))-F(v_t)\bigr)\d t
        - \mu \, \rI_\delta w_t\d t
        - \mu G_{\delta}(u(t))\d W_t^Q,
        \qquad t\in (0,T),\\
        w(0)&=w_0:=u_0-v_0.
    \end{aligned}
    \right.
\end{equation}
\noindent
\noindent
The next theorem gives the basic mean-square estimate for the assimilation
error. For sufficiently large $\mu$ and sufficiently small $\delta$, with
$\mu\delta^2$ bounded, the deterministic part of the error is exponentially damped, while the remaining term is driven by the observational noise.
\begin{thm}[Mean-square convergence up to the stochastic forcing]
\label{thm:stoch-H}
Let $u_0,v_0\in \cH$, and suppose that $\bf(A1)$--$\bf(A4)$ and $\bf(N1)$ are satisfied. Assume moreover that $\rI_\delta$ satisfies \eqref{eq:Idelta-bound}, and let
$\delta_0>0$ be chosen so that both \textup{\bf(N1)} and
\eqref{eq:Idelta-bound} hold for every $\delta\leq \delta_0$. Then there exist constants $\mu_0,\eta_0>0$ such that for every $\mu\ge \mu_0$ and every $\delta>0$ satisfying
\[
\mu\delta^2\le \eta_0,
\qquad \text{and} \qquad \delta \leq \delta_0
\]
the unique solution of \eqref{eq:model-difference-stoch} satisfies
\begin{equation}\label{eq:main-stoch-est}
    \E\|w_t\|_{\cH}^2
    \leq
    C e^{-\gamma t}\|w_0\|_{\cH}^2
    + C\mu^2 \int_0^t e^{-\gamma (t-s)} \|G_{\delta}(u(s))\|_{\mathrm{L}_2^0}^2\d s,
    \qquad t\geq 0,
\end{equation}
for some positive constants $\gamma= \gamma (\mu, \delta)$ and $C$ independent of $t$.
\end{thm}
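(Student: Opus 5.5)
We apply Itô's formula to $\|w_t\|_{\cH}^2$ in the Gelfand triple and then close the estimate with a stochastic Gronwall argument with an integrable rate. Since $w\in \rL^2(0,T;\cV)\cap \rC([0,T];\cH)$ almost surely (from the well-posedness results of \autoref{sec: preparations}), the Itô formula for variational solutions (Krylov--Rozovskii) yields
\begin{equation*}
\d\|w_t\|_{\cH}^2 = \Bigl(-2\langle Aw_t,w_t\rangle + 2\langle F(u)-F(v_t),w_t\rangle - 2\mu(\rI_\delta w_t,w_t)_{\cH} + \mu^2\|G_\delta(u)\|_{\rL_2^0}^2\Bigr)\d t - 2\mu (w_t, G_\delta(u)\d W^Q_t)_{\cH}.
\end{equation*}
We treat the drift terms one at a time. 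Coercivity \textbf{(A1)} gives $-2\alpha\|w\|_{\cV}^2$. Since $v=u-w$, assumption \textbf{(A4)} with $\xi=w_t$ bounds the nonlinear term by $2\eps_1\|w\|_{\cV}^2+2\kappa_u(t)\|w\|_{\cH}^2$. For the nudging term we write $-\mu(\rI_\delta w,w)=-\mu\|w\|_{\cH}^2+\mu\langle w-\rI_\delta w,w\rangle$ and apply \eqref{eq:Idelta-bound} and Young's inequality:
\[
\mu\langle w-\rI_\delta w,w\rangle\le \mu C_\rI\delta\|w\|_{\cH}\|w\|_{\cV}\le \tfrac{\mu}{2}\|w\|_{\cH}^2+\tfrac{\mu C_\rI^2\delta^2}{2}\|w\|_{\cV}^2.
\]
We choose $\eta_0$ so that $\mu C_\rI^2\delta^2\le \eta_0 C_\rI^2\le \alpha/2$. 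Then, since $\eps_1<\alpha/4$, the $\cV$-norm terms have total coefficient at most $-2\alpha+\alpha/2+\alpha/2<0$ and can be dropped. What remains is
\[
\d\|w_t\|_{\cH}^2\le \bigl(-\mu+2\kappa_u(t)\bigr)\|w_t\|_{\cH}^2\d t+\mu^2\|G_\delta(u(t))\|_{\rL_2^0}^2\d t+\d M_t,
\]
where $M$ denotes the local martingale.

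Next we take expectations, which requires removing the martingale. We would stop at $\tau_n=\inf\{t:\|w_t\|_{\cH}\ge n\}$. Then $M_{\cdot\wedge\tau_n}$ is a true martingale, because its quadratic variation is bounded by $4\mu^2 n^2\int_0^T\|G_\delta(u)\|_{\rL_2^0}^2\d s<\infty$ by \textbf{(N1)}. Since $\kappa_u$ is deterministic (it depends only on the reference trajectory), we use the integrating factor $\Phi(s,t)=\exp\bigl(-\int_s^t(\mu-2\kappa_u(r))\d r\bigr)$ and apply Itô's product rule to $\Phi(0,t)^{-1}\|w_t\|_{\cH}^2$. Taking expectations at $t\wedge\tau_n$ and letting $n\to\infty$ via Fatou's lemma gives
\[
\E\|w_t\|_{\cH}^2\le \Phi(0,t)\|w_0\|_{\cH}^2+\mu^2\int_0^t\Phi(s,t)\|G_\delta(u(s))\|_{\rL_2^0}^2\d s.
\]
(An alternative is to take expectations first and use Gronwall on $t\mapsto\E\|w_t\|^2$, but the stopped form above handles the localisation cleanly.) Finally, \eqref{eq:A4avg} gives $\Phi(s,t)\le e^{2M_1}e^{-(\mu-2M_0)(t-s)}$. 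Choosing $\mu_0>2M_0$ and setting $\gamma=\mu-2M_0>0$, $C=e^{2M_1}$ yields \eqref{eq:main-stoch-est}.

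We expect the main obstacle to be justifying the Itô formula and the localisation, that is, confirming that $w$ has the regularity that \autoref{sec: preparations} provides for $v$ (progressive measurability, $\rL^2(0,T;\cV)$ paths, $\cH$-continuity) so that the variational Itô formula applies. We also need $\int_0^t\Phi(s,t)^{-1}$-type weights to be handled with $\kappa_u$ only locally integrable; this is fine because $\kappa_u\in \rL^1_{loc}$ follows from \eqref{eq:A4avg}. The compatibility of constants is routine: $\eta_0$ depends only on $\alpha$ and $C_\rI$, while $\mu_0$ depends only on $M_0$.
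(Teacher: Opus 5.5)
Your proposal is correct and follows essentially the same route as the paper: It\^{o}'s formula for $\|w_t\|_{\cH}^2$, \textbf{(A1)} and \textbf{(A4)} for the linear and nonlinear terms, \eqref{eq:Idelta-bound} together with Young's inequality for the nudging term, and a variation-of-constants bound controlled by \eqref{eq:A4avg}. The differences are minor refinements. First, you absorb the $\delta$-contribution into the $\cV$-norm, so $\gamma=\mu-2M_0$ does not depend on $\delta$, whereas the paper puts $C\mu^2\delta^2$ into the $\cH$-rate and gets $\gamma=2\mu-C\mu^2\delta^2-2M_0$. Second, your stopping-time, integrating-factor and Fatou localisation justifies removing the martingale term more carefully than the paper, which simply asserts that it has zero expectation.
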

\noindent
Estimate \eqref{eq:main-stoch-est} yields exponential decay of the initial
error, up to a contribution from the observational noise. If the noise is
uniformly bounded along the reference trajectory, this contribution remains
uniformly bounded in time and the error decays exponentially towards a
noise floor.
\begin{cor}[Noise floor]\label{cor:noise-floor}
Under the assumptions of \autoref{thm:stoch-H}, assume in addition that
\begin{equation}\label{eq:uniform-noise-bound}
    \Gamma_u:=\sup_{t\geq 0}\|G_{\delta}(u(t))\|_{\mathrm{L}_2^0}^2<\infty.
\end{equation}
Then, for all $t \geq 0$,
\begin{equation}\label{eq:noise-floor-est}
    \E\|w_t\|_{\cH}^2
    \leq
    C e^{-\gamma t}\|w_0\|_{\cH}^2
    + \frac{C\mu^2}{\gamma } \Gamma_u,
\end{equation}
where $\gamma,C>0$ are the constants from \autoref{thm:stoch-H}. In particular,
\begin{equation}\label{eq:noise-floor-limsup}
    \limsup_{t\to\infty}\E\|w_t\|_{\cH}^2 \leq \frac{C\mu^2}{\gamma} \Gamma_u.
\end{equation}
\end{cor}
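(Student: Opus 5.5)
The corollary is a direct consequence of \autoref{thm:stoch-H}, so the plan is to insert the uniform bound \eqref{eq:uniform-noise-bound} into the stochastic remainder of \eqref{eq:main-stoch-est} and integrate the exponential kernel explicitly. Fix $\mu\ge\mu_0$ and $\delta\le\delta_0$ with $\mu\delta^2\le\eta_0$, and let $\gamma=\gamma(\mu,\delta)>0$ and $C$ be the constants from \autoref{thm:stoch-H}. Note that \eqref{eq:uniform-noise-bound} in particular implies \textup{\bf(N1)} along $u$, so the theorem applies without change.

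For every $s\ge0$ we have $\|G_{\delta}(u(s))\|_{\mathrm{L}_2^0}^2\le\Gamma_u$. Using this inside the integral in \eqref{eq:main-stoch-est} gives
\[
C\mu^2\int_0^t e^{-\gamma(t-s)}\|G_{\delta}(u(s))\|_{\mathrm{L}_2^0}^2\d s
\le C\mu^2\Gamma_u\int_0^t e^{-\gamma(t-s)}\d s
= \frac{C\mu^2\Gamma_u}{\gamma}\bigl(1-e^{-\gamma t}\bigr)
\le \frac{C\mu^2}{\gamma}\Gamma_u .
\]
Combining this with the first term of \eqref{eq:main-stoch-est} yields \eqref{eq:noise-floor-est} for all $t\ge0$, with the same constants $\gamma$ and $C$.

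For \eqref{eq:noise-floor-limsup}, take $\limsup_{t\to\infty}$ in \eqref{eq:noise-floor-est}. Since $\gamma>0$ and $\|w_0\|_{\cH}$ is a fixed deterministic quantity (as $u_0,v_0\in\cH$), the term $Ce^{-\gamma t}\|w_0\|_{\cH}^2$ tends to zero, which gives the claim. There is no genuine obstacle here. The only point to check is that $C$ and $\gamma$ do not depend on $t$, which is stated explicitly in \autoref{thm:stoch-H}, so the bound holds uniformly in time.
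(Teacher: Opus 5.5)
Your proposal is correct and follows essentially the same route as the paper: bound $\|G_{\delta}(u(s))\|_{\mathrm{L}_2^0}^2$ by $\Gamma_u$ inside the integral of \eqref{eq:main-stoch-est}, use $\int_0^t e^{-\gamma(t-s)}\d s\le\gamma^{-1}$, and then let $t\to\infty$. Your side remark that \eqref{eq:uniform-noise-bound} implies \textup{\bf(N1)} is harmless, since \textup{\bf(N1)} is already among the assumptions of \autoref{thm:stoch-H}.
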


\begin{cor}[Convergence in the $\cV^\ast$-norm]\label{cor:stoch-Vstar}
Under the assumptions of \autoref{thm:stoch-H}, the unique solution of \eqref{eq:model-difference-stoch} satisfies
\begin{equation}\label{eq:stoch-vstar-est}
    \E\|w_t\|_{\cV^\ast}^2
    \leq
    C e^{-\gamma t}\|w_0\|_{\cH}^2
    + C\mu^2 \int_0^t e^{-\gamma (t-s)} \|G_{\delta}(u(s))\|_{\mathrm{L}_2^0}^2\d s,
    \qquad t\geq 0,
\end{equation}
where $\gamma,C>0$ are the constants from \autoref{thm:stoch-H}. In particular, if \eqref{eq:uniform-noise-bound} holds, then
\[
\limsup_{t\to\infty}\E\|w_t\|_{\cV^\ast}^2 \leq \frac{C\mu^2}{\gamma} \Gamma_u.
\]
\end{cor}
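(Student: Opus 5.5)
The $\cV^\ast$-estimate will follow directly from \autoref{thm:stoch-H} via the continuous embedding $\cH\hookrightarrow\cV^\ast$; no new energy estimate is needed. Since the embedding is continuous, there is a constant $c_{\ast}>0$ with $\|x\|_{\cV^\ast}\le c_\ast\|x\|_{\cH}$ for all $x\in\cH$. The solution $w$ of \eqref{eq:model-difference-stoch} takes values in $\cH$ for every $t\ge 0$ (indeed it has continuous paths in $\cH$ by the well-posedness results of \autoref{sec: preparations}). Hence, pointwise in $\omega$ and $t$,
\[
\|w_t\|_{\cV^\ast}^2\le c_\ast^2\|w_t\|_{\cH}^2 .
\]

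Next we take expectations and insert \eqref{eq:main-stoch-est}:
\[
\E\|w_t\|_{\cV^\ast}^2\le c_\ast^2\Bigl(C e^{-\gamma t}\|w_0\|_{\cH}^2 + C\mu^2\int_0^t e^{-\gamma(t-s)}\|G_\delta(u(s))\|_{\rL_2^0}^2\d s\Bigr).
\]
This is \eqref{eq:stoch-vstar-est} with constant $c_\ast^2C$. The decay rate $\gamma$ is unchanged, and the new constant is still independent of $t$, $\mu$ and $\delta$, because $c_\ast$ depends only on the Gelfand triple. If one wants literally the same $C$ as in \autoref{thm:stoch-H}, we simply enlarge $C$ there to $\max\{1,c_\ast^2\}C$; this is harmless, since that theorem only asserts existence of some such constant.

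For the asymptotic statement we assume \eqref{eq:uniform-noise-bound}. Then the integral term is bounded by
\[
\Gamma_u\int_0^t e^{-\gamma(t-s)}\d s\le \frac{\Gamma_u}{\gamma}.
\]
This is exactly the computation in \autoref{cor:noise-floor}. Letting $t\to\infty$ kills the term $e^{-\gamma t}\|w_0\|_{\cH}^2$, and the limsup bound follows.

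There is no genuine obstacle. The only point to check is that $w_t\in\cH$ almost surely, so that the embedding inequality applies pointwise. This is guaranteed by the solution class established for the data-assimilation system.
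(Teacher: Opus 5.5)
Your proposal is correct and matches the paper's proof: bound $\|w_t\|_{\cV^\ast}\le c_\ast\|w_t\|_{\cH}$ using the continuous embedding $\cH\hookrightarrow\cV^\ast$, take expectations, invoke \autoref{thm:stoch-H}, and get the $\limsup$ from the computation in \autoref{cor:noise-floor}. Your remark on absorbing $c_\ast^2$ into $C$ makes explicit a constant-bookkeeping point that the paper leaves implicit.
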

\noindent
The previous bound still depends on the full trajectory $u$ through the quantity $\Gamma_u$. When the deterministic dynamics admit a compact global attractor and the noise coefficient is continuous near that attractor, the asymptotic error can be estimated only in terms of the values of $G_{\delta}$ on the attractor itself.
\begin{cor}[Attractor-dependent noise floor]
Under the assumptions of \autoref{thm:stoch-H}, assume in addition that the
deterministic semiflow generated by \eqref{eq:model} possesses a compact global
attractor $\mathcal{A}\subset\cH$, and that there exists an open neighborhood
$\mathcal U$ of $\mathcal A$ such that
$ G_{\delta}|_{\mathcal{U}}:\mathcal{U}\to \rL_2^0 $ is continuous. Then
$
\Gamma_{\mathcal{A}}:=\sup_{a\in \mathcal{A}} \| G_{\delta}(a) \|_{\rL_2^0}^2<\infty,
$
and
$$
\limsup_{t\to\infty}\mathbb{E}\|w_t\|_{\cH}^2 \leq \frac{C \mu^2}{\gamma} \Gamma_{\mathcal{A}},
$$
where $\gamma,C>0$ are the constants from \autoref{thm:stoch-H}.
\label{cor: attractor noise floor}
\end{cor}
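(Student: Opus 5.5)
The plan is to combine the integral bound \eqref{eq:main-stoch-est} of \autoref{thm:stoch-H} with the attraction property of $\mathcal A$ for the deterministic reference trajectory $u$. We proceed in three steps: finiteness of $\Gamma_{\mathcal A}$, a tail bound for the noise along $u$, and a limsup argument in \eqref{eq:main-stoch-est}.

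\emph{Step 1: $\Gamma_{\mathcal A}<\infty$.} The set $\mathcal A$ is compact and contained in $\mathcal U$, and the map $a\mapsto\|G_\delta(a)\|_{\rL_2^0}^2$ is continuous on $\mathcal U$. Hence this map attains a finite maximum on $\mathcal A$.

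\emph{Step 2: for every $\varepsilon>0$ there is $t_\varepsilon$ with $\|G_\delta(u(t))\|_{\rL_2^0}^2\le\Gamma_{\mathcal A}+\varepsilon$ for all $t\ge t_\varepsilon$.}

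\begin{enumerate}
\item[(a)] \emph{Uniform continuity on a neighbourhood.} Cover the compact set $\mathcal A$ by finitely many balls on which $G_\delta$ oscillates by less than $\varepsilon$. A Lebesgue number argument then gives $r>0$ such that $\mathcal N_r(\mathcal A)\subset\mathcal U$ and the following holds: for every $x\in\mathcal N_r(\mathcal A)$ there is $a\in\mathcal A$ with
\[
\bigl|\|G_\delta(x)\|_{\rL_2^0}^2-\|G_\delta(a)\|_{\rL_2^0}^2\bigr|<\varepsilon .
\]
\item[(b)] \emph{Entering the neighbourhood.} By definition of the global attractor, $\mathcal A$ attracts the bounded set $\{u_0\}$, so $\mathrm{dist}_{\cH}(u(t),\mathcal A)\to0$. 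Choose $t_\varepsilon$ with $u(t)\in\mathcal N_r(\mathcal A)$ for all $t\ge t_\varepsilon$; the claim follows from (a).
\end{enumerate}

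The main care point is (a): continuity of $G_\delta$ only on $\mathcal U$ does not directly give uniform control near $\mathcal A$. This is exactly where compactness of $\mathcal A$ enters, through the finite cover and the Lebesgue number.

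\emph{Step 3: limsup in \eqref{eq:main-stoch-est}.} For $t\ge t_\varepsilon$, split the integral in \eqref{eq:main-stoch-est} at $t_\varepsilon$.
\begin{enumerate}
\item[(a)] \emph{Early part.} It equals
\[
e^{-\gamma(t-t_\varepsilon)}\int_0^{t_\varepsilon}e^{-\gamma(t_\varepsilon-s)}\|G_\delta(u(s))\|_{\rL_2^0}^2\d s .
\]
The integral is finite by \textup{\bf(N1)}, so this term tends to $0$ as $t\to\infty$.
\item[(b)] \emph{Late part.} By Step 2 it is bounded by
\[
(\Gamma_{\mathcal A}+\varepsilon)\int_{t_\varepsilon}^t e^{-\gamma(t-s)}\d s\le\frac{\Gamma_{\mathcal A}+\varepsilon}{\gamma}.
\]
\item[(c)] \emph{Initial error.} The term $Ce^{-\gamma t}\|w_0\|_{\cH}^2$ also tends to $0$.
\end{enumerate}
Together these give
\[
\limsup_{t\to\infty}\E\|w_t\|_{\cH}^2\le\frac{C\mu^2}{\gamma}(\Gamma_{\mathcal A}+\varepsilon).
\]
Letting $\varepsilon\downarrow0$ yields the claim, with the same constants $C$ and $\gamma$ as in \autoref{thm:stoch-H}.
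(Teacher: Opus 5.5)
Your proposal is correct and follows essentially the same route as the paper. Both arguments get $\Gamma_{\mathcal A}<\infty$ from compactness and continuity, use the fact that $u(t)$ eventually satisfies $\|G_\delta(u(t))\|_{\rL_2^0}^2\le\Gamma_{\mathcal A}+\varepsilon$, split the integral in \eqref{eq:main-stoch-est} at that time, and let $\varepsilon\downarrow0$. Your Step 2(a) goes further than the paper by spelling out the compactness argument, namely uniform control of $G_\delta$ on a neighbourhood $\mathcal N_r(\mathcal A)\subset\mathcal U$, which the paper passes over with a single ``therefore''.
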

\noindent
Finally, under an additional integrability assumption on the stochastic forcing along the reference trajectory, the mean-square convergence can be upgraded to uniform almost sure convergence. To the best of our knowledge, this is new for stochastic continuous data assimilation with multiplicative noise, except for recent results on data assimilation for the 2D NSEs in a related but different setting \cite{Bessaih2025}.
\begin{thm}[Almost sure convergence in $\cH$]
\label{thm:as-convergence-multiplicative}
Assume the hypotheses of \autoref{thm:stoch-H}, and assume
\begin{equation}
\int_0^\infty \|G_{\delta}( u(t))\|_{\rL_2^0}^2\d t<\infty.
\label{eq:extra-integrability-Gutilde}
\end{equation}
Then
\begin{equation}
    \sup_{t \geq N} \| w_t \|_{\cH} \to 0 \qquad \mathbb{P}\text{- a.s. for } N \to \infty.
    \label{eq: uniform convergence of the tail}
\end{equation}
In particular $
\| w_t\|_{\cH}\to 0 $ $\mathbb{P}$ -a.s. for $t\to\infty$.
\end{thm}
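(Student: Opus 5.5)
We apply Itô's formula to $e^{\gamma t}\|w_t\|_{\cH}^2$ and estimate pathwise, rather than only taking expectations. This yields a nonnegative supermartingale-type inequality, which we then combine with Doob's maximal inequality on the tail. The same computation that gives \eqref{eq:main-stoch-est} in \autoref{thm:stoch-H} applies. Using \textup{\bf(A1)}, \textup{\bf(A4)} with $\xi=w_t$, and \eqref{eq:Idelta-bound} with $\mu\delta^2\le\eta_0$, $\mu\ge\mu_0$, we expect
\[
\d\|w_t\|_{\cH}^2 \le \bigl(-2\lambda(t)\|w_t\|_{\cH}^2 + \mu^2\|G_{\delta}(u(t))\|_{\rL_2^0}^2\bigr)\d t - 2\mu\bigl(w_t, G_{\delta}(u(t))\d W^Q_t\bigr)_{\cH}.
\]
Here $\lambda(t)=c\mu-\kappa_u(t)$. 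By \eqref{eq:A4avg}, the integrating factor $\Lambda(s,t)=\exp(-2\int_s^t\lambda)$ satisfies $\Lambda(s,t)\le e^{2M_1}e^{-\gamma(t-s)}$.

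For the tail estimate, fix $N$ and work on $[N,\infty)$. Set $X_t:=\|w_t\|_{\cH}^2$ and $g(t):=\mu^2\|G_{\delta}(u(t))\|_{\rL_2^0}^2$. We drop the negative drift and keep the martingale, which gives for $t\ge N$
\[
X_t\le X_N+\int_N^t g(s)\d s + 2\sup_{N\le r\le t}\Bigl|\int_N^r \bigl(w_s,\mu G_{\delta}(u(s))\d W^Q_s\bigr)\Bigr|.
\]
A cruder bound without the damping seems sufficient here; the damping will only be used to make $X_N$ small. By the Burkholder–Davis–Gundy inequality, the expected supremum of the martingale part is bounded by
\[
C\E\Bigl(\int_N^\infty X_s\, g(s)\d s\Bigr)^{1/2}\le \tfrac14\E\sup_{s\ge N}X_s + C\int_N^\infty g(s)\d s.
\]
Absorbing the first term (after localization by stopping times, to ensure finiteness) gives
\[
\E\sup_{t\ge N}X_t\le 2\E X_N + C\int_N^\infty g(s)\d s.
\]

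Next we show both terms vanish as $N\to\infty$. The second is a tail of the convergent integral \eqref{eq:extra-integrability-Gutilde}. For the first, \eqref{eq:main-stoch-est} gives
\[
\E X_N\le Ce^{-\gamma N}\|w_0\|_{\cH}^2 + C\int_0^N e^{-\gamma(N-s)}g(s)\d s.
\]
The convolution of an $\rL^1$ function with $e^{-\gamma\cdot}$ tends to zero: split the integral at $N/2$ and use dominated convergence on each piece. Hence $\E\sup_{t\ge N}X_t\to0$. Since $\sup_{t\ge N}X_t$ is nonincreasing in $N$, its a.s. limit $Y$ exists, and Fatou's lemma gives $\E Y=0$. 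Therefore $\sup_{t\ge N}\|w_t\|_{\cH}\to0$ a.s., which is \eqref{eq: uniform convergence of the tail}.

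The main obstacle is the rigorous justification of the Itô formula in the Gelfand triple and of the BDG absorption. This requires a priori knowledge that $\E\sup_{t\in[N,T]}X_t<\infty$, which we would obtain from the well-posedness theory of \autoref{sec: preparations} together with stopping times $\tau_R=\inf\{t:X_t>R\}$. We then let $R\to\infty$ and $T\to\infty$ by monotone convergence, noting that all constants are independent of $T$.
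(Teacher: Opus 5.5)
Your pathwise inequality on $[N,\infty)$ depends on dropping the drift, and that drift is not sign-definite. In your own notation it equals $-2\lambda(t)X_t=2\bigl(\kappa_u(t)-c\mu\bigr)X_t$, which is nonpositive only where $\kappa_u(t)\le c\mu$. Assumption \textbf{(A4)} controls $\kappa_u$ only in time average, via \eqref{eq:A4avg}. The threshold $\mu_0$ of \autoref{thm:stoch-H} makes $\mu$ dominate $M_0$ (so that $\gamma>0$), not $\sup_{t\ge N}\kappa_u(t)$, and the hypotheses give no bound on that supremum. Hence $X_t\le X_N+\int_N^t g\,\d s+2\sup_{N\le r\le t}|\cdots|$ is unjustified. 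If you keep the drift and bound it by $2\kappa_u X$, pathwise Gronwall produces the factor $\exp\bigl(2\int_N^t\kappa_u\,\d s\bigr)$, which for $M_0>0$ may grow like $e^{2M_0(t-N)}$. The damping $-2c\mu X$ compensates this only in an averaged sense, so a single BDG-and-absorb step over the whole unbounded tail does not close.

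The paper avoids this by working on unit intervals $[n,n+1]$. There the positive part of the drift coefficient, $\bigl(2\kappa_u-2c\mu\bigr)_+\le 2\kappa_u$, has integral at most $2(M_0+M_1)$ uniformly in $n$ by \eqref{eq:A4avg}. Gronwall, BDG and absorption then give $\E S_n\le C\bigl(\E X_n+\int_n^{n+1}g\,\d s\bigr)$, with $S_n:=\sup_{t\in[n,n+1]}X_t$ and $C$ independent of $n$.

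With this repair, your observation $\E X_N\to0$ is no longer enough. To pass from blocks to the whole tail you need $\sum_n\E X_n<\infty$, whether you go through $\E\sup_{t\ge N}X_t\le\sum_{n\ge N}\E S_n$ or through Markov and Borel--Cantelli as the paper does. This summability does follow from \eqref{eq:main-stoch-est}, since $\sum_{n\ge0}\int_0^n e^{-\gamma(n-s)}g(s)\,\d s\le(1-e^{-\gamma})^{-1}\|g\|_{\rL^1(0,\infty)}$.

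Your one-shot argument can be rescued in the special case $M_0=0$, as in the NSE, QG and MHD examples. There $\int_N^\infty\kappa_u\,\d s\le M_1$, so bounding the drift by $2\kappa_uX$ instead of dropping it and applying Gronwall on $[N,\infty)$ costs only a factor $e^{2M_1}$, after which your BDG step and monotone-limit argument go through. The theorem, however, allows $M_0>0$.
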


\begin{rem}[Two sufficient conditions for \eqref{eq:extra-integrability-Gutilde}]
Assume the hypotheses of \autoref{thm:stoch-H}. Two natural situations in which condition \eqref{eq:extra-integrability-Gutilde} is satisfied are the following.
\begin{enumerate}
    \item[(a)] Assume that
    $$
    u \in \mathrm{BUC}([0,\infty);\cH) \cap \rL^2(0,\infty;\cH),
    $$
    and that
    $$
    G_{\delta}: \cH \to \rL_2^0
    \qquad\text{is locally Lipschitz-continuous with } G_{\delta}(0)=0.
    $$
    Since the trajectory $u([0,\infty))$ is bounded in $\cH$, the local Lipschitz property of $G_{\delta}$ yields
    $$
    \|G_{\delta}(u(t))\|_{\rL_2^0} \lesssim \|u(t)\|_{\cH},
    \qquad t \geq 0.
    $$
    Hence \eqref{eq:extra-integrability-Gutilde} follows from $u\in \rL^2(0,\infty;\cH)$.

    \item[(b)] Assume that the deterministic semiflow generated by \eqref{eq:model} possesses a compact global attractor $\mathcal{A} \subset \cH$, and that there exist $r>0$ and $L_r>0$ such that $G_{\delta}|_{\mathcal{A}}=0$ and
    $$
    \|G_{\delta}(x)-G_{\delta}(y)\|_{\rL_2^0}\le L_r\|x-y\|_{\cH}
    $$
    whenever $
    \mathrm{dist}_{\cH}(x,\mathcal{A})\leq r$ and $ \mathrm{dist}_{\cH}(y,\mathcal A) \leq r. $
    If moreover
    \begin{equation}
        \int_0^\infty \mathrm{dist}_{\cH}(u(t),\mathcal{A})^2\d t<\infty,
        \label{eq: fast enough conv A}
    \end{equation}
    then \eqref{eq:extra-integrability-Gutilde} holds. Indeed, for all sufficiently large $t$, the trajectory $u(t)$ lies in the $r$-neighborhood of $\mathcal{A}$, and therefore
    $$
    \|G_{\delta}(u(t))\|_{\rL_2^0}\lesssim \mathrm{dist}_{\cH}(u(t),\mathcal{A}).
    $$
    The desired integrability in \eqref{eq:extra-integrability-Gutilde} then follows from \eqref{eq: fast enough conv A}.
\end{enumerate}
\end{rem}

\section{Applications to PDE models}
\label{sec: examples}
\noindent
In this section we illustrate the abstract theory on several PDE models. 
% A key advantage of our Gelfand triple framework is its flexibility: by choosing the triple appropriately, the same approach applies both in weak and in strong settings.

\subsection{Weak formulations}

In this subsection we show that the abstract framework covers several models in their weak formulation. We consider the 2D Navier-Stokes equations, the 2D magnetohydrodynamics (MHD) equations, the 2D quasi-geostrophic equations, and the 1D Allen-Cahn equation. For each model, we verify assumptions \textbf{(A1)}-\textbf{(A4)} and deduce the corresponding mean-square and almost sure convergence results for the associated data-assimilation problem from \autoref{thm:stoch-H} and \autoref{thm:as-convergence-multiplicative}.

\subsubsection{2D Navier-Stokes equations} \mbox{} \\ 
Consider the variational formulation of the Navier-Stokes initial boundary value problem on a bounded domain $\D\subset \R^2$ with a sufficiently smooth boundary $\partial\D$. For this purpose, set
$$
\cV := \rH^1_0 (\D; \mathbb{R}^2) \cap \rL^2_\sigma (\D; \mathbb{R}^2), \qquad \cH := \rL^2_\sigma (\D; \mathbb{R}^2), \qquad \cV^\ast:=\rH^{-1}_\sigma(\D;\mathbb R^2)
:=\cV' ,
$$
where $\cV'$ denotes the dual space of $\cV$. Define the weak Stokes operator $A_w \in \mathcal{L}(\cV, \cV^\ast)$ by
$$
\langle A_w u, \phi \rangle_{\cV^\ast , \cV} := ( \nabla u, \nabla \phi )_2 .
$$
 Defining the bilinear map $B$ by
\begin{equation*}
    \langle B (u,v),\phi \rangle_{\cV^\ast, \cV} := (u\otimes v , \nabla \phi)_2\,,
\end{equation*}
the variational formulation of the Navier-Stokes initial boundary value problem reads as
\begin{equation}\label{eq: variational Stokes}\tag{weak-2D-NSE}
    \left\{
    \begin{aligned}
       u'+A_w u&=B(u,u) , \quad t \in (0,T) ,\\
        u(0)&=u_0.
    \end{aligned}
    \right.
\end{equation}
To verify $\bf(A1)$, we calculate for all $u \in \cV$  in view of Poincar\'e's inequality 
\[
\langle A_w u, u \rangle_{\cV^*,\cV}=\|\nabla u\|_2^2 \geq \alpha \| u \|_{\rH^1}^2 \ \text{ for some }\ \alpha >0.
\]
Concerning $\bf(A2)$, by H\"older's inequality and the Sobolev embedding $\rH^{\frac{1}{2}}(\D)\hookrightarrow \rL^4(\D)$, we obtain
$$
|\langle B(u,v),\phi\rangle_{\cV^*,\cV}|
\leq C\|u\|_4\|v\|_4\|\nabla\phi\|_2
\leq C\|u\|_{\rH^{\frac12}}\|v\|_{\rH^{\frac 12}}\|\phi\|_{\rH^1}.
$$
Hence, $B$ is a bounded bilinear map from $\cV_{\frac34}\times \cV_{\frac34}$ into $\cV^\ast = \rH^{-1}_\sigma (\D)$, since $\cV_{\frac{3}{4}} \hookrightarrow \rH^{\frac{1}{2}}(\D)$. Therefore, \autoref{rem: bilinear} ensures that $\bf(A2)$ holds with $m=1$, $\rho_1=1$, and $\beta_1=\frac34$.

\noindent
Next, we verify \textbf{(A3)}-(i). Setting $
F_w(u):=B(u,u),$ we observe that for every $\phi\in\cV$, an application of the divergence theorem yields
\[
\langle F_w(\phi),\phi\rangle_{\cV^\ast,\cV}
=(\phi\otimes \phi,\nabla \phi)_2
=0.
\]

We finally verify $\bf(A4)$. Let $u$ be the unique global solution to \eqref{eq: variational Stokes}, whose existence is ensured by  $\bf(A1)$-$\bf (A3)$ (see \autoref{prop: global-finite}), and let $\xi\in \cV$. The bilinearity of $B$ gives
$$
B(u,u) - B(u- \xi, u- \xi) = B(\xi, u) + B(u, \xi) - B(\xi, \xi).
$$
Pairing with $\xi$ and using that $u$ and $\xi$ are divergence-free, with zero
trace, it is possible to check that
\[
    \langle B(\xi,u),\xi\rangle_{\cV^\ast,\cV}
    =
    \langle B(\xi,\xi),\xi\rangle_{\cV^\ast,\cV}=0.
\]
Consequently,
\[
    \langle B(u,u)-B(u-\xi,u-\xi),\xi\rangle_{\cV^\ast,\cV}
    =
    (u(t)\otimes \xi,\nabla\xi)_2 .
\]
By H\"older's inequality, Ladyzhenskaya's inequality, and Young's inequality,
for every $\varepsilon_1,\varepsilon_2>0$,
\begin{equation*}
    \begin{split}
        \vert ( u(t) \otimes \xi , \nabla \xi )_2 \vert &\leq \| \nabla \xi \|_2 \|u(t) \|_4 \| \xi \|_4  \leq \eps_1 \| \nabla \xi \|_2^2 + C_{\eps_1} \| u(t) \|_4^2 \| \xi \|_4^2  \\
        &\leq \eps_1 \| \nabla \xi \|_2^2 + C_{\eps_1} \| u(t) \|_2 \| \nabla u (t) \|_2 \| \xi \|_2 \| \nabla \xi \|_2  \\
        & \leq \eps_1 \| \nabla \xi \|_2^2 + \eps_2 \| \nabla \xi \|_2^2 + C_{\eps_1, \eps_2} \| u(t) \|_2^2 \| \nabla u(t) \|_2^2 \| \xi \|_2^2.
    \end{split}
\end{equation*}
Thus, recalling that $F_w(u) = B(u,u),$ condition \eqref{eq:A4} in \textup{\bf (A4)} takes the form
$$
\langle F_w(u(t))- F_w(u(t) - \xi), \xi  \rangle _{\cV^\ast, \cV} \leq (\eps_1 + \eps_2) \| \nabla \xi \|_2^2 + C_{\eps_1, \eps_2} \kappa_u(t) \| \xi \|_2^2,
$$
with $\kappa_u (t) := \| u(t) \|_2^2 \| \nabla u (t) \|_2^2$. Moreover, by the strong energy inequality, see \cite{Prodi},
$$
\frac{1}{2}\| u (\tau_2) \|_2^2 + \int_{\tau_1}^{\tau_2} \| \nabla u (r) \|_2^2 \d r \leq \frac{1}{2}\| u (\tau_1) \|_2^2,
$$
for $0 \leq \tau_1 \leq \tau_2.$ Applying it and H\"older's inequality to estimate $\kappa_u$, we obtain
$$
\int_s^t \kappa_u (r) \d r \leq \sup_{r \leq t} \| u (r) \|_2^2 \int_s^t \| \nabla u (r) \|_2^2 \d r \lesssim  \| u_0 \|_2^2 \| u(s) \|_2^2 \leq \| u_0 \|_2^4,
$$
for $0 \leq s \leq t$. Hence, \eqref{eq:A4avg} also holds, and \textup{\bf (A4)} is verified.

Assuming also \textup{\bf(N1)} and \eqref{eq:Idelta-bound}, we denote by $v$ the unique solution to the stochastic data assimilation problem associated with \eqref{eq: variational Stokes}, see \autoref{sec: preparations}. We consider the difference $\psi=u-v$, which solves
\begin{equation} \label{eq:difference-Stokes}
    \left\{
    \begin{aligned}
        d\psi_t + A_w\psi_t\d t
        &= \bigl(F_w(u(t))-F_w(v_t)\bigr)\d t
        - \mu \, \rI_\delta \psi_t\d t
        - \mu G_{\delta}(u(t))\d W_t^Q,
        \qquad t\in (0,T),\\
        \psi(0)&=\psi_0:=u_0-v_0.
    \end{aligned}
    \right.
\end{equation}
The following convergence
result follows from \autoref{thm:stoch-H}, \autoref{cor:noise-floor}, and
\autoref{thm:as-convergence-multiplicative}.

\begin{cor}[Synchronisation for the weak two-dimensional NSEs]
\label{cor:weak-NSE-synchronisation}
Let $\delta_0>0$ be as in \textup{\bf(N1)} and \eqref{eq:Idelta-bound}.
There exist $\mu_0,\eta_0>0$ such that, for every $\mu\ge\mu_0$ and
$\delta\in(0,\delta_0]$ with $\mu\delta^2\le\eta_0$, the solution
$\psi=u-v$ of \eqref{eq:difference-Stokes} satisfies
\[
    \E\|\psi_t\|_2^2
    \le
    C e^{-\gamma t}\|\psi_0\|_2^2
    + C\mu^2 \int_0^t e^{- \gamma (t-s)} \| G_{\delta}(u(s)) \|^2_{\rL^0_2} \d s,
    \qquad t\ge0,
\]
for some constants $\gamma=\gamma(\mu,\delta)>0$ and $C>0$ independent of $t$.
If $\Gamma_u:= \sup_{t \geq 0}\| G_{\delta} (u(t))\|_{\rL_2^0}^2<\infty$, then
\[
    \E\|\psi_t\|_2^2
    \le
    C e^{-\gamma t}\|\psi_0\|_2^2
    + \frac{C\mu^2}{\gamma}\Gamma_u,
    \qquad
    \limsup_{t\to\infty}\E\|\psi_t\|_2^2
    \le
    \frac{C\mu^2}{\gamma}\Gamma_u .
\]
Finally, if
$
\int_0^\infty \|G_\delta(u(t))\|_{\mathrm L_2^0}^2\d t<\infty,
$
then
\[
    \sup_{t\ge N}\|\psi_t\|_2\to0,
    \qquad \mathbb P\text{-a.s. as }N\to\infty .
\]
\end{cor}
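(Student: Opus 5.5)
The corollary is a direct specialisation of \autoref{thm:stoch-H}, \autoref{cor:noise-floor} and \autoref{thm:as-convergence-multiplicative} to the triple $\cV=\rH^1_0\cap\rL^2_\sigma$, $\cH=\rL^2_\sigma$, $\cV^\ast=\rH^{-1}_\sigma$, with $A=A_w$ and $F=F_w=B(\cdot,\cdot)$. The plan is to check that every hypothesis of the abstract results holds for \eqref{eq: variational Stokes}, and then to translate the abstract conclusions into the notation $\psi=u-v$, $\|\cdot\|_\cH=\|\cdot\|_2$.

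First I would record the structural prerequisites. The spaces form a Gelfand triple. The identity $(\cV^\ast,\cV)_{\frac12,2}=\cH$ holds because $A_w$ is a positive self-adjoint isomorphism $\cV\to\cV^\ast$, so $\cV=D(A_w^{1/2})$ in the $\cV^\ast$ scale and the interpolation is the standard one. Moreover, $\cV_{3/4}=[\cV^\ast,\cV]_{3/4}$ is the domain of the corresponding fractional power. Via the Stokes resolvent on smooth bounded domains it embeds into $\rH^{1/2}$, which is the embedding already used in the text.

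The core hypotheses have been verified above. \textbf{(A1)} follows from Poincar\'e's inequality. \textbf{(A2)} holds with $m=1$, $\rho_1=1$, $\beta_1=\frac34$, and then $(2\beta_1-1)(\rho_1+1)=1$ satisfies \eqref{eq:A3_coefficients_bound}. \textbf{(A3)}-(i) applies since $\beta_1=\frac34$, and $\langle F_w(\phi),\phi\rangle=0$, so one may take $\eps_1$ arbitrarily small with $C^{(0)}_1=C^{(1)}_1=0$. \textbf{(A4)} holds once $\eps_1+\eps_2<\alpha/4$; here one must remember that $\|\nabla\xi\|_2\simeq\|\xi\|_\cV$, which gives \eqref{eq:A4}, and that $\kappa_u$ is multiplied by $C_{\eps_1,\eps_2}$, which preserves \eqref{eq:A4avg} with $M_0=0$ and $M_1\lesssim\|u_0\|_2^4$.

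The step that needs care is the justification of the two ingredients behind \eqref{eq:A4avg}. The first is the vanishing $\langle B(\xi,u),\xi\rangle=0$. I would prove it for smooth compactly supported divergence-free fields by integration by parts, using $(\xi\otimes u,\nabla\xi)=\int \xi_i u_j\partial_j\xi_i=\frac12\int u\cdot\nabla|\xi|^2=0$, and then extend by density together with the continuity of $B$ on $\cV_{3/4}\times\cV_{3/4}$. The second is the energy inequality. Its use needs $u\in\rL^2(0,T;\cV)\cap\rH^1(0,T;\cV^\ast)$, which \autoref{prop: global-finite} provides. Given this regularity, the Lions--Magenes lemma yields the energy equality, and this implies the strong energy inequality used above.

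Once the assumptions are in place, \textbf{(N1)} and \eqref{eq:Idelta-bound} are assumed in the statement. The well-posedness results of \autoref{sec: preparations} then give a unique $v$, and hence a unique $\psi$ solving \eqref{eq:difference-Stokes}, which coincides with \eqref{eq:model-difference-stoch}. The three assertions follow directly:
\begin{itemize}
\item \autoref{thm:stoch-H} yields the first estimate, with $\mu_0,\eta_0,\gamma,C$ exactly as in that theorem.
\item \autoref{cor:noise-floor} yields the noise-floor bound and the $\limsup$ statement under $\Gamma_u<\infty$.
\item \autoref{thm:as-convergence-multiplicative} yields the almost sure tail convergence under \eqref{eq:extra-integrability-Gutilde}.
\end{itemize}
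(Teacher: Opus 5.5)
Your proposal is correct and follows the paper's route. The paper likewise verifies \textbf{(A1)}--\textbf{(A4)} for the weak setting, with $m=1$, $\rho_1=1$, $\beta_1=\frac34$, the cancellation $\langle F_w(\phi),\phi\rangle=0$, and $\kappa_u=\|u\|_2^2\|\nabla u\|_2^2$ controlled by the energy inequality. It then reads off the three claims from \autoref{thm:stoch-H}, \autoref{cor:noise-floor} and \autoref{thm:as-convergence-multiplicative}. Your density argument for $\langle B(\xi,u),\xi\rangle=0$ and your Lions--Magenes derivation of the energy equality only make explicit what the paper asserts directly or cites from Prodi.

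One small slip: if $A_w$ is viewed as an operator on $\cV^\ast$ with domain $\cV$, then $D(A_w^{1/2})$ is $\cH$, not $\cV$. Equivalently, $\cV=D(A^{1/2})$ for the Stokes operator $A$ on $\cH$. The identity $(\cV^\ast,\cV)_{\frac12,2}=\cH$ that you need still holds, and in fact it holds for any Hilbert Gelfand triple.
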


\subsubsection{2D Quasi-geostrophic equations.}\mbox{}\\ Consider the variational formulation of the Quasi-geostrophic initial value problem on the two-dimensional torus $\mathbb T^2$. Set
$$
\cV:=\rH^1(\T^2)\cap \rL^2_0(\T^2),\qquad
\cH:=\rL^2_0(\T^2),\qquad
\cV^\ast:=\cV'
$$
where $\rL^2_0(\T^2)$ denotes the subset of $\rL^2(\T^2)$ of functions having zero spatial average, and $\cV'$ is the dual space of $\cV$. Define the negative weak Laplacian operator $-\Delta_w \in \mathcal{L}(\cV, \cV^\ast)$ by
\[
 \langle -\Delta_w v, \phi\rangle_{\cV^*,\cV}:=(\nabla v,\nabla \phi)_2\,.
\]
We denote by
$$
R^\perp\theta:=\nabla^\perp(-\Delta)^{-1/2}\theta
=
(-R_2\theta,R_1\theta),
$$
where $R_j:=\partial_j(-\Delta)^{-1/2}$, $j=1,2$, are the periodic Riesz transforms on $\T^2$. Since we work on mean-zero functions, this definition is well posed. Moreover, $\div(R^\perp\theta)=0$, and by the Mikhlin theorem $R^\perp$ extends to a bounded linear operator on $\rL^q(\T^2)$ for every $q\in(1,\infty)$. Setting
\[
\langle F_w(\theta),\phi\rangle_{\cV^*,\cV} :=
(\theta\,R^\perp\theta,\nabla\phi)_2,
\]
the variational formulation of the Quasi-geostrophic initial value problem reads as
\begin{equation}\label{eq: variational QG}\tag{weak-2D-QG}
    \left\{
    \begin{aligned}
       \theta'-\Delta_w \theta&=F_w(\theta) , \quad t \in (0,T) ,\\
        \theta(0)&=\theta_0.
    \end{aligned}
    \right.
\end{equation}
To verify $\bf(A1)$, note that, since we are working with functions having zero spatial average, Poincar\'e's inequality yields, for all $\theta\in \cV$,
$$
\langle -\Delta_w \theta,\theta\rangle_{\cV^*,\cV} =
\| \nabla \theta\|_2^2 \geq \alpha \|\theta\|_{\rH^1(\T^2)}^2 = \alpha \|\theta\|_{\cV}^2.
$$
Concerning $\bf(A2)$, define the bilinear map
$$
B(\theta,\eta)\in \cV^*,\qquad \langle B(\theta,\eta),\phi\rangle_{\cV^*,\cV} :=
(\theta\,R^\perp\eta,\nabla\phi)_2,
\qquad \theta,\eta,\phi\in\cV.
$$
Since $R^\perp$ is bounded on $\rL^4(\T^2)$ by the Mikhlin theorem, H\"older's inequality and the Sobolev embedding $\rH^{\frac{1}{2}}(\T^2)\hookrightarrow \rL^4(\T^2)$ imply that
$$
|\langle B(\theta,\eta),\phi\rangle_{\cV^*,\cV}|
\leq C\|\theta\|_4\|R^\perp\eta\|_4\|\nabla\phi\|_2
\leq C\|\theta\|_{\rH^{\frac{1}{2}}}\|\eta\|_{\rH^{\frac{1}{2}}}\|\phi\|_{\rH^1}.
$$
Hence $B: \cV_{\frac{3}{4}} \times \cV_{\frac{3}{4}} \to \cV^\ast $ is bilinear and bounded, where $\cV_{\frac{3}{4}} = \rH^{\frac{1}{2}}(\T^2) \cap \rL_0^2(\T^2)$. Since $F_w(\theta)=B(\theta,\theta)$, \autoref{rem: bilinear} implies that $\bf(A2)$ holds with $m=1$, $\rho_1=1$, and $\beta_1=\frac34$.\\
\noindent
We next verify \textup{\bf(A3)}-(i). Indeed, for all $\phi\in \cV$, by integration by parts on $\T^2$,
$$
\langle F_w(\phi),\phi\rangle_{\cV^*,\cV}
= (\phi\,R^\perp\phi,\nabla\phi)_2
= \frac12(R^\perp\phi,\nabla |\phi|^2)_2
=0,
$$
since $\operatorname{div}(R^\perp\phi)=0$.

We finally verify the validity of $\bf(A4)$ in the form of \eqref{eq:A4}-\eqref{eq:A4avg}. Let $\theta$ be the unique global solution to \eqref{eq: variational QG}, ensured by \autoref{prop: global-finite}, and let $\xi\in \cV$. Using the definition of $F_w$, we compute
\[
\begin{aligned}
\langle F_w(\theta(t))-F_w(\theta(t)-\xi),\xi\rangle_{\cV^\ast,\cV}
&=
(\theta(t)\,R^\perp\xi,\nabla\xi)_2
+
(\xi\,R^\perp\theta(t),\nabla\xi)_2
-
(\xi\,R^\perp\xi,\nabla\xi)_2.
\end{aligned}
\]
The last two terms vanish by integration by parts on $\T^2$ and the identity $\operatorname{div}(R^\perp f)=0$. Therefore
\[
\langle F_w(\theta(t))-F_w(\theta(t)-\xi),\xi\rangle_{\cV^\ast,\cV}
=
(\theta(t)\,R^\perp\xi,\nabla\xi)_2.
\]
Hence, by H\"older's inequality, the boundedness of $R^\perp$ on $\rL^4(\T^2)$, the Gagliardo-Nirenberg inequality, and Young's inequality, for every $\varepsilon>0$ we obtain
\[
\begin{aligned}
|(\theta(t)\,R^\perp\xi,\nabla\xi)_2|
&\le
C\|\theta(t)\|_4\|R^\perp\xi\|_4\|\nabla\xi\|_2 \le
C\|\theta(t)\|_4\|\xi\|_4\|\nabla\xi\|_2 \\
&\le
C\|\theta(t)\|_2^{1/2}\|\nabla\theta(t)\|_2^{1/2}
  \|\xi\|_2^{1/2}\|\nabla\xi\|_2^{3/2} \\
&\le
\varepsilon\|\nabla\xi\|_2^2
+
c\|\theta(t)\|_2^2\|\nabla\theta(t)\|_2^2\|\xi\|_2^2.
\end{aligned}
\]
Therefore \eqref{eq:A4} holds with $
\kappa_\theta(t):= c\|\theta(t)\|_2^2\|\nabla\theta(t)\|_2^2.$ Moreover, by the energy estimate on $\theta$,
$$
\int_s^t \kappa_\theta(\tau)\d \tau \leq
c\sup_{\tau\in[s,t]}\|\theta(\tau)\|_2^2
\int_s^t \|\nabla\theta(\tau)\|_2^2\d \tau
\leq c\|\theta_0\|_2^4,
$$
so \eqref{eq:A4avg} is satisfied with $M_0=0$ and $M_1=c\|\theta_0\|_2^4$.

 Assuming also \textup{\bf(N1)} and \eqref{eq:Idelta-bound}, we denote by $\eta$ the unique solution to the data assimilation problem associated with \eqref{eq: variational QG}. Let $\Theta:=\theta-\eta$ be the assimilation error, that satisfies
\begin{equation} \label{eq:difference-QG} 
    \left\{
    \begin{aligned}
        d\Theta_t - \Delta _w\Theta_t\d t
        &= \bigl(F_w(\theta_t)-F_w(\eta_t)\bigr)\d t
        - \mu \, \rI_\delta \Theta_t\d t
        - \mu G_{\delta}(\theta(t))\d W_t^Q,
        \qquad t\in (0,T),\\
        \Theta(0)&=\Theta_0:=\theta_0-\eta_0.
    \end{aligned}
    \right.
\end{equation}
Therefore, by \autoref{thm:stoch-H}, \autoref{cor:noise-floor}, and
\autoref{thm:as-convergence-multiplicative} the following convergence result holds.
\begin{cor}[Synchronisation for the weak two-dimensional QG equations]
\label{cor:weak-QG-synchronisation}
Let $\delta_0>0$ be as in \textup{\bf(N1)} and \eqref{eq:Idelta-bound}.
There exist $\mu_0,\eta_0>0$ such that, for every $\mu\ge\mu_0$ and
$\delta\in(0,\delta_0]$ with $\mu\delta^2\le\eta_0$, the solution
$\Theta=\theta-\eta$ of \eqref{eq:difference-QG} satisfies
\[
    \E\|\Theta_t\|_2^2
    \le
    C e^{-\gamma t}\|\Theta_0\|_2^2
    + C\mu^2 \int_0^t e^{- \gamma(t-s) }  \| G_\delta (\theta (s)) \|_{\rL_2^0 }^2 \d s,
    \qquad t\ge0,
\]
for some constants $\gamma=\gamma(\mu,\delta)>0$ and $C>0$ independent of $t$.
If $\Gamma_\theta := \sup_{t \geq 0} \| G_{\delta}(\theta (t)) \|_{\rL_2^0}^2 <\infty$, then
\[
    \E\|\Theta_t\|_2^2
    \le
    C e^{-\gamma t}\|\Theta_0\|_2^2
    + \frac{C\mu^2}{\gamma}\Gamma_\theta,
    \qquad
    \limsup_{t\to\infty}\E\|\Theta_t\|_2^2
    \le
    \frac{C\mu^2}{\gamma}\Gamma_\theta .
\]
Finally, if
$
\int_0^\infty \|G_\delta(\theta(t))\|_{\mathrm L_2^0}^2\d t<\infty,
$
then
\[
    \sup_{t\ge N}\|\Theta_t\|_2\to0,
    \qquad \mathbb P\text{-a.s. as }N\to\infty .
\]
\end{cor}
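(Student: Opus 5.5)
The corollary follows directly from \autoref{thm:stoch-H}, \autoref{cor:noise-floor}, and \autoref{thm:as-convergence-multiplicative}, applied with the triple $\cV=\rH^1(\T^2)\cap\rL^2_0(\T^2)$, $\cH=\rL^2_0(\T^2)$, $\cV^\ast=\cV'$, $A=-\Delta_w$, and $F=F_w$. The plan is to check each hypothesis of those abstract results and then read off the three conclusions, identifying $u$ with $\theta$, $v$ with $\eta$, and $w$ with $\Theta$.

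\textbf{Structural hypotheses.} Hypothesis (A1) holds by Poincar\'e's inequality on mean-zero functions. Hypothesis (A2) holds with $m=1$, $\rho_1=1$, $\beta_1=\frac34$, via \autoref{rem: bilinear} and the bound
\[
|\langle B(\theta,\eta),\phi\rangle_{\cV^\ast,\cV}|\lesssim\|\theta\|_{\rH^{1/2}}\|\eta\|_{\rH^{1/2}}\|\phi\|_{\rH^1}.
\]
Two points deserve care here:
\begin{itemize}
\item the identification $\cV_{3/4}=[\cV^\ast,\cV]_{3/4}\hookrightarrow\rH^{1/2}(\T^2)$, which on the torus follows from the Fourier-multiplier description of the spaces;
\item the interpolation condition $(2\beta_1-1)(\rho_1+1)=\frac12\cdot 2=1\le1$.
\end{itemize}
Hypothesis (A3)-(i) holds since $\beta_1=\frac34$ and $\langle F_w(\phi),\phi\rangle=0$; this allows $\eps_1$ arbitrarily small with $C^{(0)}=C^{(1)}=0$, so $\eps_1<\alpha/4$. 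Well-posedness of $\theta$ then comes from \autoref{prop: global-finite}, and well-posedness of the data-assimilation solution $\eta$ from the results of \autoref{sec: preparations}.

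\textbf{Hypothesis (A4).} From the computation preceding the statement,
\[
\langle F_w(\theta)-F_w(\theta-\xi),\xi\rangle=(\theta\,R^\perp\xi,\nabla\xi)_2 .
\]
The two cancelled terms vanish by integration by parts and $\div R^\perp=0$, using that $\theta$ and $\xi$ lie in $\rH^1$ so the manipulations are justified. Gagliardo--Nirenberg then gives \eqref{eq:A4} with $\kappa_\theta=c\|\theta\|_2^2\|\nabla\theta\|_2^2$, where $\eps$ is chosen below $\alpha/4$.

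The averaged bound \eqref{eq:A4avg} needs the energy inequality
\[
\tfrac12\|\theta(t)\|_2^2+\int_s^t\|\nabla\theta\|_2^2\le\tfrac12\|\theta(s)\|_2^2 .
\]
This follows by testing with $\theta$, which is admissible because $\theta\in\rL^2(0,T;\cV)\cap\rH^1(0,T;\cV^\ast)$ (Lions--Magenes), and using $\langle F_w(\theta),\theta\rangle=0$. It yields $M_0=0$ and $M_1=c\|\theta_0\|_2^4$.

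\textbf{Remaining hypotheses and conclusion.} Hypothesis (N1) and the observation-operator bound \eqref{eq:Idelta-bound} are assumed in the statement, with the same $\delta_0$. The abstract theorems then give the three conclusions:
\begin{itemize}
\item the mean-square estimate, from \autoref{thm:stoch-H};
\item the noise-floor bounds under $\Gamma_\theta<\infty$, from \autoref{cor:noise-floor};
\item the almost sure uniform tail convergence under the integrability assumption, from \autoref{thm:as-convergence-multiplicative}.
\end{itemize}
The constants $\mu_0$, $\eta_0$, $\gamma$, and $C$ are inherited from these results.

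\textbf{Main obstacle.} The only genuinely delicate point is the rigorous justification of the cancellations $(\xi R^\perp\theta,\nabla\xi)_2=0$ and $(\xi R^\perp\xi,\nabla\xi)_2=0$ for $\xi\in\rH^1$ and $\theta\in\rH^1$. I would handle this by first proving them for smooth mean-zero functions, writing $(fR^\perp g,\nabla f)_2=\frac12(R^\perp g,\nabla|f|^2)_2=0$, and then extending by density. The extension is valid because the trilinear form $(a\,R^\perp b,\nabla c)_2$ is continuous on $\rH^{1/2}\times\rH^{1/2}\times\rH^1$.
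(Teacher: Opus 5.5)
Your proposal is correct and follows essentially the same route as the paper. You verify \textbf{(A1)}--\textbf{(A4)} for the triple $\rH^1(\T^2)\cap\rL^2_0(\T^2)\subset\rL^2_0(\T^2)\subset\cV'$ exactly as in the computations preceding the corollary: $m=1$, $\rho_1=1$, $\beta_1=\frac34$, the cancellation $\langle F_w(\phi),\phi\rangle_{\cV^\ast,\cV}=0$, and $\kappa_\theta=c\|\theta\|_2^2\|\nabla\theta\|_2^2$ with $M_0=0$, $M_1=c\|\theta_0\|_2^4$. You then read off the conclusions from \autoref{thm:stoch-H}, \autoref{cor:noise-floor} and \autoref{thm:as-convergence-multiplicative}; your density argument for the cancellations and the Lions--Magenes justification of the energy equality simply make explicit details the paper leaves implicit.
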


\subsubsection{2D MHD equations.}\mbox{}\\
We consider the initial boundary value problem for the 2D Magnetohydrodynamics equations
(MHD equations) in a bounded simply connected domain $\D\subset\R^2$ with sufficiently
smooth boundary $\partial\D$. We set
$$
\cV : = (\rH^1_{0}(\D;\R^2)\cap \rL^2_\sigma(\D;\R^2)) \times V_2\,,
\qquad \cH : =\rL^2_\sigma(\D;\R^2) \times \rL^2_\sigma(\D;\R^2)\,,
\qquad
\cV^\ast:=\cV',
$$
where, setting
$$
\mathscr{C}(\D):=
\{ h \in \rC^\infty (\overline{\D}; \mathbb{R}^2): \div h=0,\ h \cdot \nu|_{\partial \D}=0\},
$$
we define $V_2$ as the completion of $\mathscr{C}(\D)$ with respect to the
$\rH^1$-norm. In particular, since $\D$ is simply connected, the following
Poincar\'e-type estimate holds
$$
\| h \|_{\rH^1(\D)} \leq C\| \rot h \|_2,
\qquad h \in V_2.
$$
In $\mathbb{R}^2$ we write
$$
\rot a:=\frac{\partial a_2 } {\partial x_1} -\frac{ \partial a_1}{ \partial x_2},
\qquad a=(a_1,a_2)\in \rC^1(\R^2;\R^2).
$$
Let $\Phi=(u,h),$ $ \Psi=(\overline{u},\overline h)\in\cV$. We define the operator
$A_w \in \mathcal{L}(\cV,\cV^\ast)$ as
$$
\langle A_w \Phi, \Psi \rangle_{\cV^*,\cV}
:=(\nabla u, \nabla \overline{u})_2+(\rot h, \rot \overline{h})_2.
$$
Setting
$$
\langle F_w(\Phi), \Psi \rangle_{\cV^*,\cV}
:= (u \otimes u, \nabla \overline{u})_2
-(h \otimes h , \nabla \overline{u})_2
+(h \otimes u,\nabla \overline{h})_2
-(u\otimes h,\nabla \overline{h})_2,
$$
the variational formulation of the MHD initial boundary value problem reads as
\begin{equation}\label{eq: variational MHD}\tag{weak-2D-MHD}
    \left\{
    \begin{aligned}
       \Phi'+A_w \Phi&= F_w(\Phi) , \quad t \in (0,T) ,\\
        \Phi(0)&=\Phi_0.
    \end{aligned}
    \right.
\end{equation}
To verify $\bf(A1)$, we notice that, by virtue of Poincar\'e's inequality and
the above estimate on $V_2$, we have
$$
\langle A_w \Phi, \Phi \rangle_{\cV^*,\cV}
= \| \nabla u \|_2^2+\|\rot h\|_2^2
\geq \alpha\|\Phi\|_{\cV}^2,
$$
for some $\alpha>0$. Concerning $\bf(A2)$, define the bilinear map
$ B: \cV \times \cV \to \cV^\ast $ by
$$
\langle B( \Phi, \Psi), \Pi\rangle_{\cV^*,\cV}:=( u\otimes \overline{u}, \nabla \widetilde{u})_2
-(h \otimes \overline{h}, \nabla \widetilde u)_2   +(h\otimes \overline u, \nabla \widetilde{h})_2
-(u\otimes \overline{h},\nabla \widetilde{h})_2,
$$
for $\Phi=(u,h)$, $\Psi=(\overline u,\overline{h})$, and
$\Pi=(\widetilde u,\widetilde{h})$ in $\cV$. By H\"older's inequality and the
Sobolev embedding $\rH^{\frac{1}{2}}(\D)\hookrightarrow \rL^4(\D)$, we obtain
$$
\vert \langle B(\Phi , \Psi) , \Pi \rangle_{\cV^*,\cV}\vert 
\leq C \| \Phi \|_{\cV_{\frac{3}{4}}} \| \Psi \|_{\cV_{\frac{3}{4}}} \| \Pi \|_{\cV}.
$$
Hence $B$ extends to a bounded bilinear map from
$\cV_{\frac34}\times \cV_{\frac34}$ into $\cV^*$. Since
$F_w(\Phi) = B( \Phi , \Phi)$, \autoref{rem: bilinear} implies that
$\bf(A2)$ holds with $m=1$, $\rho_1=1$, and $\beta_1=\frac{3}{4}$. Moreover,
recalling that given $\Phi=(u,h)\in \cV$, $u$ has zero trace on $\partial\D$
and both $u$ and $h$ are divergence-free, we deduce that
$$
\langle F_w( \Phi ) , \Phi \rangle_{\cV^\ast ,\cV}=0,
$$
and therefore $\bf(A3)$ is fulfilled. \par

We finally show that $\bf(A4)$ is fulfilled. Let $\Phi$ be the unique solution to \eqref{eq: variational MHD} ensured by \autoref{prop: global-finite}. For all
$\Psi=(\overline{u},\overline{h})\in \cV$, in view of the boundary conditions and
since $\Phi$ and $\Psi$ are divergence-free, we find that
$$
\langle F_w(\Phi)-F_w(\Phi-\Psi), \Psi \rangle_{\cV^\ast ,\cV} = (u\otimes \overline{u}, \nabla \overline{u})_2
-(h \otimes \overline{h},\nabla \overline u)_2 
+( h \otimes \overline{u}, \nabla \overline{h})_2
-(u\otimes\overline{h},\nabla\overline{h})_2.
$$
Employing H\"{o}lder's inequality, Ladyzhenskaya's inequality and Young's
inequality, we find that, for every $\varepsilon>0$,
\begin{equation*}
\begin{aligned}
|(u\otimes \overline u,\nabla\overline{u})_2|
&\leq \|\nabla\overline u\|_2\|u\|_4\|\overline{u}\|_4  \\
&\leq \varepsilon \| \nabla \overline{u}\|_2^2
+C_\varepsilon
\| u \|_2^2\|\nabla u\|_2^2\|\overline{u}\|_2^2,
\\[0.3em]
|(h \otimes \overline{h},\nabla\overline u)_2|
&\leq \|\nabla \overline {u}\|_2\|\overline{h}\|_4 \| h \|_4  \\
&\leq \varepsilon (\|\nabla \overline{u}\|_2^2
+\|\nabla \overline{h}\|_2^2 )
+C_\varepsilon \|h\|_2^2\|\nabla h\|_2^2 \| \overline{h} \|_2^2,
\\[0.3em]
\vert (h\otimes \overline{u}, \nabla \overline{h})_2 \vert 
&\leq \|\nabla\overline{h}\|_2\|\overline{u}\|_4 \| h \|_4  \\
&\leq \varepsilon (\|\nabla \overline{u}\|_2^2 +\|\nabla \overline{h}\|_2^2 )
+C_\varepsilon \| h \|_2^2 \| \nabla h\|_2^2\| \overline{u}\|_2^2,
\\[0.3em]
\vert (u\otimes\overline{h},\nabla \overline{h})_2\vert  &\leq \|\nabla \overline{h} \|_2\| u \|_4\| \overline{h} \|_4  \\
&\leq \varepsilon\|\nabla \overline{h}\|_2^2
+C_\varepsilon
\|u\|_2^2\|\nabla u \|_2^2\| \overline{h} \|_2^2.
\end{aligned}
\end{equation*}
Hence, defining
$$
\kappa_\Phi(t) : = C_\varepsilon (
\|u(t)  \|_2^2\|   \nabla u(t)\|_2^2 +\|h(t)\|_2^2\|\nabla h(t)\|_2^2 ),
$$
and relabelling $\varepsilon>0$, we obtain
$$
\vert \langle F_w(\Phi(t)) - F_w (\Phi(t)-\Psi) , \Psi \rangle_{\cV^\ast,\cV} \vert
\leq \varepsilon \| \Psi \|_{\cV}^2
+\kappa_\Phi(t)\|\Psi\|_{\cH}^2.
$$
Moreover, the energy inequality for weak two-dimensional MHD solutions, see
\cite{Temam_MHD}, gives
$$
\sup_{\tau \geq 0}\| \Phi ( \tau ) \|_{\cH}^2 + \int_s^t \| \Phi ( \tau )\|_{\cV}^2 \d \tau \leq C \| \Phi_0\|_{\cH}^2, \qquad 0 \leq s  \leq t.
$$
Consequently,
$$
\int_s^t \kappa_\Phi(\tau)\d \tau \leq C
\sup_{\tau \in[s,t]} \|\Phi(\tau) \|_{\cH}^2 \int_s^t \| \Phi(\tau ) \|_{\cV}^2 \d \tau
\leq C\|\Phi_0\|_{\cH}^4.
$$
Therefore $\bf(A4)$ holds with $M_0=0$ and $M_1=C\|\Phi_0\|_{\cH}^4$.

Assuming that also \textup{\bf(N1)} and \eqref{eq:Idelta-bound} hold, we denote by $\chi$ the unique solution to the stochastic data assimilation problem associated with \eqref{eq: variational MHD}, ensured by the validity of \autoref{prop:DAmult-wp}. Let $\Xi:=\Phi-\chi$ be the assimilation error, that satisfies 
\begin{equation} \label{eq:difference-MHD}
    \left\{
    \begin{aligned}
        d\Xi_t + A_w\Xi_t\d t
        &= \bigl(F_w(\Phi(t))-F_w(\chi_t)\bigr)\d t
        - \mu \, \rI_\delta \Xi_t\d t
        - \mu G_{\delta}(\Phi(t))\d W_t^Q,
        \qquad t\in (0,T),\\
        \Xi(0)&=\Xi_0:=\Phi_0-\chi_0.
    \end{aligned}
    \right.
\end{equation}
Applying \autoref{thm:stoch-H}, \autoref{cor:noise-floor}, and \autoref{thm:as-convergence-multiplicative}, we deduce the following result.
\begin{cor}[Synchronisation for the weak two-dimensional MHD equations]
Let $\delta_0>0$ be as in \textup{\bf(N1)} and \eqref{eq:Idelta-bound}.
There exist $\mu_0,\eta_0>0$ such that, for every $\mu\ge\mu_0$ and
$\delta\in(0,\delta_0]$ with $\mu\delta^2\le\eta_0$, the unique solution $\Xi $ satisfies
\[
    \E\|\Xi_t\|_{\cH}^2
    \le
    C e^{-\gamma t}\|\Xi_0\|_{\cH}^2
    + C\mu^2 \int_0^t e^{- \gamma(t-s) }  \| G_\delta (\Phi (s)) \|_{\rL_2^0 }^2 \d s,
    \qquad t\ge0,
\]
for some constants $\gamma=\gamma(\mu,\delta)>0$ and $C>0$ independent of $t$.
If $\Gamma_\Phi := \sup_{t \geq 0} \| G_{\delta}(\Phi  (t)) \|_{\rL_2^0}^2 <\infty$, then
\[
    \E\|\Xi_t\|_{\cH}^2
    \le
    C e^{-\gamma t}\|\Xi_0\|_{\cH}^2
    + \frac{C\mu^2}{\gamma}\Gamma_\Phi,
    \qquad
    \limsup_{t\to\infty}\E\|\Xi_t\|_{\cH}^2
    \le
    \frac{C\mu^2}{\gamma}\Gamma_\Phi .
\]
Finally, if
$
\int_0^\infty \|G_\delta( \Phi(t))\|_{\mathrm L_2^0}^2\d t<\infty,
$
then
\[
    \sup_{t\ge N}\|\Xi_t\|_{\cH}\to0,
    \qquad \mathbb P\text{-a.s. as }N\to\infty .
\]
\end{cor}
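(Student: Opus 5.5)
The corollary follows from \autoref{thm:stoch-H}, \autoref{cor:noise-floor} and \autoref{thm:as-convergence-multiplicative} once the data are placed in the abstract setting. We take $\cV$, $\cH$, $\cV^\ast$, $A=A_w$ and $F=F_w$ as defined above for the MHD system. The hypotheses \textbf{(A1)}--\textbf{(A4)} were verified in the preceding discussion, while \textbf{(N1)} and \eqref{eq:Idelta-bound} are assumed. So we only need to check that the abstract framework is genuinely satisfied and then read off the three conclusions.

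Step 1 concerns the structural requirement $(\cV^\ast,\cV)_{\frac12,2}=\cH$. The operator $A_w$ is symmetric, bounded and coercive on the Hilbert pair $(\cV,\cV^\ast)$, so $\cV$ is the form domain of the self-adjoint positive realisation of $A_w$ in $\cH$. Standard interpolation theory for such square-root scales then gives the real interpolation identity. The same reasoning shows that $\cV_{3/4}=[\cV^\ast,\cV]_{3/4}$ embeds into $\rH^{1/2}(\D)^4$, which is what the bound on $B$ in \textbf{(A2)} used. We also record the exponent condition $(2\beta_1-1)(\rho_1+1)=\tfrac12\cdot 2=1$, which satisfies the requirement.

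Step 2 is the smallness constraint in \textbf{(A3)}--(i) and \textbf{(A4)}. In \textbf{(A3)}--(i) we have $\langle F_w(\Phi),\Phi\rangle=0$, so $\eps_1$ may be taken as small as we like. In \textbf{(A4)}, the four estimates bound the terms by $\varepsilon(\|\nabla\overline u\|_2^2+\|\nabla\overline h\|_2^2)$. Using the Poincaré inequality and the estimate $\|h\|_{\rH^1}\le C\|\rot h\|_2$ on $V_2$, this is at most $C\varepsilon\|\Psi\|_\cV^2$. We fix $\varepsilon$ so that $C\varepsilon<\alpha/4$ and absorb the resulting $C_\varepsilon$ into $\kappa_\Phi$. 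The average bound \eqref{eq:A4avg} then holds with $M_0=0$ and $M_1=C\|\Phi_0\|_\cH^4$, by the energy inequality.

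Step 3 is the conclusion. Well-posedness of $\chi$ follows from \autoref{prop:DAmult-wp}, and $\Xi=\Phi-\chi$ solves \eqref{eq:model-difference-stoch} in the present notation. \autoref{thm:stoch-H} therefore gives the first estimate with constants $\mu_0,\eta_0,\gamma,C$. If $\Gamma_\Phi<\infty$, \autoref{cor:noise-floor} gives the noise-floor bound and the $\limsup$ bound directly, using $\int_0^t e^{-\gamma(t-s)}\d s\le 1/\gamma$. Finally, the integrability assumption is exactly \eqref{eq:extra-integrability-Gutilde}, so \autoref{thm:as-convergence-multiplicative} yields the almost sure uniform tail convergence.

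The only delicate point is Step 1: the identification of the interpolation spaces for the mixed product space involving $V_2$, whose functions satisfy only the boundary condition $h\cdot\nu=0$. We would handle it componentwise. The $u$-component is the Stokes scale. For the $h$-component we would use that $\rot^\ast\rot$ with this boundary condition is a positive self-adjoint operator on $\rL^2_\sigma$ with form domain $V_2$, so its fractional powers give the interpolation scale, and the half-order space embeds into $\rH^{1/2}$.
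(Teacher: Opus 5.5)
Your proposal is correct and follows the paper's route: the corollary comes from verifying \textbf{(A1)}--\textbf{(A4)} for $(A_w,F_w)$ on the MHD Gelfand triple and then invoking \autoref{thm:stoch-H}, \autoref{cor:noise-floor} and \autoref{thm:as-convergence-multiplicative}. The verification uses $m=1$, $\beta_1=\frac34$, $\rho_1=1$, and the cancellation $\langle F_w(\Phi),\Phi\rangle_{\cV^\ast,\cV}=0$ makes the $\eps$-constants arbitrarily small. The point you flag as delicate is in fact automatic: $(\cV^\ast,\cV)_{\frac12,2}=\cH$ holds for every Hilbert Gelfand triple, and $\cV_{3/4}=[\cH,\cV]_{1/2}\hookrightarrow[\rL^2(\D)^4,\rH^1(\D)^4]_{1/2}=\rH^{1/2}(\D)^4$ follows simply by interpolating the inclusions $\cH\hookrightarrow\rL^2(\D)^4$ and $\cV\hookrightarrow\rH^1(\D)^4$, with no need for a spectral analysis of the curl--curl operator on $V_2$.
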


\subsubsection{1D Allen-Cahn Equation.}\mbox{}\\
We consider the variational formulation of the 1D Allen-Cahn initial boundary value problem on $I=(0,1)\subset \R$.
We set
$$
 \cV:=\rH^1_0(I),\qquad \cH:=\rL^2(I),\qquad
 \cV^\ast:=\cV',
$$
where $\cV'$ is the dual space of $\cV$. Let $A_w:= - \partial_{xx}^w \in \mathcal{L}(\cV, \cV^\ast)$ be the negative weak Laplacian, i.e. 
$$
\langle -\partial_{xx}^w u, \phi \rangle_{\cV^*,\cV} = ( \partial_x u , \partial_x \phi)_2 \ \text{ for all } \ u,\phi \in \cV
$$
%\textcolor{red}{G. probably should be $\langle \partial_{xx}^w u, \phi \rangle_{\cV^*,\cV}$ }
and set 
$$\langle F_w(u), \phi \rangle_{\cV^*,\cV} := (u,\phi)_2 - (u^3,\phi)_2 \ \text{ for all } \ u,\phi \in \cV\,.$$
%\textcolor{red}{G. probably should be $\langle \partial_{xx}^w u, \phi \rangle_{\cV^*,\cV}$ }
The variational 1D Allen-Cahn initial boundary value problem reads as
\begin{equation}\label{eq: variational AC}\tag{weak-1D Allen-Cahn}
    \left\{
    \begin{aligned}
       u'+  A_w u&=F_w(u) , \quad t \in (0,T) ,\\
        u(0)&=u_0.
    \end{aligned}
    \right.
\end{equation}
To verify $\bf(A1)$, note that by Poincar\'e's inequality we have
$$
\langle  -\partial_{xx}^w u, u \rangle_{\cV^*,\cV} = \| \partial_x u \|^2_2  \geq \alpha \| u \|^2_{\rH^1} \ \text{ for all } \ u \in \cV\,,
$$
%\textcolor{red}{G. probably should be $\langle \partial_{xx}^w u, \phi \rangle_{\cV^*,\cV}$ }
for a constant $\alpha >0$. Regarding \textup{\bf(A2)}, fix $r\in(1,2)$. Since
$\rL^r(I)\hookrightarrow \rH^{-1}(I)$, H\"older's inequality gives
$$
\|F_w(u)-F_w(v)\|_{\cV^\ast} \leq
C( \|u-v\|_2+\|(u^2+v^2)(u-v)\|_r) \leq
C(\|u-v\|_2+ (\|u\|_{3r}^2+\|v\|_{3r}^2)\|u-v\|_{3r}
).
$$
Moreover, for $r \in (1,2),$ $\cV_{\frac23}=[\cV^\ast,\cV]_{\frac{2}{3}} \hookrightarrow \rH^{\frac13}(I) \hookrightarrow \rL^{3r}(I),$  and $\cV_{\frac23}\hookrightarrow \cH$. Hence
$$
\|F_w(u)-F_w(v)\|_{\cV^\ast} \leq C(
        1+ \| u \|_{\cV_{ \frac{2}{3}}}^2 + \| v \|_{\cV_{ \frac{2}{3}}}^2 )
    \| u-v \|_{\cV_{\frac{2}{3}}} .
$$
Thus \textup{\bf(A2)} holds with $m=1$, $\rho_1=2$, and $\beta_1=\frac{2}{3}$.

\par We now focus on the validity of $\bf(A3)$. Let $u,\phi\in\cV$. Expanding the cubic term,
\begin{equation*}
    \begin{split}
\langle F_w(u+\phi),u\rangle_{\cV^\ast,\cV} &= (u+\phi,u)_2 -((u+\phi)^3,u)_2 \\ &=\|u\|_{2}^2+( \phi,u)_2   -\|u\|_4^4
-3\int_0^1  u^3\phi \d x     -3\int_0^1u^2\phi^2 \d  x
-\int_0^1u\phi^3  \d x\\ &\leq\|u\|_{2}^2+( \phi,u)_2   -\|u\|_4^4
-3\int_0^1  u^3\phi \d x     
-\int_0^1u\phi^3  \d x.
    \end{split}
\end{equation*}
By virtue of Young's inequality, for $\eta\in(0,1)$,
$$
3 \vert u\vert ^3|\phi|\leq \eta \vert u \vert ^4 + C_\eta |\phi|^4,  \quad \vert u  \vert |\phi|^3 \leq \eta |u|^4 + C_\eta |\phi|^4.
$$
Choosing $\eta=\frac14$, we obtain 
$$
\langle F_w(u+\phi),u\rangle_{\cV^\ast,\cV}
\le C\|u\|_{2}^2 + C\|\phi\|_{2}^2 + C\|\phi\|_{\rL^4}^4\,,
$$
which implies
$$
\langle F_w(u+\phi),u\rangle_{\cV^\ast,\cV}
\leq 
 C\,\|u\|_{\cH}^2 + b(\phi),
$$
with
$$
b(\phi):=C(\|\phi\|_{2}^2+\|\phi\|_{4}^4)
\leq C ( \| \phi \|_{2}^2 + \| \phi \|_2^2\| \nabla \phi \|_2^2).
$$
For every $z\in C([0,T];\cH)\cap \rL^2(0,T;\cV)$, we have
$$
\int_0^T b(z(t))\d t\leq C\Bigl(T\sup_{t\in[0,T]}\|z(t)\|_{\cH}^2 +\sup_{t\in[0,T]}\|z(t)\|_{\cH}^2 \int_0^T  \| z(t) \|_{\cV}^2 \d t \Bigr)<\infty.
$$
Therefore \textup{\bf(A3)}-(ii) holds with $\Psi(\phi):=b(\phi).$

\par We now show the validity of $\bf(A4)$. We denote by $u$ the unique global solution to \eqref{eq: variational AC} ensured by the validity of $\bf(A1)$-$\bf(A3)$. Let $\xi\in \cV$. We have 
$$
F_w(u)-F_w(u-\xi)=\xi-(u^3-(u-\xi)^3) =\xi-3u^2\xi+3u\xi^2-\xi^3.
$$
Pairing with $\xi$,
$$
\langle F_w(u)-F_w(u-\xi),\xi\rangle_{\cV^\ast,\cV}
=\int_0^1(1-3u^2)\xi^2 \d x+3\int_0^1u\xi^3 \d x-\int_0^1\xi^4 \d x.
$$
Using $(1-3u^2)\xi^2\le (1+3u^2)\xi^2$ and the pointwise Young inequality
$$
\vert u \vert  \vert \xi\vert ^3 = (|u||\xi|) |\xi|^2 \leq \varepsilon \vert\xi \vert ^4 + C_\varepsilon u^2\xi^2,
$$
we obtain for small $\varepsilon$
$$
\langle F_w(u)-F_w(u-\xi),\xi\rangle_{\cV^\ast,\cV} \leq C(1+\|u\|_{\rL^\infty}^2)\,\|\xi\|_{\cH}^2
-(1-3\varepsilon)\|\xi\|_{\rL^4}^4
\leq C(1+\|u\|_{\rL^\infty}^2)\,\|\xi\|_{\cH}^2.
$$
Now observe that in dimension 1, $\cV = \rH^1_0 \hookrightarrow \rL^\infty $, thus
$$
\|u(t)\|_\infty\leq C \| u (t)\|_{\cV}\,.
$$
Therefore \textup{\bf(A4)} holds with
$$
    \kappa_u(t):=C(1+\|u(t)\|_{\cV}^2).
$$
Indeed, the energy estimate gives
$$
\int_s^t \|u(\tau)\|_{\cV}^2 \d \tau \leq C (1+(t-s)+\|u_0\|_2^2),
$$
and hence
$$
    \int_s^t \kappa_u(\tau) \d \tau \leq M_0(t-s)+M_1
$$
for suitable constants $M_0,M_1 \geq0$.

Assuming also \textup{\bf(N1)} and \eqref{eq:Idelta-bound}, we denote by $v$
the unique solution to the stochastic data assimilation problem associated with
\eqref{eq: variational AC}, and set $\psi:=u-v$. Then the following convergence
result follows from \autoref{thm:stoch-H}, \autoref{cor:noise-floor}, and
\autoref{thm:as-convergence-multiplicative}.

\begin{cor}[Synchronisation for the weak one-dimensional Allen-Cahn equation]
\label{cor:weak-1D-ACE-syncro}
Let $\delta_0>0$ be as in \textup{\bf(N1)} and \eqref{eq:Idelta-bound}.
There exist $\mu_0,\eta_0>0$ such that, for every $\mu\ge\mu_0$ and
$\delta\in(0,\delta_0]$ with $\mu\delta^2\le\eta_0$, the error $\psi$ satisfies
\[
    \E\|\psi_t\|_2^2
    \le
    C e^{-\gamma t}\|\psi_0\|_2^2
    + C\mu^2 \int_0^t e^{- \gamma(t-s) }  \| G_\delta (u (s)) \|_{\rL_2^0 }^2 \d s,
    \qquad t\ge0,
\]
for some constants $\gamma=\gamma(\mu,\delta)>0$ and $C>0$ independent of $t$.
If $\Gamma_u := \sup_{t \geq 0} \| G_{\delta}(u (t)) \|_{\rL_2^0}^2 <\infty$, then
\[
    \E\|\psi_t\|_2^2
    \le
    C e^{-\gamma t}\|\psi_0\|_2^2
    + \frac{C\mu^2}{\gamma}\Gamma_u,
    \qquad
    \limsup_{t\to\infty}\E\|\psi_t\|_2^2
    \le
    \frac{C\mu^2}{\gamma}\Gamma_u .
\]
Finally, if
$
\int_0^\infty \|G_\delta( u(t))\|_{\mathrm L_2^0}^2\d t<\infty,
$
then
\[
    \sup_{t\ge N}\|\psi_t\|_2\to0,
    \qquad \mathbb P\text{-a.s. as }N\to\infty .
\]
\end{cor}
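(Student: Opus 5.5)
The statement is a direct specialisation of \autoref{thm:stoch-H}, \autoref{cor:noise-floor} and \autoref{thm:as-convergence-multiplicative} to the Gelfand triple $\cV=\rH^1_0(I)$, $\cH=\rL^2(I)$, $\cV^\ast=\rH^{-1}(I)$ with $A=A_w=-\partial_{xx}^w$ and $F=F_w$. The plan is therefore to collect the verifications carried out above and check that every hypothesis of the abstract results is met; then each of the three claimed estimates is read off from the corresponding abstract statement with $w=\psi$.

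\textbf{Checking the hypotheses.} First, the triple is Gelfand with $(\cV^\ast,\cV)_{\frac12,2}=\cH$, which is standard for $\rH^{-1}_0,\rH^1_0$ on an interval. Coercivity \textbf{(A1)} follows from Poincar\'e's inequality. Assumption \textbf{(A2)} holds with $m=1$, $\beta_1=\frac23$, $\rho_1=2$, and the constraint $(2\beta_1-1)(\rho_1+1)=\frac13\cdot 3=1\le 1$ is satisfied. Since $\beta_1<\frac34$, alternative \textbf{(A3)}-(ii) is the relevant one. It was shown above with $\eps_1$ effectively zero (no $\|x\|_\cV^2$ term is needed), so $\sum\eps_j<\alpha/4$ holds trivially, and with $\Psi=b$ integrable along $\rC([0,T];\cH)\cap\rL^2(0,T;\cV)$. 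By \autoref{prop: global-finite}, these give the global reference solution $u$, and \autoref{prop:DAmult-wp} gives the unique solution $v$ of \eqref{eq:model-data-stoch-u}, hence $\psi$ solving the error equation. Assumption \textbf{(A4)} was verified with $\kappa_u(t)=C(1+\|u(t)\|_\cV^2)$ and any $\eps_1\in(0,\alpha/4)$, using $\rH^1_0(I)\hookrightarrow\rL^\infty(I)$.

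\textbf{The step needing care: the averaged bound \eqref{eq:A4avg}.} I would test the equation with $u$, dropping $-\|u\|_4^4\le0$, to get
\[
\tfrac12\tfrac{\d}{\d t}\|u\|_2^2+\|\partial_x u\|_2^2\le \|u\|_2^2 .
\]
A bound independent of $t$ cannot come from Gr\"onwall directly. Instead, I would use $\|u\|_2^2\le\eta\|u\|_4^4+C_\eta$ before dropping the quartic term, which gives
\[
\tfrac{\d}{\d t}\|u\|_2^2+\alpha\|u\|_\cV^2\le C .
\]
This yields $\sup_t\|u(t)\|_2^2\le \|u_0\|_2^2+C$, and integrating over $[s,t]$ gives $\int_s^t\|u\|_\cV^2\le C(1+(t-s)+\|u_0\|_2^2)$. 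Hence \eqref{eq:A4avg} holds with $M_0=C$ and $M_1=C(1+\|u_0\|_2^2)$. Assumptions \textbf{(N1)} and \eqref{eq:Idelta-bound} are hypotheses of the corollary.

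\textbf{Conclusion.} \autoref{thm:stoch-H} then gives $\mu_0,\eta_0$ and the mean-square estimate. \autoref{cor:noise-floor} gives the noise-floor bound when $\Gamma_u<\infty$, and \autoref{thm:as-convergence-multiplicative} gives the almost sure uniform tail convergence under integrability of $\|G_\delta(u)\|_{\rL_2^0}^2$ on $(0,\infty)$.
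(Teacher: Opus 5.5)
Your proposal is correct and follows the same route as the paper: you verify \textbf{(A1)}--\textbf{(A4)} for the triple $\rH^1_0(I)\subset\rL^2(I)\subset\rH^{-1}(I)$ and then read off the three claims from \autoref{thm:stoch-H}, \autoref{cor:noise-floor} and \autoref{thm:as-convergence-multiplicative}. Your derivation of \eqref{eq:A4avg} via $\|u\|_2^2\le\eta\|u\|_4^4+C_\eta$ correctly supplies the energy estimate that the paper only asserts; alternatively, since $s\mapsto s^3$ is monotone, \eqref{eq:A4} already holds with $\kappa_u\equiv 1$, which would make that step trivial.
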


\subsection{Strong formulations}
We now revisit some of the previous models in a stronger functional setting. By selecting a more regular Gelfand triple, the same abstract theory yields convergence results in stronger norms than in the weak framework considered above.

\subsubsection{2D Navier-Stokes equations-revisited}
\label{ex: NSE-strong}\mbox{}\\
We consider the Navier-Stokes equations on the two-dimensional torus $\T^2$. We work with the Gelfand triple
$$
\cV:=\rH^2(\T^2; \mathbb{R}^2) \cap \rL^2_{\sigma,0}(\T^2),  \qquad  \cH:=\rH^1(\T^2;\mathbb R^2)\cap \rL^2_{\sigma,0}(\T^2),
\qquad \cV^\ast:=\rL^2_{\sigma,0}(\T^2),
$$
where $
\rL^2_{\sigma,0}(\T^2)
:= \{
u \in \rL^2(\T^2;\mathbb R^2):
\div u=0,\ \int_{\T^2}u(x)\d x=0 \}.$ Denote by $\P$ the Leray projection onto $\rL^2_{\sigma,0}(\T^2)$, and consider the Stokes operator $A \in \mathcal{L}(\cV, \cV^\ast)$ given by
$$
A:=-\P\Delta,\qquad D(A)=\cV.
$$
We endow $\cH$ and $\cV$ with the equivalent norms
$$
\|u\|_{\cH}:= \| A^{\frac{1}{2}} u  \|_2, \qquad
\|u\|_{\cV}:= \| Au \|_2,
$$
and we recall that the pairing $\langle \cdot ,\cdot \rangle_{\cV^\ast, \cV}$ is given by
$$
\langle f,\phi\rangle_{\cV^\ast,\cV}
:= (f,A\phi)_2, \qquad f\in \rL^2_{\sigma,0}(\T^2),\quad  \phi \in \cV.
$$
Define the bilinear map
$$
B(u,v):=-\P ((u \cdot \nabla) v ), \qquad u,v\in \cV_{\frac{3}{4}},
$$
and set
$$
F(u):=B(u,u)=-\P((u\cdot\nabla)u).
$$
The strong variational formulation of the two-dimensional Navier-Stokes equations reads as
\begin{equation}\label{eq: 2D NSE}\tag{strong-2D-NSE}
    \left\{
    \begin{aligned}
       u'+ A u&=F(u), \quad t\in(0,T),\\
       u(0)&=u_0.
    \end{aligned}
    \right.
\end{equation}
We first verify \textup{\bf(A1)}. By the Poincar\'e inequality on mean-zero
periodic vector fields,
$$
\langle Au,u\rangle_{\cV^\ast,\cV} =
(Au,Au)_2 =
\|Au\|_2^2 \geq \alpha\|u\|_{\rH^2}^2
$$
for some $\alpha>0$ and every $u\in\cV$.  Concerning \textup{\bf(A2)}, since
\[
\cV_{\frac34}=D(A^{\frac34})
=
\rH^{\frac32}(\T^2;\mathbb R^2)\cap \rL^2_{\sigma,0}(\T^2),
\]
and, in dimension two, $\rH^{\frac{3}{2}}(\T^2)\hookrightarrow \rL^\infty(\T^2)$, we have
\[
\|B(u,v)\|_{\cV^\ast}
=
\|\P((u\cdot\nabla)v)\|_2
\leq
\|u\|_\infty\|\nabla v\|_2
\leq
C\|u\|_{\rH^{\frac{3}{2}}}\|v\|_{\rH^{\frac{3}{2}}}.
\]
Thus $B:\cV_{\frac34}\times\cV_{\frac34}\to\cV^\ast$ is bounded and bilinear.
Since $F(u)=B(u,u)$, \autoref{rem: bilinear} implies that
\textup{\bf(A2)} holds with $m=1$, $\rho_1=1$, and $\beta_1= \frac34$.

We next verify \textup{\bf(A3)}-(i). For every $u\in\cV$, using periodic
boundary conditions, the divergence-free constraint, and the classical
two-dimensional cancellation identity,
$$
((u\cdot\nabla)u,Au)_2=0,
$$
see, for instance, \cite{Constantin_Foias_book}, we obtain
$$
\langle F(u), u\rangle_{\cV^\ast,\cV} =
-((u \cdot \nabla) u, Au)_2=0.
$$
Hence \textup{\bf(A3)}-(i) is satisfied.

%\noindent 
We finally verify \textup{\bf(A4)}. Let $u$ be the unique global solution to
\eqref{eq: 2D NSE}, whose existence follows from \textup{\bf(A1)}-\textup{\bf(A3)}, see \autoref{prop: global-finite}. For $\xi\in\cV$, by bilinearity of $B$ we have
$$
F(u)-F(u-\xi)= B( \xi , u )+B(u, \xi ) -B (\xi , \xi).
$$
Pairing with $\xi$ and using again the two-dimensional cancellation
$$
\langle B(\xi,\xi),\xi\rangle_{\cV^\ast,\cV} = -((\xi \cdot \nabla) \xi, A \xi )_2=0,
$$
we obtain
\begin{equation*}
\begin{aligned}
\langle F(u(t))-F(u(t)-\xi), \xi \rangle_{\cV^\ast,\cV} &= \langle B(\xi,u(t)), \xi \rangle_{\cV^\ast,\cV} + \langle B(u(t), \xi ), \xi \rangle_{\cV^\ast,\cV}
\\
&=-(\xi \cdot \nabla u(t), A \xi)_2 -(u(t) \cdot \nabla \xi , A \xi)_2.
\end{aligned}
\end{equation*}
Therefore, by H\"older's and Young's inequalities,
$$
\vert  (u(t) \cdot \nabla \xi, A\xi )_2\vert 
\leq \|u(t)\|_\infty \| \nabla\xi\|_2  \| A \xi \|_2  \leq \varepsilon \| A \xi \|_2^2
+ C_\varepsilon \| u(t) \|_\infty^2 \| \nabla \xi\|_2^2,
$$
while, using $\rH^1(\T^2)\hookrightarrow \rL^6(\T^2)$,
$$
\vert (\xi\cdot \nabla u(t) , A \xi)_2\vert 
\leq \| \xi \|_6 \| \nabla u(t) \|_3 \|A \xi\|_2  \leq \varepsilon \| A \xi\|_2^2
+ C_\varepsilon \| \nabla u(t)\|_3^2 \| \nabla \xi \|_2^2.
$$
Since $\|A\xi\|_2$ is equivalent to $\|\xi\|_{\cV}$ and
$\|\nabla\xi\|_2$ is equivalent to $\|\xi\|_{\cH}$, we infer that
$$
\langle F(u(t))-F(u(t)-\xi), \xi\rangle_{\cV^\ast,\cV} \leq \varepsilon\|\xi\|_{\cV}^2 + C_\varepsilon\kappa_u(t)\|\xi\|_{\cH}^2,
$$
where $ \kappa_u(t):=
\|u(t)\|_\infty^2+\|\nabla u(t)\|_3^2. $
Moreover, by the embeddings $ \rH^2(\T^2) \hookrightarrow \rL^\infty(\T^2),$ $
\rH^2(\T^2)\hookrightarrow  \rW^{1,3}(\T^2),$
we have $
\kappa_u(t)\leq C\|u(t)\|_{\rH^2}^2.$ The strong energy inequality gives
$$
\frac12\|u(t)\|_{\cH}^2 +\int_s^t \|u(r)\|_{\cV}^2 \d r
\leq \frac12\|u(s)\|_{\cH}^2\leq \frac{1}{2} \| u_0 \|_{\cH}^2, 
$$
for $0\leq s\leq t.$ Consequently,
$$
\int_s^t \kappa_u(r) \d r
\leq C\int_s^t \|u(r)\|_{\rH^2}^2 \d r
\leq C \| u_0 \|_{\cH}^2,
$$
and therefore \eqref{eq:A4avg} also holds. Hence \textup{\bf(A4)} is verified.

Assume \textup{\bf(N1)}, \eqref{eq:Idelta-bound}, and denote by $v$ the unique solution to the stochastic data assimilation problem
associated with \eqref{eq: 2D NSE}, whose existence is ensured by \autoref{prop:DAmult-wp}. Set $
w:=u-v.$ Then $w$ solves
\begin{equation}\label{eq:difference-strong-NSE}
    \left\{
    \begin{aligned}
        \d w_t + Aw_t\d t
        &=(F(u(t))-F(v_t)) \d t
        -\mu\,\rI_\delta w_t\d t -\mu G_\delta(u(t)) \d W_t^Q,
        \qquad t>0,\\
        w(0)&=w_0:=u_0-v_0.
    \end{aligned}
    \right.
\end{equation}
The following synchronisation
result follows from \autoref{thm:stoch-H}, \autoref{cor:noise-floor}, and
\autoref{thm:as-convergence-multiplicative}.

\begin{cor}[Synchronisation for the strong two-dimensional NSEs]
\label{cor:strong-NSE-synchronisation}
Let $\delta_0>0$ be as in \textup{\bf(N1)} and \eqref{eq:Idelta-bound}.
There exist $\mu_0,\eta_0>0$ such that, for every $\mu\ge\mu_0$ and
$\delta\in(0,\delta_0]$ with $\mu\delta^2\le\eta_0$, the solution
$w=u-v$ of \eqref{eq:difference-strong-NSE} satisfies
$$
    \E\|w_t\|_{\rH^1}^2
    \le
    C e^{-\gamma t}\|w_0\|_{\rH^1}^2
    + C\mu^2 \int_0^t e^{- \gamma (t-s)} \| G_{\delta}(u(s)) \|^2_{\rL^0_2} \d s,
    \qquad t \geq 0,
$$
for some positive $\gamma=\gamma(\mu,\delta)>0$ and $C>0$ independent of $t$. If $\Gamma_u:= \sup_{t \geq 0}\| G_{\delta} (u(t))\|_{\rL_2^0}^2<\infty$, then
$$
    \E \|w_t\|_{\rH^1}^2 \leq
    C e^{-\gamma t} \| w_0 \|_{\rH^1}^2
    + \frac{C\mu^2}{\gamma}\Gamma_u, \qquad
    \limsup_{t\to\infty} \E \| w_t \|_{\rH^1}^2
    \leq \frac{C\mu^2}{\gamma}\Gamma_u .
$$
Finally, if $\int_0^\infty \|G_\delta(u(t))\|_{\rL_2^0}^2 \d t<\infty,$
then
$$
    \sup_{t \geq N}\| w_t\|_{\rH^1} \to 0,
    \qquad \mathbb{P} \text{-a.s. as }N \to \infty .
$$
\end{cor}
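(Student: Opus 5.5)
The corollary follows from \autoref{thm:stoch-H}, \autoref{cor:noise-floor} and \autoref{thm:as-convergence-multiplicative} once the strong Gelfand triple is admissible. The plan is to check the abstract hypotheses in this setting and then read off the three conclusions, using that $\|\cdot\|_{\cH}$ is equivalent to the $\rH^1$-norm on $\rH^1\cap\rL^2_{\sigma,0}(\T^2)$.

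\emph{Step 1: the triple.} I would first check that $(\cV,\cH,\cV^\ast)=(D(A),D(A^{1/2}),\rL^2_{\sigma,0})$ is a Gelfand triple with respect to the pairing $\langle f,\phi\rangle=(f,A\phi)_2$. Concretely, this means $\langle f,\phi\rangle_{\cV^\ast,\cV}=(f,\phi)_{\cH}$ for $f\in\cH$, which is the identity $(A^{1/2}f,A^{1/2}\phi)_2=(f,A\phi)_2$. I would also check that $(\cV^\ast,\cV)_{1/2,2}=D(A^{1/2})=\cH$, which holds because $A$ is a positive self-adjoint operator on $\rL^2_{\sigma,0}$ with bounded imaginary powers. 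For the same reason $\cV_\beta=D(A^\beta)=\rH^{2\beta}\cap\rL^2_{\sigma,0}$, which is the identification already used for $\beta=\frac34$. Separability of $\cH$, required in \textbf{(N1)}, is clear.

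\emph{Step 2: the structural assumptions.} \textbf{(A1)}--\textbf{(A4)} were verified above for \eqref{eq: 2D NSE}, with $\beta_1=\frac34$, $\rho_1=1$, $\eps$ arbitrarily small, and $\kappa_u\lesssim\|u\|_{\rH^2}^2$ having bounded integral, so $M_0=0$. The constraint $(2\beta_1-1)(\rho_1+1)=1\le1$ holds. The conditions \textbf{(N1)} and \eqref{eq:Idelta-bound} are assumed in the statement, now read in the strong triple. In particular, \eqref{eq:Idelta-bound} means $|(f-\rI_\delta f,A g)_2|\le C_\rI\delta\|f\|_{\rH^1}\|g\|_{\rH^2}$, the natural estimate for an interpolant bounded from $\rH^1$ into $\rL^2$.

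\emph{Step 3: well-posedness and conclusions.} Well-posedness of $u$ and $v$ comes from \autoref{prop: global-finite} and \autoref{prop:DAmult-wp}, whose assumptions are exactly those of Steps 1--2. Then $w=u-v$ solves \eqref{eq:difference-strong-NSE}, and:
\begin{enumerate}
\item[(i)] \autoref{thm:stoch-H} gives the first estimate in $\|\cdot\|_{\cH}$; the norm equivalence converts it to $\rH^1$, enlarging $C$.
\item[(ii)] \autoref{cor:noise-floor} gives the noise-floor bound and the $\limsup$ statement under $\Gamma_u<\infty$.
\item[(iii)] \autoref{thm:as-convergence-multiplicative} gives $\sup_{t\ge N}\|w_t\|_{\cH}\to0$ almost surely, and norm equivalence again yields the $\rH^1$ statement.
\end{enumerate}

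The only step I expect to need care is Step 1, specifically the interpolation identity $(\cV^\ast,\cV)_{1/2,2}=\cH$ and the identification of the complex interpolation spaces $\cV_\beta$. I would settle both by diagonalising $A$ in the Fourier basis on $\T^2$: interpolation of weighted $\ell^2$ spaces gives $D(A^\theta)$ for both real $(\cdot,\cdot)_{\theta,2}$ and complex $[\cdot,\cdot]_\theta$ methods, and hence the required identities. Everything else is bookkeeping.
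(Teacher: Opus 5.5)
Your proposal is correct and follows the paper's route. You verify \textbf{(A1)}--\textbf{(A4)} in the strong triple, as done in the text preceding the corollary. You then read off the three conclusions from \autoref{thm:stoch-H}, \autoref{cor:noise-floor} and \autoref{thm:as-convergence-multiplicative}, passing between $\|\cdot\|_{\cH}=\|A^{1/2}\cdot\|_2$ and the $\rH^1$-norm by Poincar\'e's inequality. Your explicit checks of the pairing identity $(f,A\phi)_2=(f,\phi)_{\cH}$ and of $(\cV^\ast,\cV)_{1/2,2}=\cH$, $\cV_\beta=D(A^\beta)$ via Fourier diagonalisation fill in details the paper leaves implicit.
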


\subsubsection{1D Allen-Cahn equation-revisited}\mbox{}\\
We consider the initial boundary value problem for the Allen-Cahn equation on the real interval $I=(0,1)$. We work with the Gelfand triple
$$
\cV:=\rH^2(I)\cap \rH^1_0(I), \qquad \cH:=\rH^1_0(I), \qquad \cV^\ast:=\rL^2(I).
$$
and let $A=-\partial_{xx} \in \mathcal{L} (\cV, \cV^\ast )$ be the negative Dirichlet Laplacian on $\rL^2(I)$. We endow $\cV, \cH$ with the equivalent norms
$$
\|u\|_{\cV}:=\|Au\|_2=\|\partial_{xx}u\|_2, \qquad
\|u\|_{\cH}:=\|A^{\frac{1}{2}}u\|_2=\|\partial_xu\|_2.
$$
The duality pairing between $\cV^*$ and $\cV$ is defined by
$$
\langle f,\phi\rangle_{\cV^*,\cV}:=(f,A\phi)_2
=(f,-\partial_{xx}\phi)_2,
\qquad f\in \rL^2(I),\ \phi\in\cV.
$$
The initial boundary value problem for the one-dimensional Allen-Cahn equation reads as
\begin{equation}\label{eq: 1D Allen-Cahn}\tag{1D Allen-Cahn}
    \left\{
    \begin{aligned}
       u'-\partial_{xx} u&=F(u) , \quad t \in (0,T) ,\\
        u(0)&=u_0,
    \end{aligned}
    \right.
\end{equation}
where $F(u):=u-u^3$. We first verify $\bf(A1)$. For every $u\in\cV$,
$$
\langle Au,u\rangle_{\cV^*,\cV} =(Au,Au)_2 =\|u\|_{\cV}^2,
$$
so $\bf(A1)$ holds with $\alpha=1$. Concerning $\bf (A2)$, for all $u,v\in \cV$ we have
\begin{equation*}
    \begin{split}
        \| F(u) - F(v) \|_2 &= \| u -v - (u^3 - v^3) \|_2 \leq \| u-v \|_2 + \| (u-v) (u^2 +uv+v^2) \|_2 \\
        &\leq C (1+ \| u\|_\infty^2 + \|v\|_\infty^2) \| u-v\|_{\cV_\beta}
    \end{split}
\end{equation*}
Since in one dimension it holds $\cV_{\beta} = [\cV^\ast, \cV]_{\beta} \hookrightarrow\rH^{2\beta}(I) \hookrightarrow \rL^\infty(I)$ for $\beta > \frac{1}{4},$ then $\bf (A2)$ is satisfied with $m =1$, $\rho = \rho_1 = 2$, and for any $\beta= \beta_1 \in (\frac{1}{2}, \frac{2}{3}],$ because \eqref{eq:A3_coefficients_bound} is equivalent to $\beta \leq \frac{2}{3}.$

We now verify $\bf(A3)$. Let $x,y\in\cV$ and set $ s:=x+y. $
Using the duality pairing and integrating by parts, we obtain
\begin{equation*}
    \begin{split}
        \langle F(x+y) , x \rangle_{\cV^*,\cV} &=(s-s^3, Ax)_2                                              = \int_0^1 (s-s^3 )(-x'') \d r                                =\int_0^1 (1-3s^2)s'x' \d r                                      \\
&=\| x \|_{\cH}^2+(y,x)_{\cH}
   -3\int_0^1 s^2|x'|^2 \d r
   -3\int_0^1 s^2 y' x' \d r .
    \end{split}
\end{equation*}
The last two terms are treated by completing the square
$$
-3s^2|x'|^2-3s^2y'x'=
-3s^2  ( x'+\frac{1}{2}y' )^2
+\frac34s^2 (y')^2                                      \leq
\frac{3}{4} s^2 |y'|^2 .
$$
Therefore, recalling that $s = x+y$, we obtain
$$
\begin{aligned}
\langle F(x+y),x\rangle_{\cV^*,\cV} \leq
\|x\|_{\cH}^2
+ |(y,x)_{\cH}|
+ C\int_0^1 |x+y|^2|y'|^2 \d r                                     \leq
C\|x\|_{\cH}^2
+C\|y\|_{\cH}^2
+C\|x+y\|_\infty^2\|y\|_{\cH}^2 .
\end{aligned}
$$
Since in one space dimension it holds the embedding $
\cH=\rH^1_0(I)\hookrightarrow \rL^\infty(I),$
it follows 
\[
\|x+y\|_\infty^2
\leq
C\bigl(\|x\|_{\cH}^2+\|y\|_{\cH}^2\bigr).
\]
Hence
\[
\begin{aligned}
\langle F(x+y),x\rangle_{\cV^*,\cV}
&\leq
C\|x\|_{\cH}^2
+C\|y\|_{\cH}^2
+C\bigl(\|x\|_{\cH}^2+\|y\|_{\cH}^2\bigr)\|y\|_{\cH}^2                 \leq
C\bigl(1+\|y\|_{\cH}^2\bigr)\|x\|_{\cH}^2
+C\bigl(\|y\|_{\cH}^2+\|y\|_{\cH}^4\bigr).
\end{aligned}
\]
Thus $\bf(A3)$ holds with $
\Psi(y):=
C\bigl(\|y\|_{\cH}^2+\|y\|_{\cH}^4\bigr).$
In particular, if $ z\in \rC([0,t];\cH)\cap \rL^2(0,t;\cV),$
then $ \Psi(z(\cdot))\in \rL^1(0,t)$.

We finally verify the validity of $\bf(A4)$ by proving a stronger one-sided
increment estimate. Let $x,y\in\cV$. Since
$$
F(x+y)-F(x)=y-3x^2y-3xy^2-y^3,
$$
we have, using the duality pairing and integrating by parts,
\begin{equation*}
    \begin{split}
        \langle F(x+y)-F(x),y\rangle_{\cV^*,\cV}
&=(F(x+y)-F(x),Ay)_2                               =\int_0^1 \partial_r (F(x+y)-F(x) )y' \d r             \\
&=\|y\|_{\cH}^2 -3\int_0^1 (x+y)^2|y'|^2 \d r                     -3\int_0^1 y(2x+y)x'y' \d r .
    \end{split}
\end{equation*}
Since $ 2x+y=(x+y)+x,$
we can write
$$
-3\int_0^1 y(2x+y)x'y' \d r = -3\int_0^1 y(x+y)x'y' \d r -3\int_0^1 xyx'y'\,\d r .
$$
The first term is absorbed by the negative cubic contribution. Indeed, by
Young's inequality,
$$
3|y|\, |x+y|\,|x'|\,|y'|
\leq \frac{3}{2} (x+y)^2|y'|^2 +C|x'|^2|y|^2.
$$
Hence
$$
-3\int_0^1 (x+y)^2|y'|^2 \d r -3\int_0^1 y(x+y)x'y' \d r                         \leq C\int_0^1 |x'|^2|y|^2 \d r .
$$
For the remaining term, again by Young's inequality,
$$
3|x|\,|y|\,|x'|\,|y'|\leq \frac{1}{2} |y'|^2
+
C|x|^2|x'|^2|y|^2.
$$
Consequently,
$$
\langle F(x+y)-F(x),y\rangle_{\cV^*,\cV} \leq
C\|y\|_{\cH}^2 +C\int_0^1 |x'|^2|y|^2 \d r
+C\int_0^1 |x|^2|x'|^2|y|^2\,\d r .
$$
Using the one-dimensional estimates
$$
\|x\|_\infty\leq C\|x\|_{\cH},
\qquad
\|x'\|_\infty\leq C\|x\|_{\cV},
\qquad
\|y\|_2\leq C\|y\|_{\cH},
$$
we infer
$$
\langle F(x+y)-F(x),y\rangle_{\cV^*,\cV}
\leq
C\|y\|_{\cH}^2
+C\|x\|_{\cV}^2\|y\|_{\cH}^2
+C\|x\|_{\cH}^2\|x\|_{\cV}^2\|y\|_{\cH}^2           \leq
C (1+\widetilde \Psi(x))\|y\|_{\cH}^2,
$$
where $ \widetilde \Psi(x):=(1+\|x\|_{\cH}^2)\|x\|_{\cV}^2 .$
Therefore, taking $x=u(t)$ and $y=-\xi$, we obtain
$$
\langle F(u(t))-F(u(t)-\xi),\xi\rangle_{\cV^*,\cV}
= \langle F(u(t)-\xi)-F(u(t)),-\xi\rangle_{\cV^*,\cV}        \leq
C (1+ \widetilde \Psi(u(t)))\|\xi\|_{\cH}^2 .
$$
Thus \eqref{eq:A4} holds  with $\kappa_u(t):=C(1+\widetilde\Psi(u(t))).$

It remains to verify \eqref{eq:A4avg}. Testing
\eqref{eq: 1D Allen-Cahn} against $Au$ gives
$$
\frac12\frac{\d}{\d t}\|u(t)\|_{\cH}^2+\|u(t)\|_{\cV}^2 = (F(u(t)),Au(t))_2 .
$$
Moreover,
$$
(F(u),Au)_2 =
(u-u^3,Au)_2
= \|u\|_{\cH}^2-3\int_0^1 u^2|u'|^2\,\d r
\leq \|u\|_{\cH}^2.
$$
By the Poincar\'e inequality,
$$
\|u\|_{\cH}^2\leq \lambda_1^{-1}\|u\|_{\cV}^2,
\qquad \lambda_1=\pi^2,
$$
and therefore
$$
\frac{1}{2}\frac{\d}{\d t}\|u(t)\|_{\cH}^2
+ (1-\lambda_1^{-1} ) \| u(t) \|_{\cV}^2 \leq 0.
$$
Hence $
\sup_{t\geq0}\|u(t)\|_{\cH}^2 \leq \|u_0\|_{\cH}^2 $
and
$$
\int_s^t \|u(r)\|_{\cV}^2 \d r \leq C\|u(s)\|_{\cH}^2
\leq C\|u_0\|_{\cH}^2, \qquad 0\leq s\leq t.
$$
Consequently,
$$
\int_s^t \kappa_u(r) \d r\leq C(t-s) +C(1+\sup_{r\geq0}\|u(r)\|_{\cH}^2) \int_s^t \|u(r)\|_{\cV}^2 \d r                               \leq C(t-s)+C_{u_0},
$$
for any $0 \leq s \leq t$.
This proves \eqref{eq:A4avg}.

Assume moreover that \textup{\bf(N1)} and \eqref{eq:Idelta-bound} hold. Let $v$ be the solution to the data assimilation problem related to
\eqref{eq: 1D Allen-Cahn}. Setting $w:=u-v$, we have
\begin{equation}\label{eq:difference-strong-AC}
    \left\{
    \begin{aligned}
        \d w_t + Aw_t \d t
        &=(F(u(t))-F(v_t))\d t-\mu\,\rI_\delta w_t \d t-\mu G_\delta(u(t)) \d W_t^Q,
        \qquad t>0,\\
        w(0)&=w_0:=u_0-v_0.
    \end{aligned}
    \right.
\end{equation}
From \autoref{thm:stoch-H}, \autoref{cor:noise-floor}, and
\autoref{thm:as-convergence-multiplicative}, we obtain the following convergence result.
\begin{cor}[Synchronisation for the strong one-dimensional Allen-Cahn equation]
\label{cor:strong-AC-synchronisation}
Let $\delta_0>0$ be as in \textup{\bf(N1)} and \eqref{eq:Idelta-bound}.
There exist $\mu_0,\eta_0>0$ such that, for every $\mu\ge\mu_0$ and
$\delta\in(0,\delta_0]$ with $\mu\delta^2\le\eta_0$, the solution
$w=u-v$ of \eqref{eq:difference-strong-AC} satisfies
$$
    \E\|w_t\|_{\rH^1}^2
    \le
    C e^{-\gamma t}\|w_0\|_{\rH^1}^2
    + C\mu^2 \int_0^t e^{- \gamma (t-s)} \| G_{\delta}(u(s)) \|^2_{\rL^0_2} \d s,
    \qquad t \geq 0,
$$
for some constants $\gamma=\gamma(\mu,\delta)>0$ and $C>0$ independent of $t$. If $\Gamma_u:= \sup_{t \geq 0}\| G_{\delta} (u(t))\|_{\rL_2^0}^2<\infty$, then
$$
    \E \|w_t\|_{\rH^1}^2 \leq
    C e^{-\gamma t} \| w_0 \|_{\rH^1}^2
    + \frac{C\mu^2}{\gamma}\Gamma_u, \qquad
    \limsup_{t\to\infty} \E \| w_t \|_{\rH^1}^2
    \leq \frac{C\mu^2}{\gamma}\Gamma_u .
$$
Finally, if $\int_0^\infty \|G_\delta(u(t))\|_{\rL_2^0}^2 \d t<\infty,$
then
$$
    \sup_{t \geq N}\|w_t\|_{\rH^1} \to 0,
    \qquad \mathbb{P} \text{-a.s. as }N \to \infty .
$$
\end{cor}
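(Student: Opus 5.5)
The corollary is obtained by specialising \autoref{thm:stoch-H}, \autoref{cor:noise-floor} and \autoref{thm:as-convergence-multiplicative} to the strong Gelfand triple $\cV=\rH^2(I)\cap\rH^1_0(I)$, $\cH=\rH^1_0(I)$, $\cV^\ast=\rL^2(I)$. The hypotheses \textbf{(A1)}--\textbf{(A4)} have just been verified above: \textbf{(A1)} with $\alpha=1$; \textbf{(A2)} with $m=1$, $\rho_1=2$, $\beta_1\in(\frac12,\frac23]$; \textbf{(A3)}-(ii) with $\Psi(y)=C(\|y\|_\cH^2+\|y\|_\cH^4)$; and \textbf{(A4)} with $\kappa_u=C(1+\widetilde\Psi(u))$ together with \eqref{eq:A4avg}. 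So the plan is to check the remaining structural requirements, invoke the abstract theorems, and translate norms.

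First I will confirm the framework conditions. The interpolation identity $(\cV^\ast,\cV)_{\frac12,2}=\cH$ holds because $A=-\partial_{xx}$ is positive self-adjoint on $\rL^2(I)$ with $\cV=D(A)$. Hence the real interpolation space is $D(A^{1/2})=\rH^1_0(I)$, with norm $\|A^{1/2}\cdot\|_2=\|\partial_x\cdot\|_2$, which is exactly the chosen norm on $\cH$.

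The pairing $\langle f,\phi\rangle=(f,A\phi)_2$ must be compatible with the $\cH$ inner product. For $f\in\cH$ and $\phi\in\cV$, integration by parts gives $(f,-\phi'')_2=(f',\phi')_2=(f,\phi)_\cH$, so the Gelfand triple structure holds. The sign condition in \textbf{(A3)} requires $\eps_1<\frac{\alpha}{4}=\frac14$. This is satisfied since the estimates above used no $\|x\|_\cV^2$ term, i.e.\ $\eps_1=0$, which can be replaced by any small positive $\eps_1$. \textbf{(N1)} and \eqref{eq:Idelta-bound} (now in the form $\langle f-\rI_\delta f,g\rangle\le C_\rI\delta\|f\|_{\rH^1}\|g\|_{\rH^2}$) are assumed. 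Well-posedness of $u$ and $v$ then follows from \autoref{prop: global-finite} and \autoref{prop:DAmult-wp}.

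Next I apply \autoref{thm:stoch-H} to the error $w=u-v$, which solves \eqref{eq:difference-strong-AC} (identical to \eqref{eq:model-difference-stoch}). This yields $\mu_0$ and $\eta_0$ and the estimate \eqref{eq:main-stoch-est} in $\|\cdot\|_\cH=\|\partial_x\cdot\|_2$. Poincaré's inequality on $\rH^1_0(I)$ makes this norm equivalent to $\|\cdot\|_{\rH^1}$, so absorbing the equivalence constants into $C$ gives the first displayed bound.

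The noise-floor bounds under $\Gamma_u<\infty$ follow from \autoref{cor:noise-floor}, using $\int_0^te^{-\gamma(t-s)}\d s\le\gamma^{-1}$. The almost sure statement follows from \autoref{thm:as-convergence-multiplicative}, with the same norm equivalence.

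The only step requiring care is confirming that \textbf{(A3)} and \textbf{(A4)} were verified with constants compatible with the smallness requirement $\sum\eps_j<\alpha/4$. Since both bounds have zero $\|\cdot\|_\cV^2$ coefficient, this is immediate. Everything else is bookkeeping of norm equivalences.
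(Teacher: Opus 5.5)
Your proposal is correct and follows the paper's route. The paper likewise verifies (A1)--(A4) for the triple $\rH^2(I)\cap\rH^1_0(I)\hookrightarrow\rH^1_0(I)\hookrightarrow\rL^2(I)$ and then invokes the mean-square theorem, the noise-floor corollary and the almost-sure theorem, using that the $\cH$-norm $\|\partial_x\cdot\|_2$ is equivalent to the $\rH^1$-norm by Poincar\'e. The paper leaves implicit the identity $(\cV^\ast,\cV)_{\frac12,2}=\cH$, the compatibility of the pairing $(f,A\phi)_2$ with the $\cH$ inner product, and the choice of an arbitrarily small $\eps_1>0$; your checks of these points are correct.
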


\section{Preparatory well-posedness results}
\label{sec: preparations}
\noindent
In this section we prepare the proofs of the main results. We first show that, under $\bf{(A1)-(A3)}$, the deterministic reference problem \eqref{eq:model} is globally well posed on every finite time interval. We then turn to the stochastic data-assimilation system and reduce it, via the Da Prato-Debussche decomposition, to a pathwise random PDE. This allows us to transfer the deterministic well-posedness argument to the stochastic setting.

Based on the assumptions $\bf(A1)-(A2)$, we recall the following result on the local well-posedness of \eqref{eq:model}, see \cite[Theorem 18.2.6 and Theorem 18.2.17]{Veraar3}.
\begin{lem}\label{lem: local}
    Let $T>0$ and assume that $\bf(A1)-\bf(A2)$ are satisfied. Then for any $u_0 \in \cH$, there exists $a = a(u_0)\leq T$ such that problem \eqref{eq:model} admits a unique solution
    \[
        u \in \rL^2(0,a;\cV) \cap \rH^1(0,a;\cV^\ast) \cap \mathrm{BUC}([0,a];\cH).
    \]
    The solution exists on a maximal time interval $[0,a_{\mathrm{max}}(u_0))$ and depends continuously on the data. If the solution does not exist globally in time, i.e., if $a_{\mathrm{max}} <T$, then the maximal existence time is characterized by the blow-up
    \begin{equation}\label{eq: blow up}
          \lim\limits_{t \to a_{\mathrm{max}}} \| u \|_{ \rL^2(0,t;\cV) \cap \rH^1(0,t;\cV^\ast) } = \infty.
    \end{equation}
\end{lem}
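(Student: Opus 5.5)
Lemma~\ref{lem: local} is quoted from \cite[Theorem 18.2.6 and Theorem 18.2.17]{Veraar3}, so the work is to check that our setting satisfies the hypotheses there. That framework treats quasi- or semilinear equations $u'+Au=F(u)$ in the critical $\rL^p$-setting with $p=2$ and trivial weight. It needs four ingredients: maximal $\rL^2$-regularity of $A$ on $\cV^\ast$; the trace space $(\cV^\ast,\cV)_{1/2,2}=\cH$; local Lipschitz estimates of $F$ with respect to the complex interpolation spaces $[\cV^\ast,\cV]_{\beta_j}$; and the (sub)criticality condition on $(\beta_j,\rho_j)$.

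The first step is maximal regularity. By \textbf{(A1)} and the Lax--Milgram theorem, $A\colon\cV\to\cV^\ast$ is an isomorphism associated with a bounded coercive form on the Gelfand triple. By Lions' theorem, $A$ therefore has maximal $\rL^2$-regularity on $\cV^\ast$ with domain $\cV$; equivalently, one can use analyticity on a Hilbert space as in Remark~\ref{rem:A-stoch-max-reg}(i). We also use the standard identification
\[
\rH^1(0,a;\cV^\ast)\cap\rL^2(0,a;\cV)\hookrightarrow \mathrm{BUC}([0,a];(\cV^\ast,\cV)_{\frac12,2}) = \mathrm{BUC}([0,a];\cH),
\]
which holds by the standing assumption on the real interpolation space. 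This justifies the solution class in the statement.

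The second step is the nonlinearity. Assumption \textbf{(A2)} is precisely the structural condition of \cite{Veraar3}, namely $\|F_j(u)-F_j(v)\|_{\cV^\ast}\lesssim(1+\|u\|_{\beta_j}^{\rho_j}+\|v\|_{\beta_j}^{\rho_j})\|u-v\|_{\beta_j}$. The critical-weight condition there, for $p=2$ and $\kappa=0$, reads $\frac{1}{2}\ge \frac{\rho_j+1}{\rho_j}(\beta_j-\frac{1}{2})\cdot$ (after normalisation); we need to check this against \eqref{eq:A3_coefficients_bound}, that is, $(2\beta_j-1)(\rho_j+1)\le1$. A direct verification goes as follows. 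By the mixed derivative theorem and interpolation, $\rL^2(0,a;\cV)\cap\rH^1(0,a;\cV^\ast)\cap \rL^\infty(0,a;\cH)\hookrightarrow \rL^{r}(0,a;\cV_{\beta})$ with $r=\frac{2}{2\beta-1}$, using $\|x\|_{\cV_\beta}\lesssim\|x\|_\cH^{2-2\beta}\|x\|_\cV^{2\beta-1}$. Then Hölder's inequality shows $F_j(u)\in\rL^2(0,a;\cV^\ast)$ exactly when $(\rho_j+1)/r\le 1/2$, which is \eqref{eq:A3_coefficients_bound}. In the critical case of equality, smallness comes from shrinking $a$ via absolute continuity of the integral rather than from a power of $a$. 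Local existence and uniqueness then follow from a contraction in the space $\rL^2(0,a;\cV)\cap\rH^1(0,a;\cV^\ast)$ around the linear solution $e^{-tA}u_0$. Continuous dependence on $u_0$ follows from the same fixed-point estimate.

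The last step is the maximal solution and the blow-up criterion. We extend solutions by restarting from $u(a)\in\cH$ and define $a_{\max}$ as the supremum of existence times. Suppose $a_{\max}<T$ while the norm in \eqref{eq: blow up} stays bounded. Then $u$ extends to $\mathrm{BUC}([0,a_{\max}];\cH)$. Since the norm stays bounded, $F(u)\in\rL^2(0,a_{\max};\cV^\ast)$, and maximal regularity gives a limit $u(a_{\max})$. The main obstacle is that in the critical case the local time $a(u_0)$ is not uniform on bounded sets of $\cH$, so restarting from $u(a_{\max})$ is not immediate. Here I would rely on the result in \cite{Veraar3}, which shows that existence times are uniform on compact subsets of $\cH$. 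Since $\{u(t):t\le a_{\max}\}$ is compact by continuity, this allows extension beyond $a_{\max}$, a contradiction. This yields \eqref{eq: blow up}.
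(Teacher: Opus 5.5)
Your proposal is correct and takes the same route as the paper, which simply quotes \cite[Theorem 18.2.6 and Theorem 18.2.17]{Veraar3}; your direct contraction sketch around the linear solution is essentially the argument the paper itself carries out for the non-autonomous variant in \autoref{lemma: new non autonomous lemma local existence}. Two small corrections. First, for $p=2$ and trivial weight the criticality condition reads $\frac12\le\frac{\rho_j+1}{\rho_j}(1-\beta_j)$, which is equivalent to \eqref{eq:A3_coefficients_bound}, whereas your displayed version is equivalent to $(2\beta_j-1)(\rho_j+1)\le\rho_j$; this is harmless, since your direct H\"older check yields the right condition. Second, in the blow-up step there is no real obstacle: boundedness of the $\rL^2(0,t;\cV)\cap\rH^1(0,t;\cV^\ast)$-norm already gives $u(a_{\max})\in\cH$, so restarting from $u(a_{\max})$ with any positive local time and gluing contradicts maximality directly, with no uniformity of existence times needed.
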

\noindent
Adding the further non-blow up assumption \textbf{(A3)}, the first consequence is that the deterministic reference problem is globally well posed on every finite time interval.
\begin{prop}[Global well-posedness on finite time intervals]\label{prop: global-finite}
Assume that $\bf(A1)$--$\bf(A3)$ hold. Then for every $u_0\in \cH$ and every $T>0$, the problem \eqref{eq:model} admits a unique solution
\[
    u \in \rL^2(0,T;\cV)\cap \rH^1(0,T;\cV^\ast)\cap \mathrm{BUC}([0,T];\cH).
\]
In particular, \eqref{eq:model} is globally well-posed on every finite time interval.
\end{prop}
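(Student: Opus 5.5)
The plan is to combine the local theory of \autoref{lem: local} with an a priori energy bound coming from \textbf{(A3)}, which rules out the blow-up \eqref{eq: blow up}. Fix $u_0\in\cH$ and $T>0$. By \autoref{lem: local} there is a unique maximal solution on $[0,a_{\max})$. Suppose, for contradiction, that $a_{\max}<T$. The goal is then to show that
\[
\sup_{t<a_{\max}}\|u\|_{\rL^2(0,t;\cV)\cap\rH^1(0,t;\cV^\ast)}<\infty ,
\]
which contradicts \eqref{eq: blow up}. Uniqueness on $[0,T]$ follows from the uniqueness part of \autoref{lem: local}.

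\textbf{Step 1 (energy identity).} Since $u\in\rL^2(0,t;\cV)\cap\rH^1(0,t;\cV^\ast)$, the Lions--Magenes lemma for the Gelfand triple gives $\frac12\frac{\d}{\d t}\|u\|_\cH^2=\langle u',u\rangle$. Testing \eqref{eq:model} with $u$ and using \textbf{(A1)} we get
\[
\tfrac12\tfrac{\d}{\d t}\|u\|_\cH^2+\alpha\|u\|_\cV^2\le \sum_j\langle F_j(u),u\rangle .
\]

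\textbf{Step 2 (bounding the nonlinear pairing).} The indices $j$ with $\beta_j\ge\frac34$ are handled directly by \eqref{eq:A3diag}. For those with $\beta_j<\frac34$, apply \eqref{eq:A3shift} with $y=0$, $x=u$; this gives
\[
\langle F_j(u),u\rangle\le\eps_j\|u\|_\cV^2+C_j^{(0)}(1+\Psi_j(0))\|u\|_\cH^2+C_j^{(1)}\Psi_j(0).
\]
The shift structure with a general $y$ is only needed later for the stochastic problem, but here $y=0$ suffices. Since $\sum_j\eps_j<\alpha/4$, the resulting inequality is
\[
\tfrac{\d}{\d t}\|u\|_\cH^2+\tfrac{3\alpha}{2}\|u\|_\cV^2\le C_1\|u\|_\cH^2+C_0 .
\]
Gronwall's inequality then yields $\sup_{t<a_{\max}}\|u(t)\|_\cH^2+\int_0^{a_{\max}}\|u\|_\cV^2\le C(T,\|u_0\|_\cH)$.

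\textbf{Step 3 (controlling $u'$ in $\rL^2(0,t;\cV^\ast)$).} This is the main obstacle. We have $u'=-Au+F(u)$, and $Au$ is bounded in $\rL^2(\cV^\ast)$ by Step 2. For $F$, use \eqref{eq:A2_Fj_bound} with $v=0$, which gives
\[
\|F_j(u)\|_{\cV^\ast}\lesssim(1+\|u\|_{\cV_{\beta_j}}^{\rho_j})\|u\|_{\cV_{\beta_j}}+\|F_j(0)\|_{\cV^\ast}.
\]
Interpolation with $(\cV^\ast,\cV)_{1/2,2}=\cH$ gives $\|u\|_{\cV_{\beta}}\lesssim\|u\|_\cH^{2-2\beta}\|u\|_\cV^{2\beta-1}$. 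Hence $\|F_j(u)\|_{\cV^\ast}^2\lesssim 1+\|u\|_\cH^{c}\|u\|_\cV^{2(2\beta_j-1)(\rho_j+1)}$, and the exponent on $\|u\|_\cV$ is at most $2$ precisely by \eqref{eq:A3_coefficients_bound}. So $\int_0^t\|F(u)\|_{\cV^\ast}^2\d s$ is bounded uniformly in $t<a_{\max}$ by the bounds from Step 2.

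Together these give a uniform bound on $\|u\|_{\rL^2(0,t;\cV)\cap\rH^1(0,t;\cV^\ast)}$, contradicting \eqref{eq: blow up}. Hence $a_{\max}\ge T$, and $u\in\mathrm{BUC}([0,T];\cH)$ follows from the embedding of $\rL^2(\cV)\cap\rH^1(\cV^\ast)$ into $\rC([0,T];\cH)$. Some care is needed in the critical case $(2\beta_j-1)(\rho_j+1)=1$, where the bound is uniform but not small; this is still enough because only boundedness is required.
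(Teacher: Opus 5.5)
Your proposal is correct and follows essentially the same route as the paper. Both arguments use the local theory of \autoref{lem: local}, an energy estimate based on \textbf{(A3)}-(i) and \textbf{(A3)}-(ii) with $y=0$ followed by Gronwall, and then a bound on $F(u)$ in $\rL^2(0,a_{\max};\cV^\ast)$ via \eqref{eq:A2_Fj_bound} with $v=0$, interpolation and \eqref{eq:A3_coefficients_bound}, which excludes the blow-up \eqref{eq: blow up}. The differences are cosmetic: you phrase it as a contradiction argument, and you justify the energy identity explicitly through the Lions--Magenes lemma, which the paper uses implicitly.
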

\begin{proof}[Proof of \autoref{prop: global-finite}]
By \autoref{lem: local}, there exists a unique maximal solution $u$
on some interval $[0,a_{\max})\subseteq [0,T]$. Taking inner products of \eqref{eq:model} by $u$ in $\cV$ and using $\bf(A1)$, we obtain
\[
\frac12 \frac{\d}{\d t} \|u(t)\|_{\cH}^2 + \alpha\|u(t)\|_{\cV}^2
=
\langle F(u(t)),u(t)\rangle_{\cV^\ast,\cV}.
\]
Since $F=\sum_{j=1}^m F_j$, we estimate the components separately. For those $j$ with $\beta_j\geq \frac34$, we use \textup{\bf(A3)}-(i), while for those
$j$ with $\beta_j<\frac34$, we use \textup{\bf(A3)}-(ii) with $y=0$. Hence, setting $\varepsilon_* := \sum_{j=1}^m \eps_j$,
\[
    \langle F(u(t)),u(t)\rangle_{\cV^\ast,\cV}
    \leq \eps_* \|u(t)\|_\cV^2
    + C\|u(t)\|_\cH^2
    + C .
\]
Since $\eps_* \in (0 , \tfrac{\alpha}{4})$ by \textup{\bf (A3)}, it follows that
\[
    \frac12\frac{\d}{\d t}\|u(t)\|_\cH^2
    +c_0\|u(t)\|_\cV^2
    \le
    C\|u(t)\|_\cH^2+C,
\]
with $c_0 := \alpha - \eps_*>0. $ Gronwall's lemma gives $u\in \rL^\infty(0,a_{\max};\cH)\cap \rL^2(0,a_{\max};\cV)$
with bounds depending only on $T$, $\|u_0\|_{\cH}$ and the constants in $\bf(A1)$ and $\bf(A3)$. To show that $u'\in \rL^2(0,a_{\max};\cV^\ast)$ we write 
\begin{equation*}
    u' = -Au + F(u)
\end{equation*}
and notice that $Au\in \rL^2(0,a_{\max};\cV^\ast)$. So it remains to show that $F(u)\in \rL^2(0,a_{\max};\cV^\ast)$.
By \eqref{eq:A2_Fj_bound} in $\bf(A2)$ with $v=0$, we obtain 
\[
\|F(u)\|_{\rL^2(0,a_{\max};\cV^\ast)}^2
\le
C \| F(0) \|_{\cV^\ast}^2
+
C\sum_{j=1}^m
\int_0^{a_{\max}} 
\|u(s)\|^2_{\cV_{\beta_j}}
+
\|u(s)\|_{\cV_{\beta_j}}^{2(\rho_j+1)}
\d s.
\]
Since $\cV\hookrightarrow \cV_{\beta_j}$, we immediately have $ u\in \rL^2(0,a_{\max};\cV_{\beta_j}).$ Moreover, setting \(\theta_j:=2\beta_j-1\), interpolation gives
$$
\|u\|_{\cV_{\beta_j}}
\le C\|u\|_{\cH}^{1-\theta_j}\|u\|_{\cV}^{\theta_j}.
$$
Hence, using \(u\in \rL^\infty(0,a_{\max};\cH)\cap
\rL^2(0,a_{\max};\cV)\) and
\(\theta_j(\rho_j+1)\le1\), which holds thanks to \eqref{eq:A3_coefficients_bound} in \textbf{(A2)}, we obtain
\[
u\in \rL^{2(\rho_j+1)}(0,a_{\max};\cV_{\beta_j}).
\]
Therefore, \(F(u)\in \rL^2(0,a_{\max};\cV^*)\). Consequently,
\[
\sup_{t<a_{\max}} \|u\|_{\rL^2(0,t;\cV)\cap \rH^1(0,t;\cV^\ast)} <\infty.
\]
The blow-up alternative \eqref{eq: blow up} cannot occur at any finite time, and therefore $a_{\max}=T$.
\end{proof}
\noindent 
We now turn to the stochastic data-assimilation problem \eqref{eq:model-data-stoch-u}. The key point is to separate the multiplicative stochastic forcing from the nonlinear dynamics by means of the Da Prato-Debussche decomposition. More precisely, for the reference solution $u$ given by \autoref{prop: global-finite}, we first introduce the stochastic convolution $Z$. Once $Z$ is constructed, the unknown $v$ is written as $v=\hat v+Z$, and the stochastic equation is reduced to a pathwise random PDE for $\hat v$. The latter can then be treated by the same energy method used above for the deterministic problem. For $T>0$, consider
\begin{equation}\label{eq:stoch-conv}
    \left\{
    \begin{aligned}
        \d Z_t + AZ_t\d t &= \mu \,G_{\delta}(u(t))\d W_t^Q,
        \qquad t\in (0,T),\\
        Z(0)&=0.
    \end{aligned}
    \right.
\end{equation}
The next lemma yields existence and maximal-regularity estimates for the stochastic convolution solving \eqref{eq:stoch-conv}. This result is a direct consequence of the stochastic maximal $\rL^2$-regularity of $A$, recalled in \autoref{rem:A-stoch-max-reg}, and which follows from \textup{\bf(A1)}.
\begin{lem}[Existence and regularity of the stochastic convolution]\label{lem:stoch-conv}\phantom{ }\\
Let $T >0$. Then, the auxiliary problem \eqref{eq:stoch-conv} admits a unique mild solution $Z$, given by 
\[
Z(t)=\mu\int_0^t e^{-(t-s)A}G_{\delta}(u(s))\d W_s^Q,
\qquad t\in [0,T],
\]
such that $Z$ is adapted as an $\cH$-valued process and progressively measurable as a $\cV$-valued process. Moreover, there exists a constant $c>0$, independent of $T$, such that
\begin{equation}\label{eq:stoch-max-reg}
    \E \|Z\|_{\rL^2(0,T;\cV)}^2
    + \E \|Z\|_{C([0,T];\cH)}^2
    \leq c  \mu^2 \int_0^T \|G_{\delta}(u(t))\|_{\mathrm{L}_2^0}^2\d t.
\end{equation}
In particular,
\[
Z\in \rC([0,T];\cH)\cap \rL^2(0,T;\cV)
\qquad \mathbb{P}\text{-a.s.}
\]
\end{lem}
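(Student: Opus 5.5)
The plan is to realise $Z$ as the solution of a linear stochastic evolution equation with additive noise. The noise coefficient is the deterministic process $\Phi(t):=\mu G_{\delta}(u(t))$, and the estimate will be read off from the $\rL^2$-stochastic maximal regularity of $A$ recalled in \autoref{rem:A-stoch-max-reg}(ii).

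\textbf{Step 1: the coefficient.} Since $u\in \mathrm{BUC}([0,T];\cH)$ by \autoref{prop: global-finite}, I will check that $t\mapsto G_{\delta}(u(t))$ is a strongly measurable $\rL_2^0$-valued function, which I take as implicit in the standing measurability of $G_\delta$. Being deterministic, it is trivially progressively measurable. By \textbf{(N1)} it belongs to $\rL^2(0,T;\rL_2^0)$. Hence the stochastic integral
\[
\int_0^t e^{-(t-s)A}\Phi(s)\d W_s^Q
\]
is well defined. Here $e^{-tA}$ is the analytic semigroup from \autoref{rem:A-stoch-max-reg}(i), restricted to $\cH$. Note that the part of $A$ in $\cH$ generates a contraction semigroup by coercivity, so each integrand lies in $\rL^2(0,t;\rL_2^0)$ and the integral is an $\cH$-valued Itô integral. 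This gives existence of the mild solution $Z$ by the stated formula. Uniqueness of mild solutions is immediate, since the difference of two solutions solves the homogeneous problem with zero initial datum.

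\textbf{Step 2: maximal regularity.} I will invoke \cite[Theorem 3.13]{Agresti_Veraar_survey}. Because $(\cV^\ast,\cV)_{\frac12,2}=\cH$, the operator $A$ has $\rL^2$-stochastic maximal regularity on $\cV^\ast$ with trace space $\cH$. This means that for progressively measurable $\Phi\in \rL^2(\Omega\times(0,T);\rL_2(\cH_0,\cH))$ the mild solution satisfies
\[
\E\|Z\|_{\rL^2(0,T;\cV)}^2+\E\|Z\|_{\rC([0,T];\cH)}^2\le c\,\E\|\Phi\|_{\rL^2(0,T;\rL_2^0)}^2 .
\]
The same theorem gives that $Z$ is adapted in $\cH$, progressively measurable in $\cV$, and has continuous paths in $\cH$. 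Inserting $\Phi=\mu G_\delta(u)$ yields \eqref{eq:stoch-max-reg}, and finiteness of the expectation gives the almost sure statement.

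\textbf{Main obstacle: $T$-independence of $c$.} Maximal regularity constants on bounded intervals are in general $T$-dependent. Here, however, \textbf{(A1)} gives $0\in\rho(A)$ and exponential stability. I would therefore apply the estimate on the half-line $(0,\infty)$ to $\Phi\mathbf 1_{[0,T]}$, using causality to identify the solutions on $[0,T]$.

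If a self-contained argument is preferred, I would instead use Itô's formula for $\|Z\|_\cH^2$ in the Gelfand triple (Krylov–Rozovskii):
\[
\|Z_t\|_\cH^2+2\alpha\int_0^t\|Z\|_\cV^2\le 2\int_0^t (Z,\Phi\d W^Q)_\cH+\int_0^t\|\Phi\|_{\rL_2^0}^2 .
\]
I would then combine it with the Burkholder–Davis–Gundy inequality and Young's inequality to absorb the martingale term. This gives
\[
\E\sup_{t\le T}\|Z_t\|_\cH^2+\E\int_0^T\|Z\|_\cV^2\le c\int_0^T\|\Phi\|_{\rL_2^0}^2 ,
\]
with $c$ depending only on $\alpha$ and the BDG constant, hence independent of $T$. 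This energy route is the one I would rely on if the quoted theorem's constant is not clearly uniform in $T$. It requires first identifying the mild solution with the variational one, which holds by standard results for linear equations with coercive $A$.
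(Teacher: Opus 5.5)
Your proposal is correct and follows the paper's route. The paper obtains the lemma directly from the $\rL^2$-stochastic maximal regularity of $A$ (Theorem 3.13 of the Agresti--Veraar survey, applicable since $(\cV^\ast,\cV)_{\frac12,2}=\cH$), exactly as in your Step 2. Your extra justification that $c$ is independent of $T$ supplies a detail the paper asserts without comment; it works either via half-line maximal regularity using the exponential stability from \textbf{(A1)}, or via the It\^o--BDG energy estimate, which gives a constant depending only on $\alpha$ and the BDG constant.
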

\noindent
We stress that $Z$ (and the norm of $Z$ in this space) depends on $\delta$ but we have suppressed this in our notation as the precise dependence is not relevant.

We now perform the Da Prato-Debussche decomposition at the level of the original variable $v$. Let $T>0$ and let $Z$ be the solution of \eqref{eq:stoch-conv}. Define
\[
\hat v:=v-Z.
\]
Then $\hat v$ satisfies the random PDE
\begin{equation}\label{eq:pathwise-random-pde}
    \left\{
    \begin{aligned}
        \hat v'(t)+A\hat v(t)
        &= F\bigl(\hat v(t)+Z(t)\bigr)
        -\mu \, \rI_\delta \hat v(t)
        +\mu \, \rI_\delta\bigl(u(t)-Z(t)\bigr),
        \qquad t\in (0,T),\\
        \hat v(0)&=v_0.
    \end{aligned}
    \right.
\end{equation}
This reformulation isolates the random forcing in the path $Z$ and reduces the analysis to a deterministic problem with random coefficients. The next proposition shows that, once this reduction is made, the deterministic well-posedness argument can be adapted pathwise to prove well-posedness of the stochastic data-assimilation system on finite intervals whenever $\mu\delta^2 \lesssim 1$.
\begin{prop}[Well-posedness of the stochastic data-assimilation problem on finite intervals]
\label{prop:DAmult-wp}
Assume that $\rI_\delta$ satisfies \eqref{eq:Idelta-bound}, and let $v_0\in \cH$. Suppose that $\bf(A1)$--$\bf(A3)$ and $\bf(N1)$ hold. Let $C_{\rI}>0$ denote the constant in \eqref{eq:Idelta-bound}, $\alpha>0$ the coercivity constant from $\bf(A1)$ and $\delta_0$ the constant involved in $\bf (N1)$ and \eqref{eq:Idelta-bound}. Set
\[
\eta_0:=\frac{2\alpha}{C_{\rI}^2}.
\]
Then, for every $T>0$, for every $\mu>0$ and every $0<\delta\le \delta_0$ satisfying
\[
\mu\delta^2\le \eta_0,
\]
the stochastic data-assimilation problem \eqref{eq:model-data-stoch-u} admits a unique solution $v$ on $[0,T]$, such that $v$ is adapted as an $\cH$-valued process and progressively measurable as a $\cV$-valued process, and
\[
v-Z \in \rL^2(0,T;\cV)\cap \rH^1(0,T;\cV^\ast)\cap \mathrm{BUC}([0,T];\cH)
\qquad \mathbb{P}\text{-a.s.},
\]
In particular
\[
v\in \rC([0,T];\cH)\cap \rL^2(0,T;\cV)
\qquad \mathbb{P}\text{-a.s.}
\]
\end{prop}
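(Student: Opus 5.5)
We prove \autoref{prop:DAmult-wp} pathwise through the Da Prato--Debussche decomposition $v=\hat v+Z$. Fix $T>0$ and let $Z$ be the stochastic convolution from \autoref{lem:stoch-conv}. There is a full-measure set $\Omega_0$ on which $Z(\omega)\in \rC([0,T];\cH)\cap\rL^2(0,T;\cV)$. For each $\omega\in\Omega_0$ we solve the random PDE \eqref{eq:pathwise-random-pde} deterministically. Its right-hand side is
\[
\tilde F(t,x):=F(x+Z(t))-\mu\rI_\delta x+\mu\rI_\delta(u(t)-Z(t)),
\]
a non-autonomous perturbation of $F$. Since $\rI_\delta$ is bounded from $\cH$ to $\cV^\ast$ by \eqref{eq:Idelta-bound}, the term $\mu\rI_\delta x$ is a Lipschitz perturbation at the level $\cH\hookrightarrow\cV_\beta$. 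Moreover, $\mu\rI_\delta(u-Z)$ lies in $\rL^2(0,T;\cV^\ast)$, and $F(\cdot+Z(t))$ satisfies \eqref{eq:A2_Fj_bound} with $\rho_j$ unchanged and a time-dependent factor $1+\|Z(t)\|_{\cV_{\beta_j}}^{\rho_j}$. This factor is integrable to the required power by interpolation, using $Z\in\rL^\infty(\cH)\cap\rL^2(\cV)$ and \eqref{eq:A3_coefficients_bound}. The auxiliary local well-posedness result for non-autonomous semilinear equations in \autoref{sec:appendix} then gives a unique maximal solution $\hat v$ in $\rL^2(0,a;\cV)\cap\rH^1(0,a;\cV^\ast)\cap\mathrm{BUC}([0,a];\cH)$, together with the blow-up criterion analogous to \eqref{eq: blow up}.

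The core step is the a priori energy estimate, and it is where $\mu\delta^2\le\eta_0$ enters. Testing \eqref{eq:pathwise-random-pde} with $\hat v$ gives
\[
\tfrac12\tfrac{\d}{\d t}\|\hat v\|_\cH^2+\alpha\|\hat v\|_\cV^2\le \langle F(\hat v+Z),\hat v\rangle-\mu\langle \rI_\delta\hat v,\hat v\rangle+\mu\langle\rI_\delta(u-Z),\hat v\rangle.
\]
For the nudging term we write $-\mu\langle\rI_\delta\hat v,\hat v\rangle=-\mu\|\hat v\|_\cH^2+\mu\langle \hat v-\rI_\delta\hat v,\hat v\rangle$. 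Applying \eqref{eq:Idelta-bound} and Young's inequality,
\[
\mu C_\rI\delta\|\hat v\|_\cH\|\hat v\|_\cV\le \tfrac{\mu}{2}\|\hat v\|_\cH^2+\tfrac{\mu C_\rI^2\delta^2}{2}\|\hat v\|_\cV^2\le\tfrac{\mu}{2}\|\hat v\|_\cH^2+\alpha\|\hat v\|_\cV^2 .
\]
This consumes exactly the coercivity permitted by $\eta_0=2\alpha/C_\rI^2$. To keep part of the coercivity for the other terms, we would instead split Young's inequality with a slightly different weight, or note that the leftover $-\tfrac{\mu}{2}\|\hat v\|_\cH^2$ can be discarded. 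Getting this constant bookkeeping to close with some positive multiple of $\|\hat v\|_\cV^2$ left over is the delicate point; I expect it to need the split $\alpha=\tfrac{\alpha}{2}+\tfrac{\alpha}{2}$, with a possibly smaller admissible constant on the other terms. The forcing term $\mu\langle\rI_\delta(u-Z),\hat v\rangle$ is bounded by $\mu(\|u\|_\cH+\|Z\|_\cH)(\|\hat v\|_\cH+C_\rI\delta\|\hat v\|_\cV)$ and absorbed with Young's inequality.

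For the nonlinear term we split $F=\sum_jF_j$. For indices with $\beta_j<\tfrac34$ we apply \textbf{(A3)}-(ii) with $x=\hat v$ and $y=Z(t)$, which produces $\eps_j\|\hat v\|_\cV^2+C(1+\Psi_j(Z))\|\hat v\|_\cH^2+C\Psi_j(Z)$ with $\Psi_j(Z)\in\rL^1(0,T)$. For indices with $\beta_j\ge\tfrac34$ we write
\[
\langle F_j(\hat v+Z),\hat v\rangle=\langle F_j(\hat v+Z),\hat v+Z\rangle-\langle F_j(\hat v+Z),Z\rangle .
\]
The first piece is handled by \textbf{(A3)}-(i). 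The second is controlled through \eqref{eq:A2_Fj_bound} by $\|Z\|_\cV(1+\|\hat v+Z\|_{\cV_{\beta_j}}^{\rho_j+1})$, and the powers are then absorbed by interpolation combined with $(2\beta_j-1)(\rho_j+1)\le1$. We expect this to be the main obstacle: in the critical case $\beta_j=\tfrac34$, $\rho_j=1$, the term $\|Z\|_\cV\|\hat v\|_\cH\|\hat v\|_\cV$ requires Young's inequality leading to $\|Z\|_\cV^2\|\hat v\|_\cH^2$ as a Gronwall weight, and that weight is integrable. Since $\sum_j\eps_j<\alpha/4$, a positive multiple of $\|\hat v\|_\cV^2$ remains, and Gronwall's lemma with an $\rL^1$ weight gives $\hat v\in\rL^\infty(\cH)\cap\rL^2(\cV)$ on $[0,a_{\max})$.

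The $\rH^1(\cV^\ast)$ bound then follows as in \autoref{prop: global-finite}, so blow-up is excluded and $a_{\max}=T$. Uniqueness is pathwise: any two solutions $v$ give $v-Z$ solving the same random PDE, which has a unique solution. For measurability, $\hat v$ is a continuous function of the data $(Z,u)$ via continuous dependence, and solving on $[0,t]$ uses only $Z|_{[0,t]}$; hence $\hat v$, and thus $v=\hat v+Z$, is adapted in $\cH$ and progressively measurable in $\cV$. Finally, $v$ solves \eqref{eq:model-data-stoch-u}, as one checks by adding the equations for $\hat v$ and $Z$.
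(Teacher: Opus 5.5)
Your proposal follows the paper's proof step for step: the Da Prato--Debussche splitting, the appendix lemma with $g_j=\|Z\|_{\cV_{\beta_j}}^{\rho_j}$, the energy estimate split into $\beta_j<\frac34$ and $\beta_j\ge\frac34$, Gronwall, and exclusion of blow-up. The only differences are cosmetic: you keep $-\mu\rI_\delta x$ in the nonlinearity as a $\rho=0$ Lipschitz term instead of putting it into $\widetilde A=A+\mu\rI_\delta$ (this uses $\cV_\beta\hookrightarrow\cH$, not the reverse embedding you wrote), and you argue measurability via causal continuous dependence rather than Picard iteration.

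The bookkeeping you call delicate closes immediately with the weighted Young inequality $\mu C_\rI\delta\|\hat v\|_\cH\|\hat v\|_\cV\le\frac{\alpha}{2}\|\hat v\|_\cV^2+\frac{C_\rI^2\mu^2\delta^2}{2\alpha}\|\hat v\|_\cH^2$. For $\mu\delta^2\le\eta_0$ this leaves the nonpositive coefficient $-\mu\bigl(1-\frac{C_\rI^2\mu\delta^2}{2\alpha}\bigr)$ in front of $\|\hat v\|_\cH^2$ and exactly $\frac{\alpha}{2}$ of coercivity; this is \eqref{eq:coercive-Amudelta}. That suffices, since $\eps_<+2\eps_\ge\le2\sum_j\eps_j<\frac{\alpha}{2}$ leaves room for the small Young parameters, so no constant needs shrinking. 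On $[0,T]$, even a positive constant coefficient of $\|\hat v\|_\cH^2$ could simply be absorbed into the Gronwall weight.
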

\begin{proof}[Proof of \autoref{prop:DAmult-wp}]
We divide the proof in three steps.

\emph{Step $1$: reduction to pathwise problem and local in time existence.} Fix $T>0$. By \autoref{lem:stoch-conv}, the stochastic convolution $Z$ belongs to
\[
\rC([0,T];\cH)\cap \rL^2(0,T;\cV)
\qquad \mathbb{P}\text{-a.s.}
\]
Hence, there exists a set $\Omega_0\in \mathcal F$ with $\mathbb{P}(\Omega_0)=1$ such that, for every $\omega\in \Omega_0$, the path
\[
z:=Z(\cdot,\omega)
\]
belongs to $\rC([0,T];\cH)\cap \rL^2(0,T;\cV)$.
Fix $\omega\in \Omega_0$. Then $\hat v(\cdot,\omega)$ solves the deterministic problem
\begin{equation}\label{eq:pathwise-frozen}
    \left\{
    \begin{aligned}
        y'(t)+Ay(t)
        &=F\bigl(y(t)+z(t)\bigr)-\mu \, \rI_\delta y(t)
        +\mu \, \rI_\delta\bigl(u(t)-z(t)\bigr),
        \qquad t\in (0,T),\\
        y(0)&=v_0.
    \end{aligned}
    \right.
\end{equation}
Equivalently, setting $
\widetilde{A}_{\mu, \delta} := A + \mu \rI_{\delta},$ and $G_z(t,x) := F(x+ z(t)) + \mu \rI_{\delta}( u(t) - z(t)),$ the non-autonomous semilinear problem \eqref{eq:pathwise-frozen} takes the form
\begin{equation}
\left \lbrace
    \begin{aligned}
        y'(t) + \widetilde{A} y(t) &= G_z(t, y(t)), \qquad t \geq 0 \\
        y(0) & = v_0.
    \end{aligned}
    \right. 
\end{equation}
In order to investigate the well-posedness of this problem, we employ \autoref{lemma: new non autonomous lemma local existence}. Hence, we first need to show that its assumptions are fulfilled. First, for the $\rL^2$-maximal regularity, observe that, using \textup{\bf(A1)}, \eqref{eq:Idelta-bound}, and Young's inequality, for every $\xi\in \cV$ we have
\[
\begin{aligned}
  \langle \widetilde{A}\xi,\xi\rangle_{\cV^\ast,\cV}=\langle (A+\mu \, \rI_\delta)\xi,\xi\rangle_{\cV^\ast,\cV}
&=
\langle A\xi,\xi\rangle_{\cV^\ast,\cV}
+\mu \langle \rI_\delta \xi-\xi,\xi\rangle_{\cV^\ast,\cV}
+\mu \|\xi\|_\cH^2 \\
&\ge
\alpha \|\xi\|_\cV^2
-
C_{\rI}\mu\delta \|\xi\|_\cH\|\xi\|_\cV
+
\mu \|\xi\|_\cH^2 \\
&\ge
\frac{\alpha}{2}\|\xi\|_\cV^2
+
\left(\mu-\frac{C_{\rI}^2\mu^2\delta^2}{2\alpha}\right)\|\xi\|_\cH^2.
\end{aligned}
\]
If $\mu\delta^2\le \eta_0=2\alpha/C_{\rI}^2$, then the coefficient of $\|\xi\|_\cH^2$ is nonnegative, and therefore
\begin{equation}\label{eq:coercive-Amudelta}
\langle\widetilde{A} \xi, \xi  \rangle_{\cV^\ast,\cV} = \langle (A+\mu \, \rI_\delta)\xi,\xi\rangle_{\cV^\ast,\cV}
\ge
\frac{\alpha}{2}\|\xi\|_\cV^2,
\qquad \forall\, \xi\in \cV.
\end{equation}
In particular, the operator $A+\mu \, \rI_\delta$ is coercive on $\cV$ and enjoys $\rL^2$-maximal regularity.

Concerning the assumption (2) in \autoref{lemma: new non autonomous lemma local existence}, for a.e. $t \in (0,T)$ and for any $x,y \in \cV$, we have to estimate
$$
\| G_z(t,x) - G_z(t,y) \|_{\cV^\ast } = \| F (x+ z(t)) - F(y+z(t)) \|_{\cV^\ast }.
$$
By \eqref{eq:A2_Fj_bound}, it holds
\begin{equation*}
    \begin{split}
        \| G_z(t,x) - G_z(t,y) \|_{\cV^\ast }& \leq C \sum_{j =1}^m (1 + \| x+ z(t) \|_{\cV_{\beta_j}}^{\rho_j} + \| y + z(t) \|_{\cV_{\beta_j}}^{\rho_j}) \| x- y \|_{\cV_{\beta_j}} \\
        & \leq C \sum_{j =1}^m (1 + \| x \|_{\cV_{\beta_j}}^{\rho_j}+ \| z(t) \|_{\cV_{\beta_j}}^{\rho_j} + \| y\|_{\cV_{\beta_j}}^{\rho_j} + \|z(t) \|_{\cV_{\beta_j}}^{\rho_j}) \| x- y \|_{\cV_{\beta_j}} ,
    \end{split}
\end{equation*}
where in the second inequality we have used that $(a+b)^{\rho_j} \lesssim a^{\rho_j} + b^{\rho_j}.$ If $\rho_j=0$, then the term involving $z$ is just a constant and can be absorbed into the coefficient $1$ in \eqref{eq:nonaut-local-lip}. In other words,
we can choose $g_j\equiv 1\in \rL^\infty(0,T)$. Therefore, assume from now on that $\rho_j>0$. We then choose $g_j(t):=\|z(t)\|_{\cV_{\beta_j}}^{\rho_j},$
and we have to prove that $ g_j\in \rL^{\frac{2(\rho_j+1)}{\rho_j}}(0,T)$. Using \eqref{eq:A3_coefficients_bound}, it holds 
$$\theta_j := 2 \beta_j - 1 \in (0,1).
$$
Since $\cV_{\beta_j} = (\cH , \cV)_{\theta_j,2},$ by interpolation inequality it follows that 
$$\| z(t) \|_{\cV_{\beta_j}} \lesssim \| z(t) \|_{\cH}^{1- \theta_j} \| z(t) \|_{\cV}^{\theta_j}, \quad \text{ and thus } \quad \| z(t) \|_{\cV_{\beta_j}}^{2(\rho_j +1)} \lesssim \| z(t) \|_{\cH}^{2(1-\theta_j)(\rho_j +1)} \| z(t) \|_{\cV} ^{2\theta_j(\rho_j +1)}.
$$
Now, observe that by \eqref{eq:A3_coefficients_bound}, we have 
\begin{equation}
\theta_j ( \rho_j +1) = (2 \beta_j -1) ( \rho_j +1) \leq 1 .    
\label{eq: condition theta_j rho_j}
\end{equation}
Since $z \in \rC([0,T]; \cH) \cap \rL^2(0,T; \cV),$ we can apply H\"older inequality and, thanks to \eqref{eq: condition theta_j rho_j}, we obtain
\begin{equation}
    \int_0^T \| z(t) \|_{\cV_{\beta_j}}^{2(\rho_j+1)} \d t \lesssim  \| z \|_{\rC([0,T]; \cH)}^{2(1-\theta_j)(\rho_j +1)} \int_0^T \| z(t) \|_{\cV}^{2\theta_j(\rho_j +1)} \d t < \infty .
    \label{eq: check g_j integrability}
\end{equation}
Thus $g_j (t) = \| z(t) \|_{\cV_{\beta_j}}^{\rho_j} \in \rL^{\frac{2(\rho_j +1)}{\rho_j}}(0,T)$, and the assumption (2) in \autoref{lemma: new non autonomous lemma local existence} is verified.

We finally check assumption (3) in \autoref{lemma: new non autonomous lemma local existence}. Note that
$$
G_z(t,0) = F(z(t)) + \mu \rI_{\delta}( u(t) - z(t)).
$$
The second term belongs to $\rL^2(0,T; \cV^\ast)$ because $u,z \in \rL^2(0,T; \cV)$ and, thanks to \eqref{eq:Idelta-bound}, $\rI_{\delta} \in \mathcal{L}(\cV, \cV^\ast)$.  For the first term, \eqref{eq:A2_Fj_bound} with $v=0$ gives
$$
\| F(z(t)) \|_{\cV^\ast} \leq C + C\sum_{j =1}^m ( \| z(t) \|_{\cV_{\beta_j}} + \| z(t) \|_{\cV_{\beta_j}}^{\rho_j +1})
$$
By virtue of \eqref{eq: check g_j integrability},
$$
z \in \rL^2(0,T; \cV_{\beta_j}) \cap \rL^{2(\rho_j +1)}(0,T; \cV_{\beta_j}),
$$
and this in turn implies that also assumption (3) in \autoref{lemma: new non autonomous lemma local existence} is verified.

In conclusion, \autoref{lemma: new non autonomous lemma local existence} gives a unique maximal solution
\[
y\in \rL^2(0,a_{\max};\cV)\cap \rH^1(0,a_{\max};\cV^\ast)\cap \mathrm{BUC}([0,a_{\max}];\cH)
\]
on some interval $[0,a_{\max})\subseteq [0,T]$.

\emph{Step $2$: a priori estimate for $y$ and exclusion of blow-up in finite time.} Set
\[
\eta:=y+z.
\]
Testing \eqref{eq:pathwise-frozen} by $y$ and using \eqref{eq:coercive-Amudelta}, we obtain
\[
\frac12  \frac{\d}{\d t} \|y(t)\|_\cH^2 + \frac{\alpha}{2}\|y(t)\|_\cV^2
\le
\langle F(\eta(t)),y(t)\rangle_{\cV^\ast,\cV}
+
\mu \langle \rI_\delta(u(t)-z(t)),y(t)\rangle_{\cV^\ast,\cV}.
\]
\noindent 
We split the nonlinear term according to the interpolation regimes. Set
$$
    J_<: = \{j:\tfrac{1}{2}< \beta_j < \tfrac{3}{4}\},
    \qquad J_\geq : =  \{j : \tfrac{3}{4} \leq \beta_j <1\},
$$
and
$$
    \Psi_<(z) := \sum_{j\in J_<} \Psi_j(z), \qquad
    \eps_<:= \sum_{j \in J_<} \eps_j,  \qquad
    \eps_\geq := \sum_{ j \in J_\geq} \eps_j ,
$$
where $(\eps_j, \Psi_j)$ are given in \textup{\bf{(A3)}}. Since $F=\sum_{j=1}^m F_j$, we estimate separately the contributions with indices in $J_<$ and $J_\geq$.

For $j\in J_<$, applying \eqref{eq:A3shift} with $x=y(t)$ and
$y=z(t)$ gives
$$
    \langle F_j(\eta(t)) , y(t) \rangle_{\cV^\ast,\cV}
    \leq \eps_j \| y(t) \|_\cV^2 + C_j^{(0)}(1+\Psi_j(z(t)))\| y(t)\|_\cH^2
    + C_j^{(1)} \Psi_j ( z ( t ) ) .
$$
Summing over $j\in J_<$, we obtain
\begin{equation}
    \sum_{j\in J_<} \langle F_j( \eta ( t ) ) , y( t ) \rangle_{ \cV^\ast, \cV} \leq \eps_< \| y(t) \|_\cV^2
    +
    C( 1 + \Psi_< ( z ( t ) )) \| y(t) \|_\cH^2
    + C\Psi_<(z(t)).
    \label{eq:low-components-estimate}
\end{equation}
By the integrability assumption in \textup{\bf(A3)}-(ii),
$\Psi_<(z(\cdot)) \in \rL^1(0,T)$.

We now consider the components with $j\in J_\geq$. For each such $j$
$$
    \langle F_j ( \eta ( t ) ) , y( t ) \rangle_{\cV^\ast,\cV}
    = \langle F_j( \eta (t) ) , \eta ( t) \rangle_{\cV^\ast,\cV}
    - \langle F_j( \eta ( t ) ) , z(t) \rangle_{\cV^\ast,\cV}.
$$
By \eqref{eq:A3diag}, applied with $x=\eta(t)$, we have
$$
    \langle F_j( \eta(t)) , \eta(t) \rangle_{\cV^\ast,\cV}
    \leq \eps_j \| \eta(t) \|_\cV^2 + C_j^{(1)} \| \eta(t) \|_\cH^2 + C_j^{(0)} .
$$
Since $\eta(t)=y(t)+z(t)$, we also have
$$
    \| \eta(t)\|_\cV^2 \leq 2\| y( t ) \|_\cV^2 + 2 \| z ( t ) \|_\cV^2, \qquad
    \|\eta(t)\|_\cH^2 \leq 2 \| y(t) \|_\cH^2+ 2\|z(t)\|_\cH^2 .
$$
Therefore,
$$
    \langle F_j(\eta(t)),\eta(t)\rangle_{\cV^\ast,\cV}
    \leq 2\eps_j\|y(t)\|_\cV^2 + C\|y(t)\|_\cH^2 + h_{1,j}(t),
$$
where
$$
    h_{1,j}(t)
    :=
    C(1+\|z(t)\|_\cV^2+\|z(t)\|_\cH^2 ).
$$
Since $z\in \rL^2(0,T;\cV)\cap \rC([0,T];\cH)$, we have
$h_{1,j}\in \rL^1(0,T)$. We now estimate the term
$\langle F_j(\eta(t)),z(t)\rangle_{\cV^\ast,\cV}$. By
\eqref{eq:A2_Fj_bound}, applied with $v=0$, and by absorbing $F_j(0)$ into the
constant,
$$
    \|F_j(\eta(t))\|_{\cV^\ast}
    \leq
    C (1+\|\eta(t)\|_{\cV_{\beta_j}}^{\rho_j+1} ).
$$
Hence
$$
    \vert \langle F_j(\eta(t)),z(t)\rangle_{\cV^\ast,\cV}\vert 
    \leq C( 1 + \| \eta ( t) \|_{\cV_{\beta_j}}^{\rho_j+1}) \|z(t)\|_{\cV}.
$$
Set
$$
    \theta_j:=2\beta_j-1, \qquad r_j:=\rho_j+1.
$$
Since $\beta_j\ge\frac34$, we have $\theta_j\in[\tfrac{1}{2},1)$, and
\eqref{eq:A3_coefficients_bound} gives $\theta_j r_j\leq 1$. By interpolation,
$$
    \|\eta(t)\|_{\cV_{\beta_j}}^{r_j}
    \lesssim \|\eta(t)\|_{\cH}^{(1-\theta_j)r_j} \|\eta(t)\|_{\cV}^{\theta_j r_j}.
$$
Using $\theta_jr_j\le1$ and Young's inequality, for every
$\varepsilon>0$,
$$
    \|\eta(t)\|_{\cV_{\beta_j}}^{r_j}\|z(t)\|_{\cV}
    \leq
    \varepsilon\|\eta(t)\|_{\cV}^2
    +
    C_\varepsilon
    (1+\|\eta(t)\|_{\cH}^2)
    (1+\|z(t)\|_{\cV}^2).
$$
Since $\eta(t)=y(t)+z(t)$, this gives, after reducing $\varepsilon$ if needed,
$$
    \vert \langle F_j(\eta(t)),z(t)\rangle_{\cV^\ast,\cV}\vert 
    \leq \varepsilon\|y(t)\|_{\cV}^2
    + C_\varepsilon (1+\|z(t)\|_{\cV}^2) \| y(t) \|_{\cH}^2
    + h_{2,j,\varepsilon}(t),
$$
where
$$
    h_{2,j,\varepsilon}(t) := C_\varepsilon
    (1+\|z(t)\|_{\cV}^2)
    \, (1+\|z(t)\|_{\cH}^2) \in\rL^1(0,T).
$$ 
Since the number of indices is
finite, summing over $j\in J_\geq$ and reducing $\varepsilon$, if necessary,
gives
$$
    \sum_{j\in J_\geq} \vert \langle F_j(\eta) , z \rangle_{\cV^\ast,\cV}\vert 
    \leq
    \varepsilon\|y\|_\cV^2
    +  C_\varepsilon(1+\|z\|_\cV^2) \| y \|_\cH^2
    +h_{2,\varepsilon}(t),
$$
with \(h_{2,\varepsilon}\in\rL^1(0,T)\).

Combining the estimates for the low and high components, we obtain
\begin{equation*}
    \langle F(\eta(t)) , y(t)\rangle_{\cV^\ast,\cV}
    \leq (\eps_<+2\eps_\geq + \varepsilon )\| y(t)\|_\cV^2
    +a_z(t)\|y(t)\|_\cH^2
    + h_{z,\varepsilon}(t),
\end{equation*}
where \(h_{z,\varepsilon}\in\rL^1(0,T)\) and 
$
    a_z(t)
    =
    C(1+\Psi_<(z(t))+\|z(t)\|_\cV^2).
$

Let $c_{\rm emb}>0$ denote the constant involved in the embedding $\cH\hookrightarrow\cV^\ast$.
Then, by \eqref{eq:Idelta-bound},
\[
\langle \rI_\delta f,g\rangle_{\cV^\ast,\cV}
\le
(C_{\rI}\delta+c_{\rm emb})\|f\|_\cH\|g\|_\cV,
\qquad f\in\cH,\ g\in\cV.
\]
Hence, for every $\varepsilon_2>0$,
\[
\mu \langle \rI_\delta(u-z),y\rangle_{\cV^\ast,\cV}
\le
\varepsilon_2 \|y\|_\cV^2
+
C_{\varepsilon_2,\mu,\delta}\|u-z\|_\cH^2.
\]

We now choose $\varepsilon,\varepsilon_2>0$ such that
$
    \eps_<+2\eps_\ge+\varepsilon+\varepsilon_2<\frac{\alpha}{2}.$ This is possible because
$
    \eps_<+\eps_\ge=\sum_{j=1}^m\eps_j<\frac{\alpha}{4},$ and hence $\eps_<+2\eps_\ge<\frac{\alpha}{2}$. Combining the above estimates,
we arrive at
\[
    \frac12 \frac{\d}{\d t}\|y(t)\|_\cH^2 + c_0\|y(t)\|_\cV^2
    \le a_z(t)\|y(t)\|_\cH^2 + h_z(t),
\]
where $
    c_0>0$, $h_z (t)= h_{z,\varepsilon}(t)
+
C_{\varepsilon_2,\mu,\delta}\|u(t)-z(t)\|_\cH^2 \in \rL^1(0,T) $, and
\[
    a_z(t)
    =
    C\bigl(1+\Psi_<(z(t))+\|z(t)\|_\cV^2\bigr)
    \in\rL^1(0,T).
\]

Hence, by Gronwall's lemma,
\[
y\in \rL^\infty(0,t;\cH)\cap \rL^2(0,t;\cV),
\qquad \forall\, t<a_{\max}.
\]
Now $\eta=y+z$ belongs to $\rL^\infty(0,t;\cH)\cap \rL^2(0,t;\cV)$ for every $t<a_{\max}$. Hence, by $\bf(A2)$ and the standard interpolation argument, we infer that
\[
F(\eta)=F(y+z)\in \rL^2(0,t;\cV^\ast),
\qquad \forall\, t<a_{\max}.
\]
Consequently,
\[
y' = -(A+\mu \, \rI_\delta)y + F(y+z) + \mu \, \rI_\delta(u-z)
\in \rL^2(0,t;\cV^\ast),
\]
and hence
\[
\sup_{t<a_{\max}}
\|y\|_{\rL^2(0,t;\cV)\cap \rH^1(0,t;\cV^\ast)}<\infty.
\]
The blow-up alternative \eqref{eq:blow_up_non_autonomous} therefore cannot occur at finite time, and thus $a_{\max}=T$.

\emph{Step $3$: reconstruction of the stochastic solution and its measurability.} Finally, by a standard Picard-iteration argument in the class of adapted $\cH$-valued and progressively measurable $\cV$-valued processes, the pathwise solution $\hat v$ constructed above admits a version with these measurability properties. Hence the same holds for $v=\hat v+Z$.
\end{proof}

\section{Proofs of the main results}
\label{sec: proofs main results}
\noindent 
We now turn to the main objective of the paper: the synchronisation of the assimilated state with the reference trajectory. The proof of the first main theorem is
based on an It\^{o} energy estimate for the error $w=u-v$ solving \eqref{eq:model-difference-stoch}. Indeed, assumption
\textup{\bf(A4)} controls the nonlinear increment along the reference solution, while the nudging term provides the stabilizing contribution responsible for the decay. This yields exponential convergence in expectation up to the stochastic forcing term.
\begin{proof}[Proof of \autoref{thm:stoch-H}]
Applying It\^o's formula to $\|w_t\|_{\cH}^2$ and using \eqref{eq:model-difference-stoch}, we obtain
\begin{align*}
    \d \|w_t\|_{\cH}^2
    &+ 2\langle Aw_t,w_t\rangle_{\cV^\ast,\cV}\d t \\
    &=
    2\langle F(u(t))-F(v_t),w_t\rangle_{\cV^\ast,\cV}\d t
    -2\mu \langle \rI_\delta w_t,w_t\rangle_{\cV^\ast,\cV}\d t \\
    &\qquad
    + \mu^2 \|G_{\delta}(u(t))\|_{\mathrm{L}_2^0}^2\d t
    -2\mu (w_t,G_{\delta}(u(t))\d W_t^Q)_\cH .
\end{align*}
Taking expectations and using that the martingale term has zero expectation gives
\begin{align*}
    \frac{\d}{\d t}\E\|w_t\|_{\cH}^2
    + 2\E\langle Aw_t,w_t\rangle_{\cV^\ast,\cV}
    &=
    2\E\langle F(u(t))-F(v_t),w_t\rangle_{\cV^\ast,\cV}
    -2\mu \E\langle \rI_\delta w_t,w_t\rangle_{\cV^\ast,\cV} \\
    &\qquad
    + \mu^2 \|G_{\delta}(u(t))\|_{\mathrm{L}_2^0}^2 .
\end{align*}
Since $v_t=u(t)-w_t$, assumption $\bf (A4)$ yields
\[
\langle F(u(t))-F(v_t),w_t\rangle_{\cV^\ast,\cV}
=
\langle F(u(t))-F(u(t)-w_t),w_t\rangle_{\cV^\ast,\cV}
\leq \eps_1 \|w_t\|_{\cV}^2 + \kappa_u(t)\|w_t\|_{\cH}^2.
\]
Moreover, by $\bf(A1)$,
\[
\langle Aw_t,w_t\rangle_{\cV^\ast,\cV}
\geq \alpha \|w_t\|_{\cV}^2.
\]
For the nudging term, using \eqref{eq:Idelta-bound} with $f=w_t$ and $g=w_t$, we obtain
\[
-\langle \rI_\delta w_t,w_t\rangle_{\cV^\ast,\cV}
=
-\|w_t\|_{\cH}^2 + \langle w_t-\rI_\delta w_t,w_t\rangle_{\cV^\ast,\cV}
\leq
-\|w_t\|_{\cH}^2 + C_{\rI}\delta \|w_t\|_{\cH}\|w_t\|_{\cV}.
\]
Hence, by Young's inequality,
\[
-2\mu \langle \rI_\delta w_t,w_t\rangle_{\cV^\ast,\cV}
\leq
-2\mu \|w_t\|_{\cH}^2
+ \eps_2 \|w_t\|_{\cV}^2
+ C_{\eps_2}\mu^2\delta^2 \|w_t\|_{\cH}^2.
\]
Combining the above estimates and choosing $\eps_2>0$ so small that
\[
c_0:=2\alpha-2\eps_1-\eps_2>0,
\]
we arrive at
\begin{equation}\label{eq:pre-gronwall-stoch}
    \frac{\d}{\d t}\E\|w_t\|_{\cH}^2
    + c_0 \E\|w_t\|_{\cV}^2
    \leq
    \Bigl(
    -2\mu + C\mu^2\delta^2 + 2\kappa_u(t)
    \Bigr)\E\|w_t\|_{\cH}^2
    + \mu^2 \|G_{\delta}(u(t))\|_{\mathrm{L}_2^0}^2.
\end{equation}
Dropping the non-negative $\cV$-term and setting
\[
y(t):=\E\|w_t\|_{\cH}^2,
\qquad
\lambda(t):=-2\mu + C\mu^2\delta^2 + 2\kappa_u(t),
\]
we obtain
\[
y'(t)\leq \lambda(t)y(t)+\mu^2 \|G_{\delta}(u(t))\|_{\mathrm{L}_2^0}^2.
\]
By variation of constants,
\[
y(t)\leq e^{\int_0^t \lambda(r)\d r} y(0)
+ \mu^2 \int_0^t e^{\int_s^t \lambda(r)\d r} \|G_{\delta}(u(s))\|_{\mathrm{L}_2^0}^2\d s.
\]
Using \eqref{eq:A4avg}, we infer
\[
\int_s^t \lambda(r)\d r
\leq
-\Bigl( 2\mu -C\mu^2\delta^2 -2M_0\Bigr)(t-s) + 2M_1.
\]
Choose $\mu_0,\eta_0>0$ so that
\[
\gamma:=2\mu -C\mu^2\delta^2 -2M_0 >0
\]
whenever $\mu\ge \mu_0$ and $\mu\delta^2\le \eta_0$. Then
\[
e^{\int_s^t \lambda(r)\d r}\leq C e^{-\gamma (t-s)},
\]
with $C=e^{2M_1}$. Therefore
\[
y(t)
\leq
C e^{-\gamma t} y(0)
+ C\mu^2 \int_0^t e^{-\gamma (t-s)} \|G_{\delta}(u(s))\|_{\mathrm{L}_2^0}^2\d s,
\]
which is exactly \eqref{eq:main-stoch-est}.
\end{proof}
\noindent
\autoref{thm:stoch-H} is the core synchronisation estimate of the paper. The next two corollaries show how this estimate can be refined in two relevant directions: first, by quantifying the asymptotic noise floor under a uniform bound on the noise coefficient, and second, by deriving the corresponding
estimate in the weaker $\cV^\ast$-topology.
\begin{proof}[Proof of \autoref{cor:noise-floor}]
By \eqref{eq:uniform-noise-bound}, we have
\[
\int_0^t e^{-\gamma (t-s)} \|G_{\delta}(u(s))\|_{\mathrm{L}_2^0}^2\d s
\leq
\Gamma_u \int_0^t e^{-\gamma (t-s)}\d s
\leq \gamma^{-1}\Gamma_u.
\]
Inserting this into \eqref{eq:main-stoch-est} yields \eqref{eq:noise-floor-est}, and \eqref{eq:noise-floor-limsup} follows by letting $t\to\infty$. %The additive-noise case is obtained by taking $G\equiv I_\cH$.
\end{proof}

\begin{proof}[Proof of \autoref{cor:stoch-Vstar}]
Since the embedding $\cH\hookrightarrow \cV^\ast$ is continuous, there exists $C>0$ such that
\[
\|x\|_{\cV^\ast}\leq C\|x\|_{\cH},
\qquad \forall\, x\in \cH.
\]
Hence
\[
\E\|w_t\|_{\cV^\ast}^2 \leq C \E\|w_t\|_{\cH}^2,
\]
and the claim follows from \autoref{thm:stoch-H}.
\end{proof}
\noindent
We conclude with two asymptotic refinements of the synchronisation result. The
first one shows that, in the presence of a compact global attractor, the
asymptotic size of the residual error is determined only by the values of the
noise coefficient near the attractor. The second one strengthens the
mean-square convergence to almost sure convergence under an additional
integrability assumption on the stochastic forcing.
\begin{proof}[Proof of \autoref{cor: attractor noise floor}]
Since $\mathcal{A}$ is compact and $G_{\delta}$ is continuous on an open neighborhood
$\mathcal U$ of $\mathcal{A}$, the map $
a \mapsto \|G_{\delta}(a)\|_{\rL_2^0}^2
$ is continuous on $\mathcal{A}$. Hence
$$
\Gamma_{\mathcal A}:=\max_{a\in \mathcal{A}} \| G_{\delta}(a ) \|_{\rL_2^0}^2<\infty.
$$
Moreover, since $\mathcal{A}$ is a compact global attractor and $u(t)$ is a
trajectory of the deterministic semiflow generated by \eqref{eq:model}, we have
$
\mathrm{dist}_{\cH}(u(t),\mathcal A)\to 0$ as $t \to \infty.$ Therefore, for every $\varepsilon>0$, there exists $T_\varepsilon>0$ such that
$$
u(t)\in \mathcal{U} \qquad\text{and}\qquad \|G_{\delta}(u(t))\|_{\rL_2^0}^2\le \Gamma_{\mathcal{A}}+\varepsilon,
\qquad \forall\, t\ge T_\varepsilon.
$$
Now, by \eqref{eq:main-stoch-est}, for every $t\ge T_\varepsilon$,
$$
\E\|w_t\|_{\cH}^2 \leq
C e^{-\gamma t}\|w_0\|_{\cH}^2 + C\mu^2\int_0^t e^{-\gamma(t-s)}\|G_{\delta}(u(s))\|_{\rL_2^0}^2\d s.
$$
Splitting the integral at $T_\varepsilon$, we obtain
$$
\begin{aligned}
\E\|w_t\|_{\cH}^2 \leq&
C e^{-\gamma t}\|w_0\|_{\cH}^2
+ C\mu^2\int_0^{T_\varepsilon} e^{-\gamma(t-s)}\|G_{\delta}(u(s))\|_{\rL_2^0}^2\d s + C\mu^2\int_{T_\varepsilon}^{t} e^{-\gamma(t-s)}(\Gamma_{\mathcal{A}}+\varepsilon)\d s.
\end{aligned}
$$
Since $T_\varepsilon<\infty$ and $G_{\delta}(u(\cdot))\in \rL^2(0,T_\varepsilon;\rL_2^0)$,
the first two terms on the right-hand side converge to $0$ as $t\to\infty$.
For the last term,
$$
\int_{T_\varepsilon}^{t} e^{-\gamma(t-s)}\d s \leq \gamma^{-1},
$$
hence
$$
\limsup_{t\to\infty}\E\|w_t\|_{\cH}^2 \leq  \frac{C\mu^2}{\gamma}(\Gamma_{\mathcal{A}}+\varepsilon).
$$
Since $\varepsilon>0$ is arbitrary, we conclude that
$$
\limsup_{t\to\infty}\E\|w_t\|_{\cH}^2 \leq
 \frac{C\mu^2}{\gamma} \Gamma_{\mathcal{A}}.
$$
\end{proof}

\begin{proof}[Proof of \autoref{thm:as-convergence-multiplicative}]
The idea of the proof is the following. In the first part, we show that the discrete-time second moments and the expected suprema on unit intervals are summable; in the second part, we use Markov's inequality and Borel-Cantelli, to obtain the almost sure convergence.

 \textit{Step 1.} Set
$$
X(t):=\|w_t\|_{\cH}^2, \qquad
h(t) : = \mu^2\| G_{\delta}( u(t))\|_{ \rL_2^0}^2.
$$
By assumption \eqref{eq:extra-integrability-Gutilde} we have
\begin{equation}
h \in \rL^1(0,\infty).
\label{eq:h-L1}
\end{equation}
From \autoref{thm:stoch-H}, with $
y(t):=\mathbb E X(t)=\mathbb E\|w_t\|_{\cH}^2, $
we know that
\begin{equation}
y(t) \leq K e^{-\gamma t}y(0)+K\int_0^t e^{-\gamma(t-s)}h(s)\d s,
\qquad t\ge0.
\label{eq:y-est-for-as}
\end{equation}
Evaluating at integer times and summing over $n\ge0$, we get
\begin{equation}
    \begin{split}
        \sum_{n=0}^ \infty y(n)
& \leq K y(0) \sum_{n=0}^\infty e^{-\gamma n}  +K \sum_{n=0}^\infty \int_0^n e^{-\gamma(n-s)}h(s)\d s  = K y(0) \sum_{n=0}^\infty e^{-\gamma n} +K\int_0^\infty \left(\sum_{n \geq \lceil s\rceil} e^{-\gamma(n-s)}\right) h(s)\d s.
    \end{split}
    \label{eq:sum-y-n-step}
\end{equation}
The geometric sum in parentheses is bounded uniformly in $s$ (since it is a geometric series with ratio $e^{-\gamma} <1$). Hence by \eqref{eq:h-L1},
\begin{equation}
\sum_{n=0}^\infty \mathbb{E} \|w_n\|_{\cH}^2 <\infty.
\label{eq:sum-discrete-L2}
\end{equation}

 \textit{Step 2.} Next, recall from the proof of \autoref{thm:stoch-H} that
\begin{equation}
\d X(t)+c_0 \|w_t\|_{\cV}^2\d t
\leq a(t)X(t)\d t + C h(t)\d t + \d M_t,
\label{eq:ito-as-proof}
\end{equation}
where
$$
a(t):=2\kappa _u(t)+C\mu^2\delta^2-2\mu, \qquad 
M_t:=-\mu\int_0^t \langle w(s),G_{\delta}(  u(s)) \d W_s^Q\rangle_{\cH}.
$$
Fix $n\in\mathbb N$. For $t\in[n,n+1]$, integrating \eqref{eq:ito-as-proof} from $n$ to $t$ and dropping the non-negative second term on the left-hand side, we obtain
$$
X(t) \leq X(n) + C \int_{n}^t a(s) X (s) \d s + C \int_n^t h(s) \d s + M_t - M_n.
$$
For $t \in [n,n+1]$, set
$$
A_n(t) := X(n) + C \int_{n}^t h(s) \d s + \sup_{s \in [n,t]} \vert M_s - M_n \vert.
$$
Then $t \mapsto A_n(t)$ is non-decreasing, and since $a(s) \leq a_+(s):= \max \{ 0, a(s)\}$
$$
X(t) \leq A_n (t) + C \int_n^t a_{+} (s) X(s) \d s, \qquad t \in [n,n+1].
$$
Applying Gronwall Lemma, we obtain
$$
X(t) \leq A_n (t) \exp \left( C \int_{n}^t a_+ (s) \d s \right), \qquad t \in [n,n+1].
$$
Since
$$
a_+(s) \leq 2 \kappa_u(s)  + C \mu^2 \delta^2 + 2\mu,
$$
by \eqref{eq:A4avg} in assumption \textup{(A4)} we have
$$
\int_n^{n+1} a_{+} (s) \d s \leq 2 \int_n^{n+1}\kappa_u (s) \d s  + C \mu^2 \delta^2 + 2\mu \leq 2M_0  + 2M_1  + C \mu^2 \delta^2 + 2\mu ,
$$
where all the constants on the right-hand side are independent of $n$. Therefore, there exists a constant $C_1>0$ independent of $n$, such that taking the supremum over $t \in [n,n+1]$ leads to
\begin{equation}
\sup_{t\in[n,n+1]}X(t)
\leq C_1\left(
X(n)+\int_n^{n+1} h(s)\d s
+\sup_{t\in[n,n+1]} \vert M_t-M_n \vert
\right)
\qquad \mathbb{P}\text{-a.s.}
\label{eq:local-sup-before-bdg}
\end{equation}
Set $S_n:=\sup_{t\in[n,n+1]}X(t).$
We would like to apply Burkholder-Davis-Gundy (BDG) inequality. To do this, we first need to compute the quadratic
variation of the real-valued martingale $M$. Observe that
$$
\langle w(s),G_{\delta}( {u}(s))\d W_s^Q\rangle_{\cH}
= \langle G_{\delta}(  u(s))^* w(s),\d W_s^Q\rangle_{\cH_0},
$$
where $G_{\delta}(  u(s))^* w(s)\in \cH_0$, and in the previous identity we have used that since $G_{\delta}(  u(s))\in \rL_2(\cH_0,\cH)$, then its adjoint
$G_{\delta}(  u(s))^\ast$ maps $\cH$ to $\cH_0$. Hence
$$
M_t = -\mu \int_0^t \langle G_{\delta}( {u}(s))^* w(s),\d W_s^Q\rangle_{\cH_0}
$$
is a continuous real-valued martingale with quadratic variation
$$
\langle M\rangle_t =
\mu^2\int_0^t \|G_{\delta}( {u} (s) )^* w(s)\|_{\cH_0}^2\d s.
$$
Therefore, for $t\in[n,n+1]$,
$$
\langle M-M_n \rangle_t
= \mu^2\int_n^t \|G_{\delta}(  u(s))^* w(s)\|_{\cH_0}^2\d s.
$$
Applying the BDG inequality to the martingale
$(M_t-M_n)_{t\in[n,n+1]}$, we obtain
$$
\mathbb{E}\sup_{t\in[n,n+1]} \vert M_t-M_n \vert\leq C
\mathbb{E}\left(\langle M-M_n\rangle_{n+1}\right)^{1/2} = C\mu
\mathbb{E}\left(
\int_n^{n+1}\|G_{\delta}(  u(s))^* w(s)\|_{\cH_0}^2\d s
\right)^{1/2}.
$$
Taking expectations in \eqref{eq:local-sup-before-bdg} and applying the BDG inequality,
\begin{equation}
\mathbb{E} \sup_{t\in[n,n+1]}  \vert M_t-M_n \vert  \leq C\mu
\mathbb{E}\left(
\int_n^{n+1} \| G_{\delta} ( {u}(s))^\ast  w(s) \|_{\cH_0}^2 \d s
\right)^{1/2} \leq C \mu
\mathbb{E}\left(
S_n^{1/2}
\left(\int_n^{n+1}\|G_{\delta}( {u}(s)) \|_{\rL_2^0}^2\d s\right)^{1/2}
\right).
\label{eq:bdg-step}
\end{equation}
Using Young's inequality, we obtain
\begin{equation}
\mathbb{E}\sup_{t\in[n,n+1]} \vert M_t-M_n \vert 
\leq \frac{1}{2C_1} \mathbb{E} S_n
+ C \mu^2 \int_n^{n+1}\| G_{\delta}( {u}(s) )\|_{ \rL_2^0}^2\d s.
\label{eq:young-bdg-step}
\end{equation}
Combining \eqref{eq:local-sup-before-bdg} and \eqref{eq:young-bdg-step}, and absorbing the term
$\frac{1}{2} \mathbb{E} \left[S_n \right]$ into the left-hand side, gives
\begin{equation}
\mathbb{E} \left[S_n \right] \leq C_2\left ( \mathbb{E}\|w_n\|_{\cH}^2 +\int_n^{n+1} h(s)\d s \right ),  \qquad n \geq 0,
\label{eq:Sn-estimate}
\end{equation}
for some constant $C_2>0$ independent of $n$. Summing \eqref{eq:Sn-estimate} over $n\ge0$, and using \eqref{eq:sum-discrete-L2} together with \eqref{eq:h-L1}, we conclude that $
\sum_{n=0}^\infty \mathbb{E} \left[ S_n \right] < \infty.$

 \textit{Step 3.} Finally, for every $\varepsilon>0$, Markov's inequality gives
$$
\sum_{n=0}^\infty
\mathbb{P} \left(S_n>\varepsilon^2\right) \leq \frac{1}{\varepsilon^2} \sum_{n=0}^\infty \mathbb{E} \left[S_n \right]
<\infty.
$$
Hence, by the Borel-Cantelli lemma,
$$
S_n=\sup_{t\in[n,n+1]}\|w_t\|_{\cH}^2 \to 0 \qquad \mathbb{P}\text{-a.s.}
$$
This implies $w_t\to0$ in $\cH$, almost surely, as $t\to\infty$. Lastly, to prove the uniform convergence of the tails in \eqref{eq: uniform convergence of the tail} observe that
$$
\sup_{t \geq N} \| w_t \|_{\cH} = \sup_{n \geq N} \sup_{t \in [n, n+1]} \| w_t \|_{\cH} = \sup_{n \geq N} S_n^{1/2}.
$$
Thus \eqref{eq: uniform convergence of the tail} follows from the fact that $S_n \to 0 .$
\end{proof}

\appendix

\section{A local existence result for non-autonomous semilinear equations}
\label{sec:appendix}
\noindent 
This appendix contains a non-autonomous local well-posedness result used in the proof of \autoref{prop:DAmult-wp}. The result is a variant of the autonomous local theory recalled in \autoref{lem: local}, adapted to semilinear equations allowing the local Lipschitz constants to depend on time. We include the proof for completeness, since the random PDE \eqref{eq:pathwise-random-pde} falls exactly in this framework.

\begin{lem}\label{lemma: new non autonomous lemma local existence}
Let $T>0$. Assume $\widetilde{A} \in \mathcal{L}(\cV, \cV^\ast)$ and that $\widetilde{A}$ enjoys $\rL^2$-maximal regularity on $(0,T).$ Let $ \widetilde  G:(0,T) \times \cV \to \cV^\ast $ and assume the following.
\begin{enumerate}
    \item[(1)] Fix $x \in \cV$. The map $t \mapsto \widetilde  G(t,x)$ is strongly-measurable as a map with values in $\cV^*$.
    \item[(2)] For every $j=1,\ldots,m$, let $\beta_j\in(\frac12,1)$,
$\rho_j\geq0$, and assume that $(\beta_j,\rho_j)_j$ satisfy
\eqref{eq:A3_coefficients_bound}, i.e. $(2\beta_j -1) ( \rho_j +1) \leq 1$. Set
$$
r_j:=
\begin{cases}
\dfrac{2(\rho_j+1)}{\rho_j}, & \rho_j>0,\\
\infty, & \rho_j=0.
\end{cases}
$$
There exists a measurable function $g_j\in \rL^{r_j}(0,T)$ such that, for a.e.
$t\in(0,T)$ and every $x,y\in\cV$
    \begin{equation}
 \|  \widetilde  G(t,x) -  \widetilde G(t,y) \|_{\cV^\ast } \leq C \sum_{j = 1}^m ( 1+ \vert g_j (t)  \vert + \| x \|_{\cV_{\beta_j}}^{\rho_j} + \| y \|_{\cV_{\beta_j}}^{\rho_j} ) \| x-y \|_{\cV_{\beta_j}}.
        \label{eq:nonaut-local-lip}
    \end{equation}
    \item[(3)] $ \widetilde  G(\cdot, 0) \in \rL^2(0,T; \cV^\ast )$.
    \end{enumerate}
    Then, for any $u_0 \in \cH$, there exists $a = a(u_0) \in (0,T]$ such that
    \begin{equation}
    \left \lbrace 
        \begin{aligned}
            u'(t) + \widetilde{A} u(t) &= \widetilde  G(t,u(t)), \qquad t \in (0,a),\\
            u(0) & = u_0,
        \end{aligned}
        \right. 
        \label{eq: nonautonomous problem tilde a}
    \end{equation}
    admits a unique solution 
    $$
    u \in \rL^2(0,a; \cV) \cap \rH^1(0, a; \cV^\ast) \cap \mathrm{BUC}([0,a]; \cH).
    $$
    Moreover, the solution can be extended to a maximal interval of existence $[0, a_{\max}(u_0)) \subseteq [0,T]$. If $a_{\max}(u_0) < T$, then
    \begin{equation}
    \lim \limits_{t \to a_{\max}(u_0)} \| u \|_{\rL^2(0,t; \cV) \cap \rH^1(0,t; \cV^\ast )} = + \infty.
        \label{eq:blow_up_non_autonomous}
    \end{equation}
\end{lem}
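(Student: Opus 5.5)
We prove the lemma by a Banach fixed-point argument in the maximal regularity space, following the scheme behind the autonomous result in \autoref{lem: local} (\cite[Theorem 18.2.6]{Veraar3}). We keep track of the extra time-dependent coefficients $g_j$.

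For $a\in(0,T]$ set $\mathbb{E}_a:=\rL^2(0,a;\cV)\cap \rH^1(0,a;\cV^\ast)$. Recall the trace embedding $\mathbb{E}_a\hookrightarrow \mathrm{BUC}([0,a];\cH)$, which holds because $(\cV^\ast,\cV)_{\frac12,2}=\cH$. Its constant is uniform in $a$ on the subspace $\mathbb{E}_a^0$ of functions with zero initial value. Let $u^\ast\in\mathbb{E}_T$ solve the linear problem $u^{\ast\prime}+\widetilde A u^\ast=\widetilde G(\cdot,0)$ with $u^\ast(0)=u_0$. This solution exists by the $\rL^2$-maximal regularity of $\widetilde A$ and by assumption (3). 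We look for $u=u^\ast+\phi$ with $\phi\in\mathbb{E}_a^0$, where $\phi$ is a fixed point of
\[
\Phi(\phi):=\mathcal{R}\bigl(\widetilde G(\cdot,u^\ast+\phi)-\widetilde G(\cdot,0)\bigr).
\]
Here $\mathcal{R}:\rL^2(0,a;\cV^\ast)\to\mathbb{E}^0_a$ is the zero-initial-value solution operator. Its norm is bounded uniformly in $a\le T$, since it is the restriction of the operator on $(0,T)$ applied to extensions by zero. Measurability of $t\mapsto \widetilde G(t,u(t))$ follows from (1) together with the continuity in $x$ given by (2), by approximating $u$ with simple functions.

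The key estimate is the nonlinear one, carried out for each $j$. Set $\theta_j=2\beta_j-1$. Interpolation gives
\[
\|x\|_{\cV_{\beta_j}}\lesssim\|x\|_\cH^{1-\theta_j}\|x\|_\cV^{\theta_j},
\]
and since $\theta_j(\rho_j+1)\le1$, every $x\in\mathbb{E}_a$ lies in $\rL^{2(\rho_j+1)}(0,a;\cV_{\beta_j})$. This mixed-derivative embedding is the same one used in the proofs of \autoref{prop: global-finite} and \autoref{prop:DAmult-wp}. We then apply H\"older's inequality with exponents $\tfrac{2(\rho_j+1)}{\rho_j}$ and $2(\rho_j+1)$, whose reciprocals sum to $\tfrac12$. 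For $\rho_j=0$ the weight is in $\rL^\infty$. This yields
\[
\|\widetilde G(\cdot,x)-\widetilde G(\cdot,y)\|_{\rL^2(0,a;\cV^\ast)}\le C\sum_j\Bigl(a^{\frac{\rho_j}{2(\rho_j+1)}}+\|g_j\|_{\rL^{r_j}(0,a)}+\|x\|^{\rho_j}_{\rL^{2(\rho_j+1)}(0,a;\cV_{\beta_j})}+\|y\|^{\rho_j}_{\rL^{2(\rho_j+1)}(0,a;\cV_{\beta_j})}\Bigr)\|x-y\|_{\rL^{2(\rho_j+1)}(0,a;\cV_{\beta_j})}.
\]

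The main obstacle is the critical case $\theta_j(\rho_j+1)=1$. There the embedding $\mathbb{E}^0_a\hookrightarrow\rL^{2(\rho_j+1)}(0,a;\cV_{\beta_j})$ does not gain a power of $a$. We therefore obtain smallness from absolute continuity of the integrals instead, not from the embedding constant. The quantities $\|g_j\|_{\rL^{r_j}(0,a)}$ and $\|u^\ast\|_{\rL^{2(\rho_j+1)}(0,a;\cV_{\beta_j})}$ both tend to $0$ as $a\to0$, since $r_j<\infty$ whenever $\rho_j>0$. The constant term $a^{\rho_j/(2(\rho_j+1))}$ also vanishes, and in the case $\rho_j=0$ the coefficient $1+|g_j|$ is bounded. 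In that case we use $\|x-y\|_{\rL^2(0,a;\cV_{\beta_j})}\lesssim a^{\frac{1-\theta_j}{2}}\|x-y\|_{\mathbb{E}^0_a}$, which follows from the uniform $\rL^\infty(\cH)$ bound and H\"older in time. The difference estimates are made on $\mathbb{E}^0_a$, where the constants are uniform in $a$. We then choose a radius $r$ and a time $a$ both small enough that $\Phi$ maps the ball $\{\|\phi\|_{\mathbb{E}^0_a}\le r\}$ into itself and is a contraction there. This gives existence and uniqueness in the ball. Uniqueness in all of $\mathbb{E}_a$ follows by applying the same estimate to the difference of two solutions on a short interval and continuing, since the difference has zero initial value.

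Finally, we obtain the maximal solution by restarting at $u(a)\in\cH$, which is legitimate because every hypothesis restricts to subintervals. Gluing the local solutions gives $[0,a_{\max})$. For the blow-up alternative, suppose that $a_{\max}<T$ and that $\|u\|_{\mathbb{E}_t}$ stays bounded as $t\to a_{\max}$. Then the estimate above shows that $\widetilde G(\cdot,u)\in\rL^2(0,a_{\max};\cV^\ast)$. Maximal regularity then gives $u\in\mathbb{E}_{a_{\max}}$, so $u(a_{\max})\in\cH$ exists by the trace embedding. Restarting from $u(a_{\max})$ extends the solution beyond $a_{\max}$, a contradiction, which proves \eqref{eq:blow_up_non_autonomous}.
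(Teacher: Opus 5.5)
Your proposal is correct and follows essentially the same route as the paper. Both arguments run a contraction for the perturbation of the linear solution with forcing $\widetilde G(\cdot,0)$ (your $\phi=u-u^\ast$, the paper's $w=u-z$) in a ball of the zero-initial-value maximal regularity space, using the interpolation bound $\|u\|_{\rL^{q_j}(0,a;\cV_{\beta_j})}\lesssim a^{\sigma_j}\|u\|_{\mathbb E_a}$ and H\"older with $\frac1{r_j}+\frac1{q_j}=\frac12$; the critical case $\theta_j(\rho_j+1)=1$ is handled by absolute continuity, choosing the radius first and then $a$, and uniqueness and maximality follow by continuation and gluing. Your extra remarks on the measurability of $t\mapsto\widetilde G(t,u(t))$ and on the blow-up alternative supply details that the paper only calls classical.
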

\begin{proof}
For $a\in (0,T]$ set $\mathbb{E}_a:= \rL^2(0,a;\cV) \cap\rH^1(0,a;\cV^\ast)$ and $ \mathbb{F}_a:=\rL^2(0,a;\cV^\ast)$. On $\mathbb{E}_a$, we consider the maximal regularity norm $
\| u \|_{\mathbb{E}_a} := \| u \|_{\rL^2(0,a; \cV)} + \| u' \|_{\rL^2(0,a; \cV^\ast)}.$ Since $\widetilde{A}$ enjoys $\rL^2$-maximal regularity on finite intervals, for
every $a\in (0,T]$, $f\in \mathbb F_a$, and $u_0\in \cH$, the linear problem
\begin{equation*}
    \left \lbrace
    \begin{aligned}
        u'(t)+\widetilde{A}u(t) &= f(t), \qquad t\in (0,a),\\
        u(0)&=u_0,
    \end{aligned}
    \right.
\end{equation*}
admits a unique solution $u \in \mathbb{E}_a$. Moreover, there exists a constant $M_T>0$,
independent of $a\in(0,T]$, such that
\begin{equation}\label{eq:MR-nonaut-proof}
\|u\|_{\mathbb{E}_a} \leq M_T (\| u_0 \|_{\cH}+ \| f \|_{\mathbb{F}_a} ).
\end{equation}

For $j=1,\dots,m$, define
$$
\theta_j := 2\beta_j-1 \in (0,1), \qquad q_j:=2(\rho_j+1), \qquad
r_j : =  \begin{cases}
    \frac{2(\rho_j+1)}{\rho_j}, \; &\text{ if } \rho_j >0,\\ +\infty, \; &\text{ if } \rho_j = 0,
\end{cases}\qquad \sigma_j:= \frac{1- \theta_j (\rho_j +1)}{2(\rho_j +1)} \geq 0.
$$
Throughout the proof, when $r_j=\infty$ we use the conventions $\frac{1}{r_j}=0$ and $a^{\frac{1}{r_j}}=1$. Since $\cV_{\beta_j}=(\cH,\cV)_{\theta_j,2}$, interpolation inequality gives
$$
\|u(t)\|_{ \cV_{ \beta_j}}\leq C\|u(t)\|_{\cH}^{1-\theta_j}\|u(t)\|_{\cV}^{ \theta_j } \qquad \text{for a.e. } t \in (0,a).
$$
Using that \eqref{eq:A3_coefficients_bound} is equivalent to $\theta_j(\rho_j+1)\leq 1$, we obtain by interpolation and H\"older inequality
$$
\|u\|_{\rL^{q_j}(0,a;\cV_{\beta_j})}^{q_j}\leq C \|u\|_{\rC([0,a];\cH)}^{(1-\theta_j)q_j}
\int_0^a \|u(t)\|_\cV^{\theta_j q_j} \d t \leq C \|u\|_{\rC([0,a];\cH)}^{ (1- \theta_j) q_j}
a^{1- \frac{\theta_j q_j}{2}}
\|u\|_{\rL^2(0,a;\cV)}^{ \theta_j q_j}.
$$
Since $q_j = 2(\rho_j +1)$, we have $1- \frac{\theta_j q_j }{2} = 1- \theta_j (\rho_j +1)$. Considering the $q_j$th square root of the previous chain of inequalities, we obtain
$$
\| u \|_{\rL^{q_j} (0,a ; \cV_{\beta_j} )} \leq C a^{\sigma_j} \| u \|_{ \rC ([0,a]; \cH )}^{1- \theta_j} \| u \|_{\rL^2 (0,a; \cV)}^{\theta_j}.
$$
Since $\mathbb{E}_a \hookrightarrow \rC([0,a]; \cH)$, we conclude that
\begin{equation}
\|u\|_{\rL^{q_j}(0,a;\cV_{\beta_j})}
\le
C_T a^{\sigma_j}(\|u(0) \|_{\cH} + \| u \|_{\mathbb{E}_a})^{1-\theta_j}\|u\|_{\mathbb E_a}^{\theta_j}.
    \label{eq:embedding-Ej_nonzero_ic}
\end{equation}
In particular, if $u \in \mathbb{E}_a$ and $u(0) = 0$,
\begin{equation}\label{eq:embedding-Ej}
\|u\|_{\rL^{q_j}(0,a;\cV_{\beta_j})}
\le
C_T a^{\sigma_j}\|u\|_{\mathbb E_a}.
\end{equation}

\emph{Step 1: construction of the fixed point map.}
Let $z \in \mathbb{E}_T$ be the unique solution of
$$
z'(t)+\widetilde{A} z(t)= \widetilde  G(t,0), \qquad t\in (0,T),\qquad z(0) = u_0.
$$
For $a\in(0,T]$, set $z_a:=z|_{(0,a)}$. By virtue of \eqref{eq:MR-nonaut-proof} and assumption $(3)$, we have
$$
\| z \|_{\mathbb{E}_T}  \leq M_T ( \| u_0 \|_{\cH} + \| \widetilde  G ( \cdot, 0)\|_{\mathbb{F}_T}).
$$
Moreover, for any $j=1,...,m$
\begin{equation}\label{eq:za-small}
\| z_a \|_{\rL^{q_j} (0,a;\cV_{\beta_j})} = \| z \|_{\rL^{q_j} (0,a;\cV_{\beta_j})} \to 0 \qquad \text{as } a \to 0.
\end{equation}

Let $
\mathbb{E}_{a,0} := \{ w \in \mathbb{E}_a : w(0)=0 \}.$ Fix $R>0$ and define $
\mathbb{B}_{a,R} := \{ w \in \mathbb{E}_{a,0} : \| w \|_{\mathbb{E}_a} \leq R \}.$ 
Define
$$
\Psi_a: \mathbb{B}_{a,R} \to \mathbb{E}_{a,0}, \qquad \Psi_a (w):= v,
$$
where $v$ is the unique solution of
\begin{equation*}
\left \lbrace 
    \begin{aligned}
        v'(t)+ \widetilde{A} v(t) &= \widetilde  G (t, z_a(t) + w(t) )-  \widetilde  G(t,0),
\qquad t \in (0,a),\\
 v(0)&=0.
    \end{aligned}
    \right.
\end{equation*}
Note that, by the maximal regularity theory recalled above, $\Psi_a$ is well defined, if we are able to show that the right-hand side belongs to $\mathbb{F}_a$. To show this, set $ \phi_j ( t ) : = \| z_a(t) + w(t) \|_{\cV_{\beta_j}}.$ By \eqref{eq:embedding-Ej_nonzero_ic}, $\phi_j\in \rL^{q_j}(0,a)$. Moreover, by \eqref{eq:nonaut-local-lip} with $y=0$
$$
\|  \widetilde  G( t, z_a + w)- \widetilde  G( t, 0)\|_{\cV^\ast} \leq C \sum_{j=1}^m
\left[(1+|g_j(t)|)\phi_j(t)+\phi_j(t)^{\rho_j+1}\right].
$$
Since $\frac{1}{q_j}+ \frac{1}{r_j} = \frac{1}{2},$ H\"older's inequality gives
$$
\| ( 1 + \vert g_j \vert) \phi_j\|_{\rL^2(0,a )} \leq
( a^{\frac{1}{r_j}} + \| g_j \|_{\rL^{r_j}(0,a )}) \| \phi_j \|_{\rL^{q_j}(0,a)}.
$$
Furthermore, since $q_j=2(\rho_j+1)$, it holds $ \| \phi_j^{\rho_j+1} \|_{ \rL^2(0,a )} = \| \phi_j \|_{\rL^{q_j}(0,a )}^{\rho_j+1}.$
Hence each term on the right-hand side belongs to $\rL^2(0,a)$.

\emph{Step $2$: $\Psi_a$ maps $\mathbb{B}_{a,R}$ in itself.} Applying the maximal regularity estimate in \eqref{eq:MR-nonaut-proof} to $\Psi_a(w),$ and the estimates in Step $1$, we have
\begin{equation}
    \begin{split}
        \| \Psi_a (w) \|_{\mathbb{E}_a} \leq M_T \|  \widetilde  G( \cdot, z_a +w) - \widetilde   G(\cdot, 0) \|_{\mathbb{F}_a} \leq C_T \sum_{j =1}^m &\Big[(a^{\frac{1}{r_j}}+ \| g_j \|_{\rL^{r_j}(0,a)}) \| z_a + w \|_{\rL^{q_j}(0,a; \cV_{\beta_j})} \\
        &\quad + \| z_a + w \|_{\rL^{q_j}(0,a; \cV_{\beta_j})}^{\rho_j +1}\Big].
    \end{split}
    \label{eq: bound norm Psi a w}
\end{equation}
Using the triangle inequality, \eqref{eq:embedding-Ej}, and the fact that $w \in \mathbb{B}_{a,R}$, we have
\begin{equation*}
    \begin{split}
        \| z_a + w \|_{\rL^{q_j}(0, a; \cV_{\beta_j})} &\leq \| z_a \|_{\rL^{q_j}(0,a; \cV_{\beta_j})} + \| w \|_{\rL^{q_j}(0,a; \cV_{\beta_j})} \leq \| z_a \|_{\rL^{q_j}(0,a; \cV_{\beta_j})} + C_T a^{\sigma_j} \| w \|_{\mathbb{E}_a} \\
        &\leq \| z_a \|_{\rL^{q_j}(0,a; \cV_{\beta_j})} + C_T a^{\sigma_j} R=: K_j(a,R).
    \end{split}
\end{equation*}
Hence, \eqref{eq: bound norm Psi a w} can be rewritten as
$$
\| \Psi_a (w) \|_{\mathbb{E}_a} \leq C_T \sum_{j = 1}^m \left[ (a^{\frac{1}{r_j} } + \| g_j \|_{\rL^{r_j}(0,a)} )K_j (a,R) + K_j (a,R)^{\rho_j +1}  \right].
$$
It remains to be shown that the right-hand side can be made smaller than $R$ for a suitable choice of $a,R$.

If $\sigma_j >0,$ then $K_j(a,R) \to 0$ as $a \to 0$ thanks to \eqref{eq:za-small}.

If $\sigma_j = 0,$  then $\theta_j (\rho_j +1) = 1.$ Since $\theta_j \in (0,1),$ this implies $\rho_j >0$ and thus $r_j <\infty .$ Thus
$$
a^{\frac{1}{r_j}} + \| g_j \|_{\rL^{r_j}(0,a )} \to 0 \qquad \text{ as } a \to 0,
$$
Furthermore, $K_j(a,R) \to C_T R$ as $a \to 0$. These imply two facts. First, the terms $(a^{\frac{1}{r_j} } + \| g_j \|_{\rL^{r_j}(0,a)} )K_j (a,R)$ can be made arbitrarily small. Second, for the remaining terms we have $K_j(a,R)^{\rho_j+1} \sim R^{\rho_j +1}$. In conclusion, we can find $R,a$ such that $\|\Psi_a (w) \|_{\mathbb{E}_a} \leq R$.

\emph{Step $3$: $\Psi_a$ is a contraction.} Let $v_i = \Psi_a (w_i)$, for $i = 1,2$ and $w_i \in \mathbb{B}_{a,R}$. Then $v:=  v_1 -v_2 \in \mathbb{E}_{a}$ satisfies
\begin{equation*}
\left \lbrace 
    \begin{split}
        v'(t) + \widetilde{A} v(t) & = \widetilde  G(t, z_a(t) + w_1(t)) -  \widetilde  G(t, z_a(t) + w_2(t)), \qquad t \in (0,a), \\
        v(0) & = 0.
    \end{split}
    \right. 
\end{equation*}
By maximal regularity
$$
\| \Psi_a (w_1) - \Psi_a (w_2) \|_{\mathbb{E}_a} \leq M_T \| \widetilde  G (\cdot, z_a + w_1) - \widetilde  G(\cdot, z_a + w_2) \|_{\mathbb{F}_a}.
$$
We now estimate the norm on the right-hand side. By \eqref{eq:nonaut-local-lip}, we have
$$
\| \widetilde  G( \cdot, z_a + w_1) - \widetilde  G(\cdot, z_a + w_2) \|_{\cV^\ast } \leq C \sum_{j = 1}^m (1+ \vert g_j \vert+ \| z_a + w_1 \|_{\cV_{\beta_j}}^{\rho_j} + \| z_a + w_2 \|_{\cV_{\beta_j}}^{\rho_j} )\| w_1- w_2 \|_{\cV_{\beta_j}}.
$$
Considering the $\rL^2(0,a)$ norm, we estimate the three types of terms separately. First, using H\"older inequality with $\frac{1}{r_j}+ \frac{1}{q_j} = \frac{1}{2},$ we have
$$
\| w_1 - w_2 \|_{\rL^2(0,a; \cV_{\beta_j})} \leq a^{\frac{1}{r_j}} \| w_1 - w_2 \|_{\rL^{q_j}(0,a; \cV_{\beta_j})},
$$
where, if $\rho_j  = 0$, the previous coefficients are $q_j = 2$, $r_j = \infty $, and $a^{\frac{1}{r_j}} = 1$. Second, again by H\"older inequality with $\frac{1}{r_j}+ \frac{1}{q_j} = \frac{1}{2}$, it holds
$$
\| |g_j| \|w_1-w_2\|_{\cV_{\beta_j}}\|_{\rL^2(0,a)}
\leq \| g_j \|_{\rL^{r_j}(0,a)} \|w_1-w_2\|_{\rL^{q_j}(0,a;\cV_{\beta_j})}.
$$
For the non-linear terms, if $\rho_j >0$, then $\rho_j r_j = q_j$. Again thanks to H\"older inequality with the same exponents, we get to
\begin{equation*}
    \begin{split}
        \left\| \, \| z_a + w_i \|_{\cV_{\beta_j}}^{\rho_j} \| w_1- w_2 \|_{\cV_{\beta_j}}\, \right\|_{\rL^2(0,a)} \leq \| z_a + w_i \|_{\rL^{q_j}(0,a; \cV_{\beta_j})}^{\rho_j} \| w_1-w_2 \|_{\rL^{q_j}(0,a; \cV_{\beta_j})}.
    \end{split}
\end{equation*}
If $\rho_j = 0$, then the previous term simplifies to just
$
\| w_1 - w_2 \|_{\rL^2(0,a; \cV_{\beta_j})}  = \| w_1 - w_2 \|_{\rL^{q_j}(0,a; \cV_{\beta_j})}.$ In conclusion, with the convention $K_j(a,R) ^0 = 1$, we arrive at
\begin{equation*}
    \begin{split}
         \| \widetilde  G( \cdot, z_a + w_1) - \widetilde  G(\cdot, z_a + w_2) \|_{\mathbb{F}_a } \leq C \sum_{j =1}^m &\Big( a^{\frac{1}{r_j}} + \| g_j \|_{\rL^{r_j}(0,a)} + \| z_a +w_1 \|_{\rL^{q_j}(0,a; \cV_{\beta_j})}^{\rho_j}  \\
        & \quad + \| z_a + w_2 \|_{\rL^{q_j}(0,a; \cV_{\beta_j})}^{\rho_j} \Big) \| w_1 - w_2 \|_{\rL^{q_j}(0,a; \cV_{\beta_j})}.
    \end{split}
\end{equation*}
Using \eqref{eq:embedding-Ej} to bound the norms $\| \cdot \|_{\rL^{q_j}(0,a; \cV_{\beta_j})}$ and using that $\| w_i \|_{\mathbb{E}_a} \leq R,$ we obtain
$$
\|  \Psi_a (w_1 ) - \Psi_a(w_2) \|_{\mathbb{E}_a} \leq C_T\left[ \sum_{j = 1}^ma^{\sigma_j} \left( a^{\frac{1}{r_j}} + \| g_j \|_{\rL^{r_j} (0, a)} + 2K_j (a,R)^{\rho_j} \right) \right] \| w_1 - w_2 \|_{\mathbb{E}_a}.
$$
We now choose $R>0$ and $a>0$ so that the previous Lipschitz constant is strictly smaller than $1$. Indeed, if $\sigma_j>0$, then the corresponding contribution tends to zero as $a\to0$. This also covers the endpoint
$\rho_j=0$, because then $\sigma_j=\frac{1-\theta_j}{2}>0$, while
$r_j=\infty$ and $K_j(a,R)^0=1$. If instead $\sigma_j=0$, then $\theta_j(\rho_j+1)=1$, and therefore $\rho_j>0$ and $r_j<\infty$. Hence
$$
a^{\frac{1}{r_j}}+\|g_j\|_{\rL^{r_j}(0,a)}\to0
\qquad \text{as } a\to0,
$$
and $K_j(a,R)^{\rho_j}\to (C_T R)^{\rho_j}$. Choosing first $R>0$ small enough and then $a>0$ small enough, the full Lipschitz constant is strictly smaller than $1$. 

By the contraction mapping principle, there exists a unique fixed point of $w \in \mathbb{B}_{a,R}$ for $\Psi_a.$ Then, $u := z_a +w \in \mathbb{E}_a$ solves the original problem \eqref{eq: nonautonomous problem tilde a} on $(0,a)$.

\emph{Step $4$: uniqueness in $\mathbb{E}_a$ and maximal interval of existence.} Regarding the uniqueness in $\mathbb{E}_a,$ let $u_i \in \mathbb{E}_a$, for $i =1,2$, be two solutions of \eqref{eq: nonautonomous problem tilde a} with the same initial condition $u_0$. Hence, for $s,t\in (0,a]$, with $s<t$, their difference $v:= u_1 - u_2$ solves
\begin{equation*}
\left \lbrace 
    \begin{split}
        v'(r) + \widetilde{A} v (r) &= \widetilde  G(r, u_1(r)) - \widetilde  G(r, u_2(r)), \qquad r \in (s,t), \\
        v(s)& = u_1(s) -u_2(s).
    \end{split}
    \right.
\end{equation*}
Repeating the contraction estimate of Step $3$ on the arbitrary time interval $(s,t) \subset (0,a)$, with $u_i$ replacing $z_a + w_i$, and assuming that $u_1(s) = u_2(s)$, it is possible to check that
$$
\| u_1 - u_2 \|_{\mathbb{E}_{s,t}} \leq C_T  L(s,t) \| u_1 - u_2 \|_{\mathbb{E}_{s,t}},
$$
where
$$
L(s,t):= \sum_{j = 1}^m (t-s)^{\sigma_j} \left[ (t-s)^{\frac{1}{r_j}} + \| g_j \|_{\rL^{r_j}(s,t)} + \| u_1\|_{\rL^{q_j}(s,t;V_{\beta_j})}^{\rho_j} +\| u_2\|_{\rL^{q_j}(s,t;V_{\beta_j})}^{\rho_j} \right],
$$
and $ \mathbb{E}_{s,t}:= \rL^2(s,t; \cV) \cap \rH^1(s,t; \cV^\ast).$ 

We now check that $L(s,t)\to 0$ as $t \to s$. Indeed, if $\sigma_j >0$, then the factor $(t-s)^{\sigma_j}$ gives smallness of $L(s,t)$. If $\sigma_j =0$, then, as in Step $3$, we have $\rho_j >0$ and $r_j <\infty.$ So, the smallness follows from the absolute continuity of the norms in $\rL^{r_j}$ and $\rL^{q_j}$.
Hence, for any $s < a$, such that $u_1 = u_2 \in \cH$ on $[0, s]$, there exists $t>s$ such that $C_T L(s,t) <1$, and therefore $u_1 = u_2$ also on $[s,t].$ 
Let $\tilde a := \inf \{ t \in [0,a] \, : \, u_1(t) \neq u_2(t) \}.$ Since $u_1, u_2$ are continuous on $[0,a]$ with values in $\cH$ we must have $u_1(\tilde{a}) = u_2(\tilde{a}) \in \cH$.
The previous argument shows that $u_1 = u_2$ also on $[\tilde{a},\tilde{a} + h]$ for sufficiently small $h > 0$ which is a contradiction to the definition of $\tilde{a}$, and we have proved uniqueness on the whole interval $[0,a].$

Lastly, the existence of a maximal solution follows from a classical argument based on considering $a_{\max} (u_0):= \sup \{ \tau \in (0,T] : \text{ a solution exists on } (0,\tau) \}$ and then gluing together (thanks to uniqueness) the local in time solutions constructed with the fixed point argument.
\end{proof}

{\bf Acknowledgements. }{\small Gianmarco Del Sarto, Matthias Hieber and Tarek Z\"{o}chling acknowledge the support from the DFG project FOR~5528. The research of Filippo Palma is carried out under the auspices of GNFM-INdAM. Part of the research was carried out during a visit of Jochen Br\"{o}cker to Darmstadt, and JB acknowledges generous support and hospitality during this visit.}

{\bf Data availability statement. }{\small Data sharing not applicable to this article as no datasets were
generated or analyzed during the current study.}

{\bf Conflict of interest. }{\small  On behalf of all the authors, the corresponding author states
that there is no Conflict of interest.}

{\bf Author contributions. }{\small All authors contributed to the conception of the work. They also were involved in drafting or revising the article critically for intellectual content. All approved the final version.}

%\bibliographystyle{plain}
%\bibliography{biblio}

\end{document}